\def\.{\cdot}
\def\d{{\mathrm d}}
\def\la{\langle}
\def\ra{\rangle}
\def\l{\lambda}
\def\beq{\begin{equation}}
\def\eeq{\end{equation}}
\def\bea{\begin{eqnarray*}}
\def\eea{\end{eqnarray*}}
\def\beaa{\begin{eqnarray}}
\def\eeaa{\end{eqnarray}}
\def\ba{\begin{array}}
\def\ea{\end{array}}
\def\f{\varphi}
\def\o{\omega}
\def \RM{\mathbb{R}}
\def \CM{\mathbb{C}}
\newcommand{\ct}{\nabla^\tau}
\def\Ric{\mathrm{Ric}}
\def\id{\mathrm{id}}
\def\be{\begin{equation}}
\def\ee{\end{equation}}
\def\Hol{\mathrm{Hol}}
\def\Sym{\mathrm{Sym}}
\def\hol{\mathfrak{hol}}
\def\so{\mathfrak{so}}
\def\su{\mathfrak{su}}
\def\sp{\mathfrak{sp}}
\def\gg{\mathfrak{g}}
\def\ll{\mathfrak{l}}
\def\hh{\mathfrak{h}}
\def\k{\mathfrak{k}}
\def\z{\mathfrak{z}}
\def\mm{\mathfrak{m}}
\def\u{\mathfrak{u}}
\def\v{\mathfrak{v}}
\def\SU{\mathrm{SU}}
\def\U{\mathrm{U}}
\def\A{\mathrm{A}}
\def\C{\mathrm{C}}
\def\G{\mathrm{G}}
\def\H{\mathcal{H}}
\def\R{\mathrm{R}}
\def\SO{\mathrm{SO}}
\def\End{\mathrm{End}}
\def\Sp{\mathrm{Sp}}
\def\Spin{\mathrm{Spin}}
\def\Ker{\mathrm{Ker}}
\def\ad{\mathrm{ad}}
\def\Sym{\mathrm{Sym}}
\def\Id{\mathrm{id}}
\def\P{\mathrm{P}}
\def\T{\mathrm{\,T}}
\def\lr{\,\lrcorner\,}
\def\dd{\mathrm{d}}
\def\V{\mathcal{V}}
\def\LieD{{\mathcal L}}
\def\Ad{\mathrm{Ad}\,}
\newtheorem{epr}{Proposition}[section]
\newtheorem{ath}[epr]{Theorem}
\newtheorem{elem}[epr]{Lemma}
\newtheorem{ecor}[epr]{Corollary}
\theoremstyle{definition}
\newtheorem{ede}[epr]{Definition}
\newtheorem{ere}[epr]{Remark}
\newtheorem{exe}[epr]{Example}
\title{Metric connections with parallel skew-symmetric torsion}
\author{Richard Cleyton, Andrei Moroianu, Uwe Semmelmann}
\address{Richard Cleyton, Ribe Katedralskole, Puggårdsgade 22, 6760 Ribe, Denmark}
\email{rc@ribekatedralskole.dk}
\address{Andrei Moroianu, Université Paris-Saclay, CNRS,  Laboratoire de mathématiques d'Orsay, 91405, Orsay, France}
\email{andrei.moroianu@math.cnrs.fr}
\address{Uwe Semmelmann, Institut f\"ur Geometrie und Topologie, Fachbereich Mathematik, Universit{\"a}t Stuttgart, Pfaffenwaldring 57, 70569 Stuttgart, Germany
}
\email{uwe.semmelmann@mathematik.uni-stuttgart.de}
\date{\today}
\begin{document}

\begin{abstract}
A geometry with parallel skew-symmetric torsion is a Riemannian manifold carrying a metric connection with parallel skew-symmetric torsion. Besides the trivial case of the Levi-Civita connection, geometries with non-vanishing parallel skew-symmetric torsion arise naturally in several geometric contexts, e.g. on naturally reductive homogeneous spaces, nearly Kähler or nearly parallel $\mathrm{G}_2$-manifolds, Sasakian and $3$-Sasakian manifolds, or twistor spaces over quaternion-Kähler manifolds with positive scalar curvature.
In this paper we study the local structure of Riemannian manifolds carrying a metric connection with parallel skew-symmetric torsion. On every such manifold one can define a natural splitting of the tangent bundle which gives rise to a Riemannian submersion over a geometry with parallel skew-symmetric torsion of smaller dimension endowed with some extra structure. We show how previously known examples of geometries with parallel skew-symmetric torsion fit into this pattern, and construct several new examples. In the particular case where the above Riemannian submersion has the structure of a principal bundle, we give the complete local classification of the corresponding geometries with parallel skew-symmetric torsion.
\end{abstract}

\subjclass[2010]{53B05, 53C25}
\keywords{Parallel skew-symmetric torsion, quaternion-Kähler structures, Sasakian structures, 3-Sasakian structures, naturally reductive homogeneous spaces}
\maketitle

\section{Introduction}

\subsection{Motivation}

Metric connections with torsion on Riemannian manifolds have been 
studied recently in many articles. Such connections usually  arise in special geometric situations and then 
come with further properties. Besides the (torsion free) Levi-Civita connection, which is, for obvious reasons, the central object in Riemannian geometry, the next most natural class to consider is the  one of metric connections 
with totally skew-symmetric and parallel torsion. These connections have the same 
geodesics as the Levi-Civita connection. Moreover their curvature tensor is still pair symmetric and
satisfies the second Bianchi identity. There are several important geometries admitting
metric connections with parallel skew-symmetric torsion as we will now explain.

\subsection{Examples}
The first example are the naturally reductive spaces with their canonical homogeneous
connection which turns out to have parallel skew-symmetric torsion and also parallel curvature (see \cite{kn2}). 
Another important classes of examples are Sasakian and $3$-Sasakian manifolds \cite{friedrich2} and more generally 3-$(\alpha,\delta)$-Sasakian manifolds \cite{ad}. 
In even dimensions, nearly K\"ahler manifolds with their canonical Hermitian connection
provide another class of examples (see \cite{andrei}). Finally, in dimension 7, every manifold with a nearly parallel $\G_2$-structure carries a canonical connection with parallel skew-symmetric  torsion 
(see \cite{friedrich2}). We will discuss these examples in greater detail in §\ref{examples}.

\subsection{Previous results}
Although not directly related to our topic, let us first mention that the possible holonomy groups of arbitrary torsion-free connections (not necessarily metric) have been classified, under the irreducibility assumption, by Merkulov and Schwachh\"ofer \cite{merkulov}. 

The classification of metric connections with parallel torsion and {\em irreducible} holonomy representation was obtained by Cleyton and Swann in \cite{c-swann}.  They show that a Riemannian manifold admitting a metric 
connection with non-trivial parallel torsion is locally isometric to a non-symmetric isotropy irreducible 
homogeneous space, or to one of the irreducible symmetric spaces $(G\times G)/G$ or $G^\C/G$, 
or it is nearly  K\"ahler in dimension 6, or has a nearly parallel $\G_2$-structure in dimension 7. The homogeneous spaces in the first
case are naturally reductive if the torsion is assumed to be skew-symmetric. For the other three
cases the torsion is automatically skew-symmetric. 

The reducible case turns out to be much more involved, and it is the purpose of the present article to
describe a classification scheme in the case of connections with parallel skew-symmetric torsion whose holonomy representation is reducible.

Further classification results only occur in in special geometric situations or in low dimensions:
Alexandrov, Friedrich and Schoemann \cite{alex1} have shown that if the canonical Hermitian connection of a Hermitian
manifold has parallel torsion and holonomy in $\Sp(n)\U(1)$ then the manifold is locally isomorphic 
to a twistor space over a quaternion-K\"ahler manifold of positive scalar curvature.
Partial classifications of $6$-dimensional almost Hermitian manifolds admitting a canonical Hermitian
connection with parallel torsion are obtained by Alexandrov \cite{alex2} and Schoemann \cite{schoeman}. Similarly,
cocalibrated $\G_2$-manifolds whose characteristic connection has parallel torsion are studied
by Friedrich \cite{friedrich1}.  Moreover, Agricola, Ferreira and Friedrich \cite{agricola} obtained classification results in low dimensions, up  to $6$. 

Finally, we would like to mention the recent work of Storm \cite{storm2}, \cite{storm1} and that of Agricola and Dileo
\cite{ad}. In his thesis, Storm describes a new construction method for naturally reductive spaces  and gives classification results in dimensions 7 and 8. He also has a  general result on metric connections  with skew-symmetric and "reducible" parallel torsion (see Thm. 1.3.5 in \cite{storm1}), similar to 
our Lemma \ref{cr} below. Agricola and Dileo introduce in \cite{ad} a new class of almost $3$-contact metric manifolds
called $3$-$(\alpha, \delta)$-Sasakian manifolds (including as special cases $3$-Sasakian manifolds and quaternionic
Heisenberg groups). They show that these spaces admit a canonical metric connection with skew-symmetric
and parallel torsion (\cite[Thm. 4.4.1]{ad}). 

Several of  the examples mentioned above  are total spaces of Riemannian submersions
over manifolds without torsion, e.g. Sasakian manifolds locally fiber over K\"ahler manifolds, $3$-Sasakian
manifolds locally fiber over quaternion-K\"ahler manifolds and the twistor spaces are $S^2$-fibrations 
over quaternion-K\"ahler manifolds. We will see that this is a general phenomenon which characterizes connections with parallel skew-symmetric torsion with reducible holonomy.

\subsection{Overview}

We now turn to the main results contained in this paper. Let $\nabla^\tau$ be a metric connection with parallel skew-symmetric torsion on a Riemannian manifold $(M,g^M)$. As already mentioned, the case where the holonomy representation of $\nabla^\tau$ is irreducible has been dealt with in \cite{c-swann}, so one can always assume that the holonomy representation is reducible. In contrast to the Riemannian (torsion free) situation, this by no means gives a reduction of the manifold as a Riemannian product (unless the geometry is decomposable, see Definition \ref{ri}), since the de Rham decomposition theorem does not hold for connections with torsion. 

Our first achievement is to show that among all possible parallel distributions of the tangent bundle, there is a particular one which we denote with  $\V M$ and which has the remarkable property that its leaves are totally geodesic, and define locally a Riemannian submersion $M\to N$, which we call the standard submersion. Even more strikingly, the restriction of the curvature of $\tau$ to the leaves of this distribution is $\nabla^\tau$-parallel, so each leaf is a locally homogeneous space by the Ambrose-Singer theorem \cite{ambrose}.

The next step is to show that the fibration $M\to N$ can be obtained as the quotient of a principal bundle over $N$ carrying a connection with {\em parallel curvature} by a subgroup of its structure group. This is achieved as follows. One shows that the holonomy bundle $Q$ of the restriction $\nabla^\tau|_{\V M}$, with structure group $K:=\Hol(\nabla^\tau|_{\V M})$ can be viewed as a principal bundle $P$ over the base $N$ of the standard submersion, with a larger structure group $G$ containing $K$, and such that $G/K$ is a locally homogeneous space isomorphic to the fibers of the standard submersion. This fact can be interpreted as a Ambrose-Singer-like theorem for families, and reduces to the usual Ambrose-Singer theorem when the base $N$ is a point. 

The proof, explained in §\ref{gsts}, is based on the following idea: since $\V M$ is a sub-bundle of the tangent bundle of $M$, the principal bundle $Q$ has a soldering-like form $\theta$, which has the same $K$-equivariance property as the connection form $\alpha$ of $\nabla^\tau|_{\V M}$. The sum $\gamma:=\alpha+\theta$ turns out to define a connection form on $Q$ viewed now as a principal bundle $P\to N$, with larger structure group and smaller basis. In this way, the total spaces of $Q\to M$ and $P\to N$ are the same, but the vertical distribution of $P\to N$ is now the direct sum of  the vertical distribution of $Q\to M$ and of the horizontal lift to $Q$ of the distribution $\V M$. Moreover the curvature of the connection $\gamma$, viewed as a $2$-form on $N$ with values in the adjoint bundle $\ad(P)$ turns out to be parallel and satisfies some further algebraic conditions.
The structure $(P,N,\gamma)$ is called a {\em geometry with parallel curvature} (see Definition \ref{pgwt}). Conversely, this geometry with parallel curvature over $N$ contains enough information in order to recover the whole structure of $M$, as shown in §\ref{inverse}.

In §\ref{gtst} we introduce an important particular case of geometries with parallel curvature, called parallel $\gg$-structures, which correspond to geometries with parallel skew-symmetric torsion whose vertical distribution $\V M$ is spanned by $\nabla^\tau$-parallel vector fields. 

In §\ref{para} we describe several construction methods and examples of parallel $\gg$-structures, and in §\ref{se8} we give the local  classification of manifolds with parallel $\gg$-structures (Theorem \ref{pgs}), which is an important step towards the classification of geometries with parallel skew-symmetric  torsion.

Finally, in the appendix we discuss the 3-$(\alpha,\delta)$-Sasakian structures studied by Agricola and Dileo \cite{ad} in the framework of parallel $\gg$-structures and of geometries with parallel curvature. We explain why the geometries with parallel torsion of special type (i.e. with parallel vertical distribution) only occur when $\delta=2\alpha$, and recover the general case, where $\delta\ne 2\alpha$, including the standard 3-Sasakian situation, by means of geometries with parallel curvature.

 {\sc Acknowledgments.} This work was completed during two ``Research in Pairs'' stays at the MFO, Germany and at the CIRM, France. We warmly thank both institutes for the excellent research conditions offered. We would also like to thank the anonymous referee for several suggestions of improvement and Ilka Agricola for comments on our work and in particular for pointing out recent research related to our results.

\section{Preliminaries}

\subsection{Connections with parallel skew-symmetric torsion}
Let $(M,g)$ be a Riemannian manifold. We will identify as usual vectors and 1-forms or skew-symmetric endomorphisms and 2-forms via the metric $g$. 

In the sequel, if $A$ is a skew-symmetric endomorphism of $\T M$, we will denote by $A\cdot$ the action of $A$ on exterior forms as derivation, given by 
\be\label{adot}A\cdot\omega:=\sum_iAe_i\wedge e_i\lr\omega,\qquad\forall\ \omega\in\Lambda^*\T M\ ,\ee
 in every local orthonormal basis $\{e_i\}$ of $\T M$. Note that if $B$ is another skew-symmetric endomorphism, identified with a 2-form via the metric, then 
$A\cdot B$ is the 2-form corresponding to the commutator of $A$ and $B$, whence:
  \be\label{adotb}A\cdot B=[A,B]=-B\cdot A\ .\ee

Every $3$-form $\tau$ on $M$ can be identified with a tensor of type $(2,1)$ by writing for every $x\in M$
 $$\tau(X,Y,Z) = g(\tau_XY, Z),\qquad\forall\ X,Y,Z\in \T_x M\ .$$ 
In this way, the 2-form $X\lr\tau$ is identified with the skew-symmetric endomorphism $\tau_X$ of $\T_xM$ for every tangent vector $X\in \T_xM$. 

\begin{ede}\label{gwt}
A {\em geometry with parallel skew-symmetric torsion} on $M$ is a Riemannian metric $g$ with Levi-Civita connection $\nabla^g$ and a 3-form $\tau\in\Omega^3(M)$ which is parallel with respect to the metric connection $\nabla^\tau:=\nabla^g+\tau$, i.e. $\nabla^\tau\tau=0$. 
\end{ede}
Of course, since $\tau$ is a $3$-form, $\nabla^\tau$ has skew-symmetric torsion $T^\tau=2\tau$.

Writing $\nabla^g= \nabla^\tau - \tau $ and using the fact that $\tau$ is skew-symmetric and $\nabla^\tau$-parallel, we readily see that the curvature $R^\tau$ satisfies
\begin{equation}\label{rgt}
R^g  = R^\tau  + \tau^2
\qquad \mbox{with}\qquad
(\tau^2)_{X,Y}Z = [\tau_X, \tau_Y]Z - 2 \tau_{\tau_XY}Z \ .
\end{equation}
Taking the scalar product with a vector $W$ in this formula we obtain 

\begin{elem}\label{symm}
Let $\nabla^\tau = \nabla^g + \tau$ be a connection with parallel skew-symmetric torsion $\tau$.
Then
$$
R^g(X,Y,Z,W)=
R^\tau(X,Y,Z,W)  - g(\tau_YZ,\tau_XW) + g(\tau_XZ,\tau_YW) - 2g(\tau_XY, \tau_ZW) \.
$$
In particular the curvature $R^\tau$ is pair symmetric: $R^\tau(X,Y,Z,W)=R^\tau(Z,W,X,Y)$.
\end{elem}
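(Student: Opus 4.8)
The plan is to obtain the formula by taking the scalar product of \eqref{rgt} with a vector $W$ and then rewriting the three contributions of $\tau^2$ using only the skew-symmetry of each $\tau_X$ as an endomorphism of $\T_xM$ and the total skew-symmetry of $\tau$ as a $3$-form. Concretely, $g((\tau^2)_{X,Y}Z,W)$ equals $g([\tau_X,\tau_Y]Z,W)-g(\tau_{\tau_XY}Z,W)+g(\tau_{\tau_YX}Z,W)$. In the first term I would move $\tau_X$ and $\tau_Y$ across the metric, each move producing a sign, to get $-g(\tau_YZ,\tau_XW)+g(\tau_XZ,\tau_YW)$. For the last two terms I would use $g(\tau_AB,C)=\tau(A,B,C)$ together with the fact that $\tau$ is a $3$-form to cyclically permute: $g(\tau_{\tau_XY}Z,W)=\tau(\tau_XY,Z,W)=\tau(Z,W,\tau_XY)=g(\tau_ZW,\tau_XY)$ and, similarly, $g(\tau_{\tau_YX}Z,W)=-g(\tau_ZW,\tau_XY)$, so these two terms together contribute $-2g(\tau_XY,\tau_ZW)$. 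Collecting everything and plugging back into \eqref{rgt} gives precisely the asserted identity.

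For the pair symmetry of $R^\tau$ I would use that $R^g$ is pair symmetric by the classical symmetries of the Levi-Civita curvature, so that it is enough to verify that the correction term
$$C(X,Y,Z,W):=-g(\tau_YZ,\tau_XW)+g(\tau_XZ,\tau_YW)-2g(\tau_XY,\tau_ZW)$$
satisfies $C(X,Y,Z,W)=C(Z,W,X,Y)$. The third summand is manifestly symmetric under $(X,Y)\leftrightarrow(Z,W)$. For the other two I would note that, writing $g(\tau_AB,\tau_CD)=\sum_i\tau(A,B,e_i)\,\tau(C,D,e_i)$ in a local orthonormal basis $\{e_i\}$, this quantity is symmetric in the pairs $(A,B)$ and $(C,D)$ and changes sign under a transposition inside either pair; hence $g(\tau_YZ,\tau_XW)=g(\tau_WX,\tau_ZY)$ and $g(\tau_XZ,\tau_YW)=g(\tau_WY,\tau_ZX)$, which is exactly what is needed to conclude that $C$, and therefore $R^\tau$, is pair symmetric. (Together with the tautological antisymmetry of $R^\tau$ in $(X,Y)$ and the antisymmetry in $(Z,W)$ coming from metricity of $\nabla^\tau$, this yields all the usual algebraic symmetries.)

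I do not expect a genuine obstacle here: the whole argument is a short bookkeeping computation. The only point that requires attention is to keep the two standard identifications straight at each step — a skew-symmetric endomorphism versus the associated $2$-form on the one hand, and the $3$-form $\tau$ versus the $(2,1)$-tensor $X\mapsto\tau_X$ on the other — and to track the signs produced by each use of skew-symmetry.
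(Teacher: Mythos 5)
Your proposal is correct and follows essentially the same route as the paper: the displayed formula is obtained exactly by taking the scalar product of \eqref{rgt} with $W$ and rewriting the three $\tau^2$-terms via the skew-symmetry of $\tau_X$ and the total skew-symmetry of $\tau$, and the pair symmetry of $R^\tau$ then follows, as you argue, from the pair symmetry of $R^g$ together with the manifest pair symmetry of the torsion correction terms. The paper merely leaves this bookkeeping implicit, so your write-up simply supplies the details it omits.
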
 

Taking the cyclic sum over $X,Y,Z$ in the previous relation and using the Bianchi identity for $R^g$ yields at once:
\begin{ecor} For every $X,Y,Z,W\in \T M$, one has 
\be\label{bia}\mathop{\mathfrak{S}}_{XYZ} \left(R^\tau(X,Y,Z,W)- 4g(\tau_XY, \tau_ZW)\right)=0\ .
\ee
\end{ecor}

\subsection{Examples}\label{examples}

As mentioned in the introduction, there are several known families of geometries with parallel skew-symmetric torsion. We review here the most important ones.

\begin{exe}\label{nr}
A homogeneous space $M=G/K$ is called reductive if the Lie algebra $\gg$ of $G$ admits an
$\Ad(K)$-invariant splitting $\gg = \k  \oplus \mm$, where $\mm$ can be identified with the tangent
space to $M$ in the origin $o$. The canonical homogeneous connection 
on the $K$-principal bundle $G\rightarrow G/K$ is defined as the projection onto the Lie algebra $\k$,
i.e. its connection $1$-form $\alpha \in \Omega^1(G,\k)$ is given by $\alpha(X) = X_\k$ for any vector $X\in \gg$.
The connection $\alpha$ induces the canonical homogeneous connection on the tangent bundle of $M$.
Its torsion is given by  $T(X,Y)_o= - [X,Y]_\mm$ for vectors $X,Y\in \mm$. A reductive homogeneous space $M$  
equipped with a $G$-invariant metric $g$ corresponding to a $\Ad(K)$-invariant scalar product
$\la \cdot, \cdot \ra$ on $\mm$ is called {\it naturally reductive} if the torsion of the canonical
homogeneous connection is skew-symmetric, i.e. if
$
\la [X, Y]_\mm, Z\ra + \la Y, [X, Z]_\mm \ra = 0
$
holds for all vectors $X,Y,Z \in \mm$. It is well-known that the canonical homogeneous 
connection has parallel torsion (see \cite{kn2}, Ch. X, Thm. 2.6). 
In this situation not only the torsion but also the curvature is parallel. Conversely, the theorem of Ambrose 
and Singer \cite{ambrose} states that if a metric connection on a Riemannian manifold has parallel skew-symmetric torsion and parallel curvature, then the manifold is locally homogeneous and naturally reductive.
There are many examples of naturally reductive spaces, e.g.  all homogeneous spaces $G/K$ of a 
compact semi-simple Lie group $G$ equipped with the metric induced by the Killing form of $G$.
\end{exe}

\begin{exe}\label{nk} 
Nearly K\"ahler manifolds are almost Hermitian manifolds $(M, g, J)$ where the almost
complex structure $J$ satisfies the condition $(\nabla_X J) X= 0$ for all tangent vectors
$X$. The canonical Hermitian connection $\bar\nabla$ with $\bar \nabla J = 0 = \bar \nabla g$
is defined by $\bar \nabla_XY = \nabla^g_XY - \tfrac12 J(\nabla^g_XJ)Y$. Hence the  nearly K\"ahler 
condition directly implies that the canonical Hermitian connection has skew-symmetric torsion. The torsion is 
also $\bar\nabla$-parallel as was first shown  in \cite{kiri} (see \cite{andrei} for a short proof).
Important examples of nearly K\"ahler manifolds in any dimension $4k+2$ are provided by the twistor spaces of quaternion-K\"ahler
manifolds of positive scalar curvature. Another class of examples are the naturally reductive $3$-symmetric 
spaces (see \cite{gray1}, Prop. 5.6). In \cite{nagy} it is proved that any
strict nearly K\"ahler manifold is locally isometric to a product with factors either $6$-dimensional,  
or homogeneous of a certain type, or a twistor space of a quaternion-K\"ahler manifold of positive
scalar curvature. 
In dimension $6$, Butruille proved that the only homogeneous examples are the naturally reductive 3-symmetric spaces
$S^6, S^3\times S^3, \CM P^3$ and the flag manifold $F(1,2)$ (see \cite{but}). The first non-homogeneous 6-dimensional examples were constructed very recently by Foscolo and Haskins on $S^6$ and $S^3\times S^3$ (see \cite{fh}).
\end{exe}

\begin{exe}\label{sas} 
A {\it Sasakian} structure on a  Riemannian manifold $(M,g)$ is given by a unit length
Killing vector field $\xi$ satisfying the condition 
$
\nabla^g_X \d \xi= - 2 X  \wedge  \xi
$ 
for all tangent vectors $X$. In this situation $\bar\nabla = \nabla^g + \tfrac 12 \xi \wedge \d\xi$
defines a metric connection  with skew-symmetric torsion preserving the Sasakian structure. 
It is easy to show that its torsion $T = \xi \wedge \d\xi$ is $\bar\nabla$-parallel (see \cite{friedrich2}).
Every Sasakian manifold is locally isometric to the total space of an $S^1$-fibre bundle over a
K\"ahler manifold, endowed with the metric given by the Kaluza-Klein construction. 
\end{exe}

\begin{exe}\label{3sas}
A {\it  $3$-Sasakian} manifold is a Riemannian manifold with $3$ unit length Killing vector fields 
satisfying the $\so(3)$-commutator relations, and such that each vector field defines  a 
Sasakian structure. More generally, by rescaling the metric with two positive parameters in the direction of the distribution spanned by the 3 vector fields and of its orthogonal complement, one obtains the so-called 3-$(\alpha,\delta)$-Sasakian manifolds \cite{ad}. Every  $3$-$(\alpha,\delta)$-Sasakian manifold is locally isometric to the
total space of an $\SO(3)$-bundle over a quaternion-K\"ahler manifold of positive
scalar curvature, endowed with a Konishi-type metric \cite{konishi} (see the appendix for details).
\end{exe}

\begin{exe}\label{g2} 
{\it Nearly parallel $\G_2$-manifolds} are Riemannian 7-manifolds carrying a 3-form $\omega$ whose stabilizer at each point is isomorphic to the group $\G_2$ and such that $\d\omega=\l\ast \omega$ for some non-zero real number $\l$. On such manifolds, the
metric connection $\bar\nabla:=\nabla^g+\frac1{12}\l\o$ has parallel 
skew-symmetric torsion \cite{friedrich2}. There are several examples of
homogeneous nearly parallel $\G_2$-manifolds, e.g.  $\SO(5)/\SO(3)$, where the embedding of
$\SO(3)$ into $\SO(5)$ is given by the $5$-dimensional irreducible representation of $\SO(3)$,
or the Aloff-Wallach spaces $\SU(3)/\U(1)_{k,l}$. 
Moreover every $7$-dimensional $3$-Sasakian manifold carries a second Einstein
metric defined by a nearly parallel $\G_2$-structure (see \cite{fiau}).
\end{exe}

\section{The standard decomposition}\label{4}

In contrast to the Riemannian case, there are two different notions of reducibility for geometries with parallel skew-symmetric torsion, which we will explain now. 

\begin{ede}\label{ri} A geometry with parallel skew-symmetric torsion $(M,g,\tau)$ is called:
\begin{itemize}
\item {\em reducible} if the holonomy representation of $\nabla^\tau$ is reducible, and {\em irreducible} otherwise.
\item {\em decomposable} if the tangent bundle of $M$ decomposes in a (non-trivial) orthogonal direct sum of $\nabla^\tau$-parallel distributions $\T M=T_1\oplus T_2$ such that the torsion form satisfies $\tau=\tau_1+\tau_2\in\Lambda^3 T_1\oplus\Lambda^3 T_2$, and {\em indecomposable} otherwise.
\end{itemize}
\end{ede}
 Of course, irreducibility implies indecomposability, but as we will see, there are many examples of indecomposable but reducible geometries with parallel skew-symmetric torsion. 
 
 If $(M_1,g_1,\tau_1)$ and $(M_2,g_2,\tau_2)$ are geometries with parallel skew-symmetric torsion, then their Riemannian product $(M_1\times M_2,g_1+g_2,\tau_1+\tau_2)$ is again a decomposable geometry with parallel skew-symmetric torsion. Conversely, 
 the next result shows that a decomposable geometry with parallel skew-symmetric torsion is always locally a Riemannian product:

\begin{elem}\label{cr} Assume that $(M,g,\tau)$ is decomposable, with $\T M=T_1\oplus T_2$ and $\tau=\tau_1+\tau_2\in\Lambda^3 T_1\oplus\Lambda^3 T_2$. Then $(M,g,\tau)$ is locally isometric to a product of two manifolds with parallel skew-symmetric torsion $(M_i,g_i,\tau_i)$. 
\end{elem}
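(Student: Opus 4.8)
The plan is to show that the two distributions $T_1$ and $T_2$ are both parallel for the Levi-Civita connection $\nabla^g$, and then invoke the local de Rham splitting theorem. The crucial point is that although $T_1$ and $T_2$ are by hypothesis $\nabla^\tau$-parallel, passing to $\nabla^g = \nabla^\tau - \tau$ introduces a correction term $\tau_X$; one must check that this correction preserves each $T_i$. Fix a local section $Y$ of $T_1$ and a vector field $X$. Then $\nabla^g_X Y = \nabla^\tau_X Y - \tau_X Y$, and $\nabla^\tau_X Y \in \Gamma(T_1)$ since $T_1$ is $\nabla^\tau$-parallel, so it suffices to show $\tau_X Y \in \Gamma(T_1)$, i.e.\ $g(\tau_X Y, Z) = 0$ for all $Z \in \Gamma(T_2)$. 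But $g(\tau_X Y, Z) = \tau(X,Y,Z)$, and since $\tau = \tau_1 + \tau_2$ with $\tau_1 \in \Lambda^3 T_1$ and $\tau_2 \in \Lambda^3 T_2$, the form $\tau$ vanishes on any triple of vectors not all lying in $T_1$ or all in $T_2$; in particular it vanishes when one argument lies in $T_1$ and another in $T_2$. Hence $\tau_X Y \in \Gamma(T_1)$, and therefore $\nabla^g_X Y \in \Gamma(T_1)$. By symmetry the same argument shows $T_2$ is $\nabla^g$-parallel.

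Once both $T_1$ and $T_2$ are $\nabla^g$-parallel orthogonal complementary distributions, the local de Rham decomposition theorem gives, around every point, an isometry of a neighbourhood in $M$ with a Riemannian product $(M_1, g_1) \times (M_2, g_2)$ under which $T_i$ corresponds to the tangent bundle of the factor $M_i$ (pulled back). It remains to identify the torsion. Under this local product structure, write $\tau = \tau_1 + \tau_2$ with $\tau_i \in \Gamma(\Lambda^3 T_i)$; I claim each $\tau_i$ is a form pulled back from $M_i$ and is $\nabla^{\tau_i}$-parallel on $M_i$. Since $\tau_i$ takes values in $\Lambda^3 T_i$ and $T_i$ is integrable with totally geodesic leaves (the de Rham factors), $\tau_i$ restricts to a 3-form on each leaf; the parallelism $\nabla^\tau \tau = 0$ together with the block-diagonal form of $\nabla^\tau$ with respect to the splitting — which follows because $\tau_X$ preserves each $T_i$, as shown above, and $\nabla^g$ does too — forces $\nabla^\tau_X \tau_i = 0$ for all $X$. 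Taking $X$ tangent to $M_1$ shows $\tau_2$ is constant along the $M_1$-directions (it is annihilated by $\nabla^g$ in those directions since $\tau_X \tau_2$ lands in $T_1$-mixed components which vanish), hence descends to $M_2$; symmetrically $\tau_1$ descends to $M_1$. Restricting $\nabla^\tau \tau = 0$ to directions and arguments tangent to $M_i$ then reads exactly $\nabla^{g_i} \tau_i + \text{(torsion term)} = 0$, i.e.\ $\nabla^{\tau_i}\tau_i = 0$ on $M_i$.

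The main obstacle is bookkeeping rather than conceptual: one has to verify carefully that the connection $\nabla^\tau$ is genuinely block-diagonal with respect to $\T M = T_1 \oplus T_2$ — equivalently that for $X \in \Gamma(T_1)$ and $Y \in \Gamma(T_2)$ one has $\nabla^\tau_Z X \in \Gamma(T_1)$ and $\nabla^\tau_Z Y \in \Gamma(T_2)$ for all $Z$, which is immediate from $\nabla^\tau$-parallelism of the $T_i$ — and then that this block structure, combined with the analogous statement for $\nabla^g$ proved above, lets one decompose the identity $\nabla^\tau \tau = 0$ into its $\Lambda^3 T_1$-part and $\Lambda^3 T_2$-part and differentiate only in the relevant directions. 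A small point worth stating explicitly is that $\tau_X$ preserving $T_1$ and $T_2$ also means $\tau^2$ (hence $R^g - R^\tau$, via Lemma \ref{symm}) respects the splitting, which is consistent with the de Rham reduction but not needed for the argument. Everything else is the standard passage from parallel complementary distributions to a local metric product.
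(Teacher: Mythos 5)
Your proposal is correct and follows essentially the same route as the paper: first show that each $T_i$ is $\nabla^g$-parallel because $\tau$ has no mixed components (so $\tau_XY\in\Gamma(T_1)$ for $Y\in\Gamma(T_1)$), apply the local de Rham theorem, and then use $\nabla^g_X\tau_2=-\tau_X\cdot\tau_2=-(\tau_1)_X\cdot\tau_2=0$ for $X\in\Gamma(T_1)$ (and symmetrically) to see that each $\tau_i$ projects to $M_i$ and is $\nabla^{\tau_i}$-parallel. The only cosmetic difference is your slightly roundabout justification that the correction term annihilating $\tau_2$ vanishes; the cleanest statement is simply that $(\tau_1)_X$ kills $T_2$ as an endomorphism, hence acts trivially on $\Lambda^3T_2$, which is exactly the paper's computation.
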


\begin{proof}
For every vector fields $X\in\Gamma(T_1)$ and $Y\in\Gamma(\T M)$ the assumptions in Definition \ref{ri} yield $\nabla^\tau_YX\in\Gamma(T_1)$ and $\tau(X,Y)\in\Gamma(T_1)$, whence
$$\nabla^g_YX=\nabla^\tau_YX-\tau(Y,X)\in \Gamma(T_1),$$
thus showing that $T_1$ is $\nabla^g$-parallel. Similarly, $T_2$ is $\nabla^g$-parallel. By the local de Rham theorem, $(M,g)$ is locally isometric to a Riemannian product $(M_1,g_1)\times (M_2,g_2)$. 

Moreover, for every $X\in\Gamma(T_1)$ we have
$$\nabla^g_X\tau_2=\nabla^\tau_X\tau_2-\tau_X\cdot\tau_2=-\tau_X\cdot\tau_2=-(\tau_1)_X\cdot\tau_2=0,$$
and similarly $\nabla^g_Z\tau^1=0$ for every vector field $Z\in\Gamma(T_2)$. This shows that $\tau_1$ and $\tau_2$ are projectable on $M_1$ and $M_2$ respectively. It is now clear that $\tau_i$ is parallel with respect to $\nabla^{\tau_i}:=\nabla^{g_i}+\tau_i$, so
$(M_i,g_i)$ are Riemannian manifolds with metric connection $\nabla^{\tau_i}$ with parallel skew-symmetric torsion for $i=1,2$.
\end{proof}

We will now introduce the key notion of the paper, which will eventually lead to the classification results below. This consists in decomposing the holonomy representation into irreducible summands, and attributing to each summand a ``horizontal'' or ``vertical'' character, according to whether there exists or not a non-zero element in the holonomy algebra which acts non-trivially only on that summand. For example, every non-flat irreducible summand in the holonomy representation of the Levi-Civita connection is of horizontal type in this sense, by the de Rham Holonomy Theorem. 

We will see below that the theory of Berger algebras forces the curvature to be parallel along the ``vertical'' summands, which turn out to (locally) define a submersion with totally geodesic and locally homogeneous fibres over a geometry with parallel skew-symmetric torsion of smaller dimension.
To make things precise, let us denote by $n$ the dimension of $M$ and by $\hol\subset\so(n)$ the holonomy algebra of $\nabla^\tau$. 

\begin{ede}\label{33}The representation of $\hol$ on $\RM^n$ decomposes into an orthogonal sum of irreducible $\hol$-modules $\hh_\alpha$ and $\v_j$ such that
each summand $\hh_\alpha$ satisfies $\hol_\alpha:=\so(\hh_\alpha)\cap\hol\not=0$ and each summand $\v_j$ 
satisfies $\so(\v_j)\cap\hol=0$. We define $\hh:=\oplus_\alpha\hh_\alpha$ and $\v:=\oplus_j\v_j.$ \end{ede}

Note that the decomposition in irreducible summands of the holonomy representation of $\hol$ might not be unique, in case there exist isotypical summands. However, it is clear that an isotypical summand is necessarily of vertical type (since any element of $\hol$ would act non-trivially on all other summands isomorphic to it), and thus the subspaces $\hh$ and $\v$ are canonically defined.

\begin{elem}\label{taum} 
For every $\alpha$, the representation of $\hol$ on $\hh_\alpha\otimes\Lambda^2\hh_\alpha^\perp$ has no trivial subspace. 
\end{elem}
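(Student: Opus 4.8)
The plan is to argue by contradiction using only the action of the ideal $\k_\alpha\subseteq\k$, rather than that of the full holonomy algebra. First I record the structural facts needed. By construction $\k_\alpha=\so(\hh_\alpha)\cap\k$ is a nonzero ideal of $\k$. Viewing $\so(\hh_\alpha)$ as the subalgebra of $\so(n)$ consisting of skew-symmetric endomorphisms supported on $\hh_\alpha$ (and zero on $\hh_\alpha^\perp=\v\oplus\bigoplus_{\beta\neq\alpha}\hh_\beta$), every element of $\k_\alpha$ annihilates $\hh_\alpha^\perp$, hence also $\Lambda^2\hh_\alpha^\perp$; therefore $\k_\alpha$ acts on $\hh_\alpha\otimes\Lambda^2\hh_\alpha^\perp$ only through the first factor. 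On the other hand, $\k_\alpha$ is a nonzero subalgebra of $\End(\hh_\alpha)$, so it acts nontrivially on $\hh_\alpha$.

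Now suppose $\psi\in\hh_\alpha\otimes\Lambda^2\hh_\alpha^\perp$ is a nonzero $\k$-invariant element; in particular it is $\k_\alpha$-invariant. Write $\psi=\sum_i h_i\otimes\omega_i$ with $h_i\in\hh_\alpha$ and $\{\omega_i\}$ linearly independent in $\Lambda^2\hh_\alpha^\perp$. Letting $A\in\k_\alpha$ act on $\psi$ by the natural representation and using that $A$ annihilates each $\omega_i$, we obtain $0=\sum_i(Ah_i)\otimes\omega_i$, and linear independence of the $\omega_i$ forces $Ah_i=0$ for all $i$. Thus every $h_i$ lies in the subspace $W:=\{h\in\hh_\alpha : \k_\alpha h=0\}$ of $\k_\alpha$-invariant vectors of $\hh_\alpha$.

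Finally I claim $W$ is a $\k$-submodule of $\hh_\alpha$: if $h\in W$, $B\in\k$ and $A\in\k_\alpha$, then $A(Bh)=B(Ah)+[A,B]h=0$, since $Ah=0$ and $[A,B]\in\k_\alpha$ (because $\k_\alpha$ is an ideal), so $[A,B]h=0$ as well. Since $\hh_\alpha$ is an irreducible $\k$-module, $W$ is either $0$ or all of $\hh_\alpha$; the latter is impossible because $\k_\alpha$ acts nontrivially on $\hh_\alpha$. Hence $W=0$, all $h_i$ vanish, and $\psi=0$, contradicting the choice of $\psi$. The only mildly delicate point is recognizing that the weaker hypothesis of $\k_\alpha$-invariance already suffices and that the ideal property of $\k_\alpha$ is exactly what makes the space of $\k_\alpha$-invariants a $\k$-submodule, so that irreducibility of $\hh_\alpha$ can be invoked; the rest is routine.
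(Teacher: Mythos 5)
Your proof is correct and follows essentially the same route as the paper: you use that $\k_\alpha$ acts trivially on $\hh_\alpha^\perp$ to reduce invariance to $\k_\alpha$-invariance of the $\hh_\alpha$-components, and then the ideal property of $\k_\alpha$ to show that the space of $\k_\alpha$-invariant vectors in $\hh_\alpha$ is a $\k$-submodule, which must vanish by irreducibility and the nontriviality of $\k_\alpha$. The paper's proof is the same argument, merely presented in the opposite order (it first shows the invariant subspace $E_\alpha\subset\hh_\alpha$ is zero, then applies this to the components of an invariant element).
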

\begin{proof} Consider the space 
$$E_\alpha:=\{v\in \hh_\alpha\ |\ Av=0,\ \forall A\in\hol_\alpha\}.$$
Since $\hol_\alpha$ is an ideal of $\hol$, for every $v\in E_\alpha$, $A\in\hol_\alpha$ and $B\in\hol$ we get:
$$A(Bv)=[A,B]v+B(Av)=0,$$
thus showing that $E_\alpha$ is a $\hol$-invariant subspace of $\hh_\alpha$. By the irreducibility of $\hh_\alpha$, we deduce that either $E_\alpha=0$ or $E_\alpha=\hh_\alpha$. The latter case is impossible by the very definition of $\hol_\alpha$. Thus $E_\alpha=0$.

Suppose that $u\in\hh_\alpha\otimes\Lambda^2\hh_\alpha^\perp$ satisfies $B u=0$ for every $B\in\hol$. We write $u=\sum_i v_i\otimes w_i$ where $w_i$ is a basis of $\Lambda^2\hh_\alpha^\perp$. By definition, every $A\in\hol_\alpha$ acts trivially on $\hh_\alpha^\perp$. We thus get $0=Au=\sum_i(A v_i)\otimes w_i$ for every $A\in\hol_\alpha$, whence 
\be\label{av}A v_i =0,\qquad\forall i,\ \forall A\in\hol_\alpha\ .\ee 
Thus $v_i\in E_\alpha=0$ for all $i$, so finally $u=0$.
\end{proof}

Decomposing $\Lambda^3\RM^n$ according to the decomposition $\RM^n=(\oplus_\alpha\hh_\alpha)\oplus(\oplus_j\v_j)= \hh\oplus\v$ and using the above result, we readily obtain that every $\hol$-invariant element of $\Lambda^3\RM^n$ is contained in the following sub-space:
\begin{equation}\label{dectau}(\Lambda^3\RM^n)^\hol\subset(\oplus_\alpha\Lambda^3\hh_\alpha)\oplus(\oplus_\alpha\Lambda^2\hh_\alpha\otimes\v)\oplus\Lambda^3\v\subset \Lambda^3\hh\oplus(\Lambda^2\hh\otimes\v)\oplus\Lambda^3\v.\end{equation}

Corresponding to the decomposition $\RM^n=(\oplus_\alpha\hh_\alpha)\oplus(\oplus_j\v_j)= \hh\oplus\v$ we obtain an orthogonal $\nabla^\tau$-parallel decomposition of the tangent bundle of $M$ as
\begin{equation}
  \label{eq:7}
  \T M=(\oplus_\alpha\H_\alpha M)\oplus(\oplus_j\V_j M)=\H M \oplus \V M \ .\end{equation}
  
\begin{ede}\label{stdec} The above defined decomposition $ \T M=\H M \oplus \V M$ will be referred to as the  {\em standard decomposition} of the tangent bundle of a manifold with parallel skew-symmetric torsion.
\end{ede}

We now decompose the torsion as a sum of $\nabla^\tau$-parallel tensors
\begin{equation*}
  \tau=\tau^\hh+\tau^m+\tau^\v\ ,
\end{equation*}
where $\tau^\hh\in\Lambda^3\H M$, $\tau^\v\in\Lambda^3\V M$ and $\tau^m\in (\Lambda^2\H M\otimes \V M)\oplus (\Lambda^2\V M\otimes \H M)$. 
By \eqref{dectau} we obtain directly:

\begin{elem} \label{tors} The projection of $\tau$ onto $\Lambda^2\V M\otimes \H M $ vanishes, i.e. $\tau^m\in \Lambda^2\H M\otimes \V M$. Moreover, $\tau^\hh$ and $\tau^m$ have further decompositions
$$\tau^\hh=\sum_\alpha\tau^{\hh_\alpha}\in\bigoplus_\alpha\Lambda^3\H_\alpha M,\qquad \tau^m\in\left(\bigoplus_\alpha\Lambda^2\H_\alpha M\right)\otimes\V M.$$
\end{elem}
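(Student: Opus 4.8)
The plan is to read the statement off directly from the algebraic fact \eqref{dectau}, using that the torsion form is, at each point, invariant under the holonomy group of $\nabla^\tau$.

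First I would note that $\nabla^\tau\tau=0$ means that $\nabla^\tau$-parallel transport preserves $\tau$, so for every $x\in M$ the value $\tau_x\in\Lambda^3\T_xM$ is fixed by $\Hol_x(\nabla^\tau)$ and hence annihilated by the holonomy algebra $\k$. Identifying $\T_xM$ with $\RM^n$ by $\nabla^\tau$-parallel transport along the holonomy bundle, the $\k$-module splitting $\RM^n=(\oplus_\alpha\hh_\alpha)\oplus(\oplus_j\v_j)$ becomes the fibre over $x$ of the standard decomposition \eqref{eq:7}, and $\tau_x$ is an element of $(\Lambda^3\RM^n)^\k$. Applying \eqref{dectau} pointwise then forces $\tau$ to take values in $(\oplus_\alpha\Lambda^3\H_\alpha M)\oplus(\oplus_\alpha\Lambda^2\H_\alpha M\otimes\V M)\oplus\Lambda^3\V M$.

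This is precisely the content of the lemma. The absence of an $\H M\otimes\Lambda^2\V M$-component says that the projection of $\tau$ onto $\Lambda^2\V M\otimes\H M$ vanishes, i.e. $\tau^m\in\Lambda^2\H M\otimes\V M$; the fact that the purely horizontal part $\tau^\hh$ lives in $\oplus_\alpha\Lambda^3\H_\alpha M$ (and not in the a priori larger $\Lambda^3\H M$, which would also contain mixed terms with distinct indices $\alpha\neq\beta$) gives the decomposition $\tau^\hh=\sum_\alpha\tau^{\hh_\alpha}$, with $\tau^{\hh_\alpha}$ the component of $\tau$ in $\Lambda^3\H_\alpha M$; and likewise $\tau^m\in(\oplus_\alpha\Lambda^2\H_\alpha M)\otimes\V M$. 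Each $\tau^{\hh_\alpha}$ is automatically $\nabla^\tau$-parallel, since the orthogonal projections $\Lambda^3\T M\to\Lambda^3\H_\alpha M$ attached to the $\nabla^\tau$-parallel decomposition \eqref{eq:7} are $\nabla^\tau$-parallel bundle maps. There is no genuine obstacle here: the proof is a direct unwinding of \eqref{dectau}, and the only point worth keeping in mind is the bookkeeping of which summands of $\Lambda^3\RM^n=\Lambda^3\big((\oplus_\alpha\hh_\alpha)\oplus(\oplus_j\v_j)\big)$ are ruled out — namely all those of the form $\hh_\alpha\otimes\Lambda^2\hh_\alpha^\perp$, which are killed by Lemma \ref{taum} exactly as in the derivation of \eqref{dectau}.
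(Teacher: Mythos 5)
Your proposal is correct and is essentially the paper's own argument: the paper obtains Lemma \ref{tors} directly from \eqref{dectau} via the holonomy principle (a $\nabla^\tau$-parallel tensor corresponds to a $\k$-invariant element), with the forbidden summands of $\Lambda^3\RM^n$ being exactly those sitting inside some $\hh_\alpha\otimes\Lambda^2\hh_\alpha^\perp$, which Lemma \ref{taum} excludes. Your bookkeeping of which components survive matches the derivation of \eqref{dectau} in the paper, so there is nothing to add.
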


This has an immediate consequence which we will discuss now.
We will assume for the rest of this section that $(M,g,\tau)$  is a geometry with parallel skew-symmetric torsion and standard decomposition $ \T M=\H M \oplus \V M$.

\begin{elem}\label{tot-geod}
The distribution $\V M$ is the vertical distribution of a locally defined Riemannian submersion $(M,g)\overset{\pi}\to (N,g^N)$
  with totally geodesic fibers.
\end{elem}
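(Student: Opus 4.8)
The plan is to establish two facts: that $\V M$ is an integrable distribution whose leaves are totally geodesic submanifolds of $(M,g)$, and that the orthogonal complement $\H M$ is "projectable'' in the sense that the metric is basic along the leaves. Integrability and total geodesy of $\V M$ can be read off from the second fundamental form: for vector fields $U,V\in\Gamma(\V M)$ we compute $\nabla^g_U V=\nabla^\tau_U V-\tau_U V$. The first term lies in $\Gamma(\V M)$ because $\V M$ is $\nabla^\tau$-parallel, and the second term $\tau_U V$ lies in $\Gamma(\V M)$ as well: indeed by Lemma \ref{tors} the torsion has no component in $\Lambda^2\V M\otimes\H M$, and its components in $\Lambda^3\V M$ and in $\Lambda^2\H M\otimes\V M$ both send a pair of vertical vectors either into $\V M$ (the $\Lambda^3\V M$ part) or to zero (the $\Lambda^2\H M\otimes\V M$ part, which annihilates two vertical arguments). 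Hence $\nabla^g_U V\in\Gamma(\V M)$, which simultaneously shows that $\V M$ is integrable (as $[U,V]=\nabla^g_U V-\nabla^g_V U\in\Gamma(\V M)$) and that each leaf is totally geodesic in $(M,g)$.

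Next I would show that the Levi-Civita connection preserves the splitting $\T M=\H M\oplus\V M$ in the horizontal directions too, i.e. that for $X\in\Gamma(\H M)$ and $U\in\Gamma(\V M)$ one has the appropriate O'Neill-type behaviour making $g$ descend. Concretely, the key computation is to show that for $X,Y\in\Gamma(\H M)$ the vertical part of $\nabla^g_X Y$ agrees with minus the vertical part of $\tau_X Y$ up to the expected antisymmetrization, and in particular that the symmetric part (in $X,Y$) of the $\V M$-component of $\nabla^g_X Y$ vanishes. Since $\tau^m\in\Lambda^2\H M\otimes\V M$ is skew in its two horizontal arguments, $g(\tau_X Y,U)=\tau^m(X,Y,U)$ is antisymmetric in $X,Y$, so the symmetric part of $g(\nabla^g_X Y,U)$ equals the symmetric part of $g(\nabla^\tau_X Y,U)$, which is zero because $\nabla^\tau$ preserves $\H M$. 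This is exactly the statement that the horizontal distribution together with $g$ defines a Riemannian submersion: the $\V M$-leaves are the fibers, the induced metric on local quotients $N$ is well-defined, and $\pi$ is a Riemannian submersion.

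To make the local submersion rigorous I would invoke the standard fact (Frobenius plus the local structure of foliations with totally geodesic leaves) that an integrable distribution whose orthogonal complement has the property that $\LieD_U g$ vanishes on $\H M\times\H M$ for every $U\in\Gamma(\V M)$ yields, locally, a Riemannian submersion onto the leaf space with the given fibers; this is where one passes from the infinitesimal statements to an actual map $\pi\colon M\to N$. The condition $\LieD_U g(X,Y)=0$ for horizontal $X,Y$ follows from the same computation: $(\LieD_U g)(X,Y)=g(\nabla^g_X U,Y)+g(\nabla^g_Y U,X)$, and each term is controlled by the already-established fact that $\nabla^\tau$ preserves the splitting together with the skew-symmetry of $\tau^m$ in its horizontal slots.

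The main obstacle I anticipate is not any single computation — each is a short manipulation using Lemma \ref{tors} and the $\nabla^\tau$-parallelism of the splitting — but rather bookkeeping the various torsion components carefully: one must keep track of which of $\tau^\hh$, $\tau^m$, $\tau^\v$ contributes when the arguments are vertical versus horizontal, and exploit precisely that $\tau^m$ lives in $\Lambda^2\H M\otimes\V M$ (and not in $\Lambda^2\V M\otimes\H M$), which is the content of Lemma \ref{tors} and is what forces the second fundamental form of the fibers to vanish rather than merely the mean curvature. The passage from the local differential-geometric data to the existence of the submersion $\pi$ is standard and I would simply cite it.
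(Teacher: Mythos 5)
Your proposal is correct and follows essentially the same route as the paper: first use Lemma \ref{tors} and the $\nabla^\tau$-parallelism of the splitting to get $\nabla^g_UV=\nabla^\tau_UV-\tau_UV\in\Gamma(\V M)$, hence integrability and totally geodesic leaves, and then check that $g$ restricted to $\H M$ is basic along the leaves via the computation $(\LieD_Ug)(X,Y)=g(\nabla^g_XU,Y)+g(X,\nabla^g_YU)$, which vanishes by the skew-symmetry of $\tau$ in its horizontal slots; the paper phrases this directly as $\LieD_Ug^\hh=0$ rather than through the symmetric part of the vertical component of $\nabla^g_XY$, but the content is identical.
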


\begin{proof}
Lemma \ref{tors} shows that for $V,W$ in $\V M$ we have $\nabla^g_VW=\ct_VW-\tau_VW\in \V M$.  Thus $\V M$ is a totally geodesic integrable distribution. We need to show that the restriction $g^\H$ of the metric $g$ to $\H M$ is  constant along the leaves 
  of $\V M$. The Lie derivative
$$(\LieD_Vg^\H)(A,B) = V(g^\H(A,B)) - g^\H([V,A],B)  - g^\H(A,[V,B])$$
clearly vanishes if $A$ or $B$ is a section of $\V M$. For $X,Y$ sections of $\H M$ we have:
\bea
(\LieD_Vg^\H)(X,Y) &=& V(g^\H(X,Y)) -g^\H([V,X],Y) -g^\H(X,[V,Y])\\
&=& g(\nabla^g_XV,Y)+g(X,\nabla^g_YV) = g(\ct_XV,Y) + g(X,\ct_YV)=0\ .
\eea
This shows that if $N$ denotes the space of leaves of $\V M$ in some neighbourhood of $M$, $g^\H$ projects to a Riemannian metric $g^N$ on $N$.
\end{proof}

\begin{ede}\label{stsub}
The Riemannian submersion $\pi:(M,g) \to (N, g^N)$ defined in Lemma \ref{tot-geod} will be called the {\it standard 
submersion} of a manifold with parallel skew-symmetric torsion.
\end{ede}

The next result is the crucial step for showing that the horizontal part of the torsion is a projectable tensor with respect to the standard submersion:
\begin{elem}\label{tauvtau}
For every vector $V$ in $\V M$ one has $\tau_V\cdot \tau^\hh=0$.
\end{elem}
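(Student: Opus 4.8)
The plan is to exploit the $\ct$-parallelism of all the pieces of the standard decomposition together with the pair symmetry of $R^\tau$ established in Lemma \ref{symm}. The key point is that $\tau_V \cdot \tau^\hh$ is a $4$-form built from $\ct$-parallel tensors, hence is itself $\ct$-parallel, so it suffices to show that it vanishes at one point, or equivalently to show that its norm (or a suitable trace) vanishes. More precisely, I would first observe that since $\tau^\hh \in \Lambda^3 \H M$ and $\tau^m \in \Lambda^2\H M \otimes \V M$ (Lemma \ref{tors}), and since $\tau_V$ for $V \in \V M$ annihilates $\H M$-directions coming only from $\tau^m$ (again by Lemma \ref{tors}, the $\Lambda^2\V M \otimes \H M$ part is zero), the action $\tau_V$ on $\tau^\hh$ really measures how $V$-contractions of the torsion interact with the horizontal part.

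**Key steps.** First, I would use the fact that $\nabla^\tau \tau = 0$ implies that the curvature operator $R^\tau_{X,Y}$, acting as a derivation, kills $\tau$; in particular, for $V, W \in \V M$, one has $R^\tau_{V,W} \cdot \tau = 0$. Second, I would compute $R^\tau_{V,W}$ using the structure equation: from $\nabla^g = \nabla^\tau - \tau$ and formula \eqref{rgt}, the curvature $R^\tau_{V,W}$ differs from $R^g_{V,W}$ by the $\tau^2$ term. Third — and this is the heart of the matter — I would use the second Bianchi-type identity or, more directly, the algebraic consequence of $\ct$-parallelism that relates $R^\tau$ evaluated on mixed vertical-horizontal arguments to the tensor $\tau_V \cdot \tau^\hh$. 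Concretely, differentiating $\ct_X \tau = 0$ in a vertical direction $V$ and using $\ct_V$ preserving both $\H M$ and $\V M$, one extracts the identity $R^\tau_{V, X}\tau = 0$ for $X$ horizontal, and expanding this with the splitting $\tau = \tau^\hh + \tau^m + \tau^\v$ should isolate $\tau_V \cdot \tau^\hh$ as a curvature term that pairs trivially by the pair symmetry of $R^\tau$ (since one index lands in $\V M$, another in $\H M$, and the holonomy of $\nabla^\tau$ respects the splitting, so $R^\tau_{V,X}$ has no component mixing $\H_\alpha M$ with itself in the relevant way).

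**Alternative and the main obstacle.** An alternative, perhaps cleaner route: show directly that $\|\tau_V \cdot \tau^\hh\|^2 = 0$ by writing it as a sum over $\alpha$ of $\|\tau_V \cdot \tau^{\hh_\alpha}\|^2$ and using that $\tau_V$, restricted to $\H_\alpha M$, is a skew-symmetric endomorphism; then $\tau_V \cdot \tau^{\hh_\alpha} = [\tau_V|_{\H_\alpha M}, \cdot\,]$-type action on the $3$-form, and parallelism forces $\tau_V|_{\H_\alpha M}$ to lie in a subalgebra that is too small — specifically one would argue that $\tau_V$ acting on $\H_\alpha M$ must lie in the centralizer of $\k_\alpha$ inside $\so(\H_\alpha M)$ while simultaneously preserving $\tau^{\hh_\alpha}$, and invoke Lemma \ref{taum} (no trivial subspace in $\hh_\alpha \otimes \Lambda^2\hh_\alpha^\perp$) to kill it. I expect the main obstacle to be bookkeeping: correctly tracking which components of $\tau$ contribute to $\tau_V \cdot \tau^\hh$ and verifying that the cross-terms involving $\tau^m$ and $\tau^\v$ drop out — this requires careful use of Lemma \ref{tors} (that $\tau^m$ has no $\Lambda^2\V M \otimes \H M$ part) and of the fact that the holonomy algebra $\k$ preserves each $\H_\alpha M$ and each $\V_j M$ separately, so that $R^\tau_{V,W}$ has a block-diagonal structure incompatible with producing a nonzero element of $\Lambda^3 \H M$.
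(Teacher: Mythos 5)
There is a genuine gap: neither of your two routes actually produces the quantity $\tau_V\cdot\tau^\hh$ from the facts you invoke. Your main route only uses holonomy invariance of the torsion, i.e. $R^\tau_{V,X}\cdot\tau=0$; but this is a constraint that is \emph{linear} in $\tau$ (and linear in the unknown curvature operator $R^\tau_{V,X}$), whereas $\tau_V\cdot\tau^\hh$ is \emph{quadratic} in $\tau$, and you never specify the identity that would convert the one into the other. The missing ingredient is a genuinely differential identity relating $\d\tau$ (equivalently the first Bianchi identity of $\nabla^\tau$, or, as the paper does, the identity $\d^2V=0$ applied to the $1$-form dual to an arbitrary vertical field $V$) to the quadratic expression $\tau_V\cdot\tau$; pair symmetry of $R^\tau$ plays no role here and cannot substitute for it. The preliminary remark that $V\mapsto\tau_V\cdot\tau^\hh$ is a $\nabla^\tau$-parallel tensor (it is a $3$-form, not a $4$-form, by the way) is true but does not advance the proof: showing it vanishes at one point is exactly the original problem.

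Your alternative route is flawed at its key step. For a general (non-parallel) vertical field $V$ there is no reason for $\tau_V|_{\H_\alpha M}$ to lie in the centralizer of $\k_\alpha$, since $\tau^m$ is $\k$-invariant only as an element of $\Lambda^2\hh_\alpha\otimes\v$, not after contracting with an arbitrary $V\in\v$. More importantly, even if $\tau_V|_{\H_\alpha M}$ did centralize $\k_\alpha$ and $\tau^{\hh_\alpha}$ is $\k_\alpha$-invariant, the derivation action need not vanish: take $\hh_\alpha=\RM^6$ with $\k_\alpha=\su(3)$, $\tau^{\hh_\alpha}=\psi_+$ the real part of the complex volume form and $\tau_V=\omega$ the K\"ahler form (which generates the centralizer of $\su(3)$ in $\so(6)$); then $\omega\cdot\psi_+=3\psi_-\neq0$. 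So the statement is not a purely representation-theoretic consequence of $\k$-invariance, and Lemma \ref{taum} cannot ``kill'' such a term anyway, since it concerns the module $\hh_\alpha\otimes\Lambda^2\hh_\alpha^\perp$ and says nothing about elements of $\Lambda^2\hh_\alpha$ acting on $\Lambda^3\hh_\alpha$. The paper's argument is of a different nature: it expands $0=\d^2V$ using $\nabla^g=\nabla^\tau-\tau$, cancels the first-order terms, and then projects onto $\Lambda^3\H M$, where the only surviving term is a multiple of $\tau_V\cdot\tau^\hh$ (using Lemma \ref{tors} and the fact that $\nabla^\tau$ preserves $\V M$). You would need to supply an identity of this kind to close the argument.
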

\begin{proof}
By trilinearity, it is enough to show that $(\tau_V\cdot \tau^\hh)(X,Y,Z)=0$ when each of the vectors $X,Y,Z$ is either in $\V M$ or in $\H M$.
Since $\tau_V$ preserves the splitting $ \T M=\H M \oplus \V M$, the expression 
$$(\tau_V\cdot \tau^\hh)(X,Y,Z)=-\mathop{\mathfrak{S}}_{XYZ}\; \tau^\hh(X,Y, \tau_VZ)$$
vanishes when at least one of $X,Y,Z$ is tangent to $\V M$. Finally, when $X,Y,Z\in\H M$, the Bianchi identity \eqref{bia} together with the fact that $\mathop{\mathfrak{S}}_{XYZ}\; R^\tau(X,Y,Z,V)=0$ (as $\H M$ and $\V M$ are orthogonal $\nabla^\tau$-parallel distributions), yields
$$0=\mathop{\mathfrak{S}}_{XYZ}\; g(\tau_XY, \tau_ZV)=-\mathop{\mathfrak{S}}_{XYZ}\; \tau(X,Y, \tau_VZ)=-\mathop{\mathfrak{S}}_{XYZ}\; \tau^\hh(X,Y, \tau_VZ)=(\tau_V\cdot \tau^\hh)(X,Y,Z)\ .$$
\end{proof}

Using Lemma \ref{tauvtau} we can now prove the announced result:

\begin{elem}\label{projectable}
The horizontal part $\tau^\hh$ of the torsion $\tau$ is projectable to the base $N$ of the standard submersion.
\end{elem}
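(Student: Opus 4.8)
To show that $\tau^\hh$ is projectable to $N$ it suffices, since $\tau^\hh\in\Lambda^3\H M$ already, to check that its Lie derivative along any vertical vector field vanishes, i.e. $\LieD_V\tau^\hh=0$ for all $V\in\Gamma(\V M)$. The strategy is to rewrite $\LieD_V$ in terms of the parallel connection $\ct$ and the torsion, reducing everything to Lemma \ref{tauvtau} and the $\ct$-parallelism of $\tau^\hh$.

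\begin{proof}
Since $\tau^\hh$ is a section of $\Lambda^3\H M$ which is $\ct$-parallel, it suffices to show that $\LieD_V\tau^\hh=0$ for every $V\in\Gamma(\V M)$; indeed, this is exactly the condition that $\tau^\hh$ descends to a $3$-form on $N$ (a horizontal form invariant under the flow of the vertical distribution). We use the classical formula expressing the Lie derivative of a $k$-form via a metric connection with torsion. For the Levi-Civita connection one has, for any $k$-form $\sigma$ and vector field $V$,
$$
\LieD_V\sigma=\nabla^g_V\sigma+(\nabla^g V)\cdot\sigma\ ,
$$
where $\nabla^g V$ is regarded as a skew-symmetric endomorphism of $\T M$ (this uses that $\nabla^g$ is torsion-free; $(\nabla^g V)$ is indeed skew-symmetric only after projecting, but the derivation action \eqref{adot} picks out exactly the relevant part). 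Substituting $\nabla^g=\ct-\tau$ and using $\nabla^g_V\sigma=\ct_V\sigma-\tau_V\cdot\sigma$, together with $\nabla^g_X V=\ct_X V-\tau_X V$, we obtain for $\sigma=\tau^\hh$:
$$
\LieD_V\tau^\hh=\ct_V\tau^\hh-\tau_V\cdot\tau^\hh+\sum_i(\ct_{e_i}V-\tau_{e_i}V)\wedge\big(e_i\lr\tau^\hh\big)\ .
$$
Now $\ct_V\tau^\hh=0$ since $\tau^\hh$ is $\ct$-parallel, and $\tau_V\cdot\tau^\hh=0$ by Lemma \ref{tauvtau}. It remains to treat the last sum, which is the derivation action on $\tau^\hh$ of the endomorphism $X\mapsto\nabla^g_X V$. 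Since $\ct$ preserves the decomposition $\T M=\H M\oplus\V M$, the endomorphism $X\mapsto\ct_X V$ maps $\T M$ into $\V M$, hence its derivation action annihilates $\tau^\hh\in\Lambda^3\H M$; likewise, by Lemma \ref{tors} the endomorphism $\tau_V$ maps $\T M$ into itself preserving the splitting, but the precise term appearing here is $X\mapsto\tau_X V$, and using the skew-symmetry of $\tau$ one has $\tau_X V=-\tau_V X$, so this endomorphism is (minus) $\tau_V$ restricted appropriately and again acts on $\tau^\hh$ as the derivation $\tau_V$, which is zero by Lemma \ref{tauvtau} once more. Collecting terms gives $\LieD_V\tau^\hh=0$, which is what we wanted.
\end{proof}

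\textbf{Main obstacle.} The delicate point is the bookkeeping in the last sum: one must be careful that the ``connection term'' in $\LieD_V$ genuinely reassembles into $\ct_V$-parallelism plus a derivation by $\tau_V$ (so that Lemma \ref{tauvtau} applies), rather than producing a stray term involving the vertical-to-horizontal components of $\nabla^\tau V$. The key facts that make this work are that $\ct$ preserves $\H M$ and $\V M$ (so $\ct_\bullet V$ has image in $\V M$ and kills a form in $\Lambda^3\H M$), and the skew-symmetry identity $\tau_X V=-\tau_V X$ which converts the offending term back into the derivation action of $\tau_V$; it is worth double-checking the sign and combinatorial factors in the derivation formula \eqref{adot} to be sure nothing survives.
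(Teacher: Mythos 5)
Your overall plan coincides with the paper's: reduce projectability to $\LieD_V\tau^\hh=0$ for every vertical $V$, and obtain this from $\ct\tau^\hh=0$, Lemma \ref{tauvtau}, the fact that $\ct$ preserves $\V M$, and the skew-symmetry $\tau_XV=-\tau_VX$ (the paper runs the computation through Cartan's formula $\LieD_V=V\lr\dd+\dd(V\lr\cdot)$ instead of the $\nabla^g$-expression of $\LieD_V$). However, as written your central identity is not correct, and the step you yourself flag as delicate fails in that form. For the torsion-free connection $\nabla^g$ the correct formula is
$$(\LieD_V\sigma)(X_1,\dots,X_k)=(\nabla^g_V\sigma)(X_1,\dots,X_k)+\sum_j\sigma(X_1,\dots,\nabla^g_{X_j}V,\dots,X_k)\ ,$$
i.e. the connection term \emph{inserts} $\nabla^g_{X_j}V$ into the arguments of $\sigma$. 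This insertion action is the transpose of the derivation action \eqref{adot}: for any endomorphism $A$ one has $\bigl(\sum_iAe_i\wedge (e_i\lr\sigma)\bigr)(X_1,\dots,X_k)=\sum_j\sigma(X_1,\dots,A^tX_j,\dots,X_k)$, and the two actions agree (up to sign) only when $A$ is skew-symmetric. For a general section $V$ of $\V M$ the endomorphism $X\mapsto\nabla^g_XV$ is not skew, so your displayed expression $\sum_i(\ct_{e_i}V-\tau_{e_i}V)\wedge(e_i\lr\tau^\hh)$ is not the connection term of $\LieD_V\tau^\hh$. Worse, for that wedge-contraction expression your vanishing argument breaks down: $\sum_i(\ct_{e_i}V)\wedge(e_i\lr\tau^\hh)$ lies in $\V M\wedge\Lambda^2\H M$ and is in general nonzero — an endomorphism with image in $\V M$ does \emph{not} annihilate $\Lambda^3\H M$ under the action \eqref{adot}; it annihilates it only under the insertion action. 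So the conclusion $\LieD_V\tau^\hh=0$ is reached in your text only because two errors (the wrong form of the Lie-derivative identity and the wrong vanishing claim) compensate.

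The repair is immediate once the formula is stated with the insertion action. Write $\nabla^g_XV=\ct_XV+\tau_VX$. Inserting $\ct_{X_j}V\in\V M$ into the horizontal form $\tau^\hh$ gives zero, since $\ct$ preserves the splitting; the remaining endomorphism is $\tau_V$, which is skew, so its insertion action equals $-\tau_V\cdot\tau^\hh=0$ by Lemma \ref{tauvtau}; finally $\nabla^g_V\tau^\hh=\ct_V\tau^\hh-\tau_V\cdot\tau^\hh=0$ by the same lemma. With this correction your argument is complete and uses exactly the same ingredients as the paper's proof, just organized through the $\nabla^g$-formula for $\LieD_V$ rather than through $V\lr\dd\tau^\hh$.
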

\proof
It suffices to show that $\LieD_V \tau^\hh = 0$ for all vector fields $V$ in $\V M$.
The torsion $\tau$ is $\nabla^\tau$-parallel, and so are its components $\tau^\hh$, $\tau^m$ and $\tau^\v$. In particular, for every $X\in \T M$ we have
$0= \nabla^\tau_X\tau^\hh = \nabla^g_X\tau^\hh + \tau_X \cdot \tau^\hh$. Moreover, for $V\in\V M$ we have $V\lr \tau^\hh=0$, so we can compute in a local orthonormal basis $\{e_i\}$ of $\T M$ using Lemma \ref{tauvtau} and the Cartan formula:
\bea
\LieD_V \tau^\hh &=&V \lr \dd \tau^\hh=\sum_i V \lr ( e_i \wedge \nabla^g_{e_i}  \tau^\hh)=-\sum_i V \lr ( e_i \wedge \tau_{e_i}\cdot \tau^\hh)\\
&=&
-\tau_V\cdot\tau^\hh+\sum_i e_i\wedge(V\lr(\tau_{e_i}\cdot\tau^\hh))=\sum_i e_i\wedge(\tau_{e_i}\cdot(V\lr\tau^h)-\tau_{e_i}V\lr\tau^\hh)\\
&=&\sum_i e_i\wedge(\tau_V{e_i}\lr\tau^\hh)=-\sum_i \tau_Ve_i\wedge({e_i}\lr\tau^\hh)=-\tau_V\cdot\tau^\hh=0\ .
\eea
\qed

\begin{ere}\label{r411}
By Lemma \ref{projectable} there is a $3$-form $\sigma$ on $N$ such that $\tau^\hh  = \pi^\ast \sigma$. If $\nabla^{g^N}$ denotes the Levi-Civita connection of the metric $g^N$, 
Lemma \ref{tot-geod} shows that $\nabla^\sigma:=\nabla^{g^N}+\sigma$ is a connection with parallel skew-symmetric torsion $T^\sigma=2\sigma$ on $N$. Indeed, the fact that $\nabla^\sigma\sigma=0$ follows immediately from the O'Neill formulas for Riemannian submersions, together with the fact that for every vector $X$ in $\H M$ we have 
$$0=\nabla_X^\tau \tau^\hh=\nabla^g_X\tau^\hh+\tau_X\cdot\tau^\hh=\nabla^g_X\tau^\hh+\tau^\hh_X\cdot\tau^\hh.$$
\end{ere}

For later  use we state the following property of the curvature $R^\tau$, which in particular implies that the component in $\Lambda^2 \H M\otimes \Lambda^2 \V M$ 
of $R^\tau$ is parallel. 

\begin{elem}For every $X,Y\in \H M$ and $V,W\in\V M$, one has 
\be\label{rt}
R^\tau(X,Y,W,V)-4( g(\tau_XY,\tau_WV)  +g(\tau_YW,\tau_XV)  + g(\tau_WX,\tau_YV) )=0\ .
\ee
\end{elem}
\begin{proof}
Since $R^\tau(Y,W,X,V)=R^\tau(W,X,Y,V)=0$ (as $\H M$ and $\V M$ are orthogonal $\nabla^\tau$-parallel distributions), the Bianchi formula \eqref{bia} applied to $Z:=W$ yields:
\bea0&=&\mathop{\mathfrak{S}}_{XYW} \left(R^\tau(X,Y,W,V) -4g(\tau_XY, \tau_WV)\right)\\
&=&R^\tau(X,Y,W,V) - 4\mathop{\mathfrak{S}}_{XYW}(g(\tau_XY, \tau_WV))\\
&=&R^\tau(X,Y,W,V)-4( g(\tau_XY,\tau_WV) +g(\tau_YW,\tau_XV)  + g(\tau_WX,\tau_YV) )\ .
\eea
\end{proof}

We will now make another crucial observation, which will give more information about the fibers of the standard submersion.
Since by Lemma \ref{tot-geod} every fibre $F$ of the standard submersion is totally geodesic, the Levi-Civita connection of $M$ restricts
to the Levi-Civita connection of $F$, and the
connection $\ct$ restricts to a connection $\nabla^F$ with
parallel, skew-symmetric torsion $T^F:=2\tau^\v\vert_F$.  

\begin{epr}\label{rparallel1} The composition of the curvature tensor $R^\tau:\Lambda^2\T M\to \Lambda^2\V M\oplus\Lambda^2\H M$ with the projection on the first summand $\Lambda^2\V M$, is $\nabla^\tau$-parallel. In particular, $\nabla^F$ has parallel curvature and parallel skew-symmetric torsion, so $F$ is (locally) a naturally reductive homogeneous space.
\end{epr}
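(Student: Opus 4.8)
The plan is to prove the two assertions in turn: that $S:=\pi\circ R^\tau$ is $\nabla^\tau$-parallel, where $\pi\colon\so(\T M)\to\so(\V M)$ is the orthogonal projection, and then that the homogeneity of the fibres follows formally from this together with the Ambrose--Singer theorem. First I would record two elementary reductions. Since $\nabla^\tau$ preserves the standard decomposition $\T M=\H M\oplus\V M$, every endomorphism $R^\tau_{X,Y}$ and every $(\nabla^\tau_WR^\tau)_{X,Y}$ is block-diagonal, lies in the parallel sub-bundle $\k\subset\so(\T M)$, and preserves each irreducible summand $\H_\alpha M$ and $\V_j M$; moreover $\nabla^\tau$ commutes with $\pi$, so $\nabla^\tau_WS=\pi\circ(\nabla^\tau_WR^\tau)$ and the first assertion amounts to $(\nabla^\tau_WR^\tau)(X,Y,U,V)=0$ for all $W,X,Y\in\T M$ and $U,V\in\V M$. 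By Lemma \ref{symm} the tensor $\nabla^\tau R^\tau$ is pair-symmetric, so this equals $(\nabla^\tau_WR^\tau)(U,V,X,Y)$; by block-diagonality and preservation of summands it vanishes unless $U,V$ lie in one summand $\V_m M$ and $X,Y$ lie in one summand $\H_\alpha M$ or $\V_j M$, so only such configurations need be treated.

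The main tool is the second Bianchi identity for $\nabla^\tau$, which, since $T^\tau=2\tau$, reads
$$(\nabla^\tau_WR^\tau)(X,Y)+(\nabla^\tau_XR^\tau)(Y,W)+(\nabla^\tau_YR^\tau)(W,X)=-2\bigl(R^\tau(\tau_WX,Y)+R^\tau(\tau_XY,W)+R^\tau(\tau_YW,X)\bigr),$$
supplemented by the algebraic Bianchi identity for $R^\tau$ coming from \eqref{rgt}. One applies $\pi$ and evaluates at $U,V\in\V M$. If $W$ lies outside the summand containing $X,Y$, then pair-symmetry and block-diagonality annihilate the last two terms on the left, giving
$$(\nabla^\tau_WR^\tau)(X,Y,U,V)=-2\bigl(R^\tau(\tau_WX,Y,U,V)+R^\tau(\tau_XY,W,U,V)+R^\tau(\tau_YW,X,U,V)\bigr),$$
and I would show the right-hand side vanishes by a direct computation using: Lemma \ref{tors} (so that $\tau_V\in\Lambda^2\H M\oplus\Lambda^2\V M$ for $V\in\V M$, and $\tau_X$ acts on $\H M$ through $\Lambda^2\H_\alpha M$ while interchanging $\H_\alpha M$ and $\V M$ when $X\in\H_\alpha M$); Lemma \ref{tauvtau}; and the Ricci identity $R^\tau_{U,V}\cdot\tau^m=0$, which allows $R^\tau_{U,V}$ to be commuted past the endomorphism $(\tau^m)_W$ at the cost of a term $\tau^m(R^\tau_{U,V}W,\,\cdot\,,\,\cdot\,)$ that is exactly cancelled by the contribution of the vertical part of $\tau_XY$, whose components are the $\tau^m(X,Y,\,\cdot\,)$.

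The remaining configurations, where $W,X,Y$ all lie in a single irreducible summand (and $U,V\in\V_m M$), are the delicate ones: here the cyclic sum in the Bianchi identity can no longer be shortened by pair-symmetry. For a horizontal summand $\H_\alpha M$ I would exploit $\k_\alpha=\so(\H_\alpha M)\cap\k\neq0$ and the vanishing, proved as in Lemma \ref{taum}, of the relevant $\k$-invariant components of $\nabla^\tau R^\tau$. For a vertical summand $\V_j M$ the essential input is the defining property $\so(\V_j M)\cap\k=0$ of the vertical distribution --- the one point where the special nature of $\V M$, as opposed to an arbitrary $\nabla^\tau$-parallel distribution, really enters. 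I expect assembling this last case to be the \emph{main obstacle}.

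Granting $\nabla^\tau S=0$, the second assertion is then formal. Let $F$ be a fibre of the standard submersion. By Lemma \ref{tot-geod} it is totally geodesic, and since $\nabla^\tau$ preserves $\V M$ it restricts to the metric connection $\nabla^F=\nabla^{g|_F}+\tau^\v|_F$ on $F$ (as already noted before the statement), whose torsion $T^F=2\tau^\v|_F$ is $\nabla^F$-parallel because $\nabla^\tau\tau^\v=0$. As $\nabla^\tau$ restricts to $\nabla^F$, one has $R^{\nabla^F}_{U,V}W=R^\tau_{U,V}W$ for vertical vector fields $U,V,W$ tangent to $F$, so $R^{\nabla^F}$ is the restriction to $F$ of $S=\pi\circ R^\tau$ and therefore $\nabla^FR^{\nabla^F}=(\nabla^\tau S)|_F=0$. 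Thus $\nabla^F$ has parallel curvature and parallel skew-symmetric torsion, and the Ambrose--Singer theorem (cf. Example \ref{nr}) shows that $F$ is locally a naturally reductive homogeneous space.
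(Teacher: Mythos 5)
Your reduction of the statement to the vanishing of $(\nabla^\tau_W R^\tau)(X,Y,U,V)$ for $U,V\in\V M$, and your final paragraph deducing local natural reductivity of the fibres via Ambrose--Singer, are both fine and agree with the paper. But the heart of the proposition --- the parallelism of $\pi_{\so(\v)}\circ R^\tau$ --- is not actually proved in your proposal. You explicitly leave open the case where $W,X,Y$ lie in a single irreducible summand (``I expect assembling this last case to be the main obstacle''), and this is precisely the case where the defining property $\so(\v_j)\cap\k=0$ of the vertical summands must do the work; without it the statement is false for an arbitrary parallel distribution. Moreover, the strategy you sketch for the single-summand cases is circular: you propose to kill ``the relevant $\k$-invariant components of $\nabla^\tau R^\tau$'' by an argument as in Lemma \ref{taum}, but that lemma only applies to $\k$-invariant, i.e.\ $\nabla^\tau$-parallel, sections, and $\nabla^\tau R^\tau$ is not known to be parallel --- its partial vanishing is exactly what is being proved. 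The second Bianchi identity with torsion, which you take as the main tool, does not by itself encode the hypothesis $\so(\v_j)\cap\k=0$, so some genuinely new input is needed at this point.

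The paper's mechanism is different and entirely algebraic. By \cite[Thm.~4.4]{c-swann}, the condition $\so(\v_j)\cap\k=0$ for all $j$ forces the space $\mathcal{K}(\k)$ of algebraic curvature tensors with values in $\k$ (i.e.\ elements of $\Sym^2(\k)$ in the kernel of the first Bianchi map $\mathfrak{b}$) to lie in $\Sym^2(\so(\hh))$; in particular its $\so(\v)$-projection vanishes. Writing $R^\tau=R^\tau_0+R^\tau_1$ according to $\Sym^2(\k)=\mathcal{K}(\k)\oplus\mathcal{K}(\k)^\perp$, the relation \eqref{rgt} together with the first Bianchi identity for $R^g$ gives $\mathfrak{b}(R^\tau)=-\mathfrak{b}(\tau^2)$, hence $R^\tau_1=-\mathfrak{b}^{-1}\mathfrak{b}(\tau^2)$ is an algebraic ($\k$-equivariant) expression in the parallel tensor $\tau$, so $\nabla^\tau R^\tau_1=0$; and $\pi_{\so(\v)}\circ R^\tau=\pi_{\so(\v)}\circ R^\tau_1$ by the inclusion above. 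So the parallel object is exhibited directly, rather than obtained by differentiating and case-checking the second Bianchi identity. If you want to salvage your route, you would have to supply, for the single-summand configurations, an argument that genuinely uses $\so(\v_j)\cap\k=0$ --- in effect re-proving the Cleyton--Swann algebraic statement --- which is the missing idea in your proposal.
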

\begin{proof}
In order to keep the notation simple, we will identify here $\hol$-representations with the associated bundles over $M$, and notice that every $\hol$-invariant object corresponds to a $\nabla^\tau$-parallel section on $M$.
Since $\so(\v_j)\cap\hol$ is trivial for each
irreducible summand $\v_j$ of $\v$, \cite[Thm. 4.4]{c-swann} shows by immediate induction on the number of components $\v_j$ that the space $\mathcal{K}(\hol)$ of algebraic curvature tensors with values in $\hol$ satisfies
\begin{equation}\label{inc}
  \mathcal{K}(\hol)\subset \Sym^2(\hol)\cap\Sym^2(\so(\hh))\subset
  \Sym^2(\so(\hh)).
\end{equation}
Decompose $\Sym^2(\hol)$ orthogonally into $ \mathcal{K}(\hol)\oplus \mathcal{K}(\hol)^\perp$ and write $R^\tau=R^\tau_0+R^\tau_1$ for the
corresponding decomposition of the curvature tensor of $\ct$.  The
Bianchi map
$\mathfrak{b} :\Sym^2(\hol)\to\Lambda^4(\hh\oplus\v)$ is of course $\hol$-invariant. Since its kernel is $\mathcal{K}(\hol)$, there exists a $\hol$-invariant isomorphism called $\mathfrak{b}^{-1}$ from $\mathfrak{b}(\Sym^2(\hol))$ to $\mathcal{K}(\hol)^\perp$, and 
\eqref{rgt} shows that
$R^\tau_1=- \mathfrak{b}^{-1} \mathfrak{b}(\tau^2)$. Consequently, $\ct R^\tau_1=0$ 
since $\tau$  is $\nabla^\tau$-parallel.  

By \eqref{inc} we 
have $\pi_{\so(\v)}\circ R^\tau=\pi_{\so(\v)}\circ R^\tau_1$, where  $\pi_{\so(\v)}\colon \so(\hh\oplus\v)\to \so(\v)$ is the standard projection.  In particular, this shows that
\be\label{par}\ct (\pi_{\so(\v)}\circ R^\tau)=0.\ee

Restricting this equation to $F$, shows that $\nabla^F$
is a metric connection for which both the (skew-symmetric) torsion $T^F$ and
curvature $R^F$ are parallel, so by the Ambrose-Singer theorem \cite{ambrose} $F$ is (locally) a naturally reductive homogeneous space.
\end{proof}

\begin{ere}\label{stdecrem} If the summand $\V M$ in the standard decomposition is trivial (i.e. $\V M=0$), Lemma \ref{cr} and Lemma \ref{tors} show that $(M,g)$ is locally a product of irreducible geometries with torsion. By \cite{c-swann}, each factor is either naturally reductive homogeneous, or has a nearly K\"ahler structure in dimension 6, or a nearly parallel $\G_2$-structure in dimension 7. If $\H M=0$, then $(M,g)$ is locally a naturally reductive homogeneous space by Proposition \ref{rparallel1}. However, the converse is not always true. One can construct naturally reductive homogeneous spaces (e.g. Berger spheres) whose summands $\H M$ and $\V M$ are both non-vanishing (see \cite{storm2}, \cite{storm1} for more details). Note also that for a given reducible geometry with parallel skew-symmetric torsion  (like Sasakian, 3-Sasakian, naturally reductive homogeneous), it is not an easy task to determine explicitly the standard decomposition, since the holonomy of the connection with parallel skew-symmetric torsion is not known in general.
\end{ere}

By the previous remark, since we are interested to study geometries with parallel skew-symmetric torsion other than the homogeneous connection on naturally reductive homogeneous spaces, we will assume from now on that the standard decomposition $\T M=\H M\oplus \V M$ is non-trivial.

\section{Geometries with parallel curvature}\label{5}

We have seen that the base space $N$ of the standard submersion (Definition \ref{stsub})
$M\to N$ of a manifold $M$ with parallel skew-symmetric torsion carries again a geometry with parallel skew-symmetric
torsion. In this section we will show that this geometry carries additional structure.

\subsection{Connections on principal bundles}\label{s41}
For the convenience of the reader we collect here some notation and well known formulas about principal bundles and connections.

Let $K$ be a Lie group, $M$ a manifold of dimension $n$ and $\pi: Q \rightarrow M$ a $K$-principal fibre bundle over $M$. We consider a connection $1$-form
$\alpha \in \Omega^1(Q, \k)$ with curvature form $\Omega^\alpha \in \Omega^2(Q, \k)$, defined as the horizontal part of $\d \alpha$. 
The curvature form is given by 
\beq\label{str1}
\Omega^\alpha  = \d \alpha + \tfrac12 \alpha \wedge \alpha \ ,
\eeq
where $(\alpha\wedge \alpha)(U_1,U_2) := 2[\alpha(U_1),\alpha(U_2)]$ for $U_1,U_2 \in \T Q$.

We define the horizontal and vertical distributions on $Q$ by $\T^{hor}_uQ:=\ker(\alpha_u)$ and $\T^\k_uQ:=\ker(\pi_*)$, so that $\T Q=\T^{hor}Q\oplus \T^\k Q$. A vector field $X$ on $M$ induces a unique vector field $\tilde X$ on $Q$ tangent to $\T^{hor}Q$ such that $X$ and $\tilde X$ are $\pi$-related. This vector field is called the horizontal lift of $X$.

If $V$ is a representation space of $K$ and $VM$ is the associated vector bundle, every element $u\in Q$ defines tautologically a linear isomorphism between $V$ and $VM_{\pi(u)}$ and every
section $\sigma$ of $VM$ determines a $V$-valued function $\hat \sigma$ on $Q$ defined by 
\be\label{hut}\hat \sigma(u):=u^{-1}\sigma_{\pi(u)}.\ee 
The covariant derivative on $VM$ induced by $\alpha$ satisfies
\be\label{nablatauv} (\nabla^\alpha_X\sigma)_{\pi(u)} = u(\tilde X(\hat \sigma)), \qquad\forall u\in Q,\ \forall X\in \T_{\pi(u)}M\ ,\ee
and the curvature tensor of $\nabla^\alpha$ defined by 
$$R^\alpha_{X,Y}\sigma:=\nabla^\alpha_X\nabla^\alpha_Y\sigma-\nabla^\alpha_Y\nabla^\alpha_X\sigma-\nabla^\alpha_{[X,Y]}\sigma$$
for every vector fields $X,Y$ on $M$, is related to the curvature form $\Omega^\alpha$ by the classical formula
\be\label{ral}
\Omega^\alpha_u(\tilde X, \tilde Y)= u^{-1}\circ R^\alpha_{X, Y}\circ u,\qquad\forall u\in Q,\ \forall X,Y\in \T_{\pi(u)}M \ .
\ee

\subsection{The geometry of the standard submersion}\label{gsts}

Let us now return to a geometry with parallel skew-symmetric torsion $(M,g^M,\tau)$, endowed with the connection $\nabla^\tau=\nabla^{g^M}+\tau$.
We denote with $\Hol\subset \SO(n)$ the holonomy group of $\nabla^\tau$ at some point of the orthonormal frame bundle of $M$, and with $\hol$ its Lie algebra.
We consider like before the standard decomposition $\T M = \H M \oplus \V M$ of the tangent bundle of $M$ (which is a $\nabla^\tau$-parallel and orthogonal splitting) and  denote correspondingly by $\RM^n = \hh  \oplus \v$ the $\hol$-invariant decomposition of $\RM^n$. 
From Remark \ref{stdecrem} we can assume that both $\v$ and $\hh$ are non-zero. For every $X\in\T M$ we denote by $X^\H$ and $X^\V$ its orthogonal projections to $\H M$ and $\V M$. Note that the restriction to $\V M$ of the metric $g^M$ induces a scalar product on $\v$.

The connection $\nabla^\tau$ induces a metric connection on the Euclidean vector bundle $\V M$.
We denote by $K\subset\SO(\v)$ the holonomy group of this connection at some orthonormal frame $u:\v\to \V M_x$ of $\V M$, by $\k$ its Lie algebra, by $\pi_M:Q\to M$ the holonomy bundle through $u$, and by $\alpha\in\Omega^1(Q,\k)$ its connection 1-form. Correspondingly, the restriction of $\nabla^\tau$ to $\V M$ will be denoted from now on by $\nabla^\alpha$.

Any Lie algebra element  $A \in \k$ induces a vertical vector field $A^*$ on $Q$ defined
at $u\in Q$ by $ A^*_u := \left.\frac{d}{dt}\right|_{t=0}(u \cdot \exp(tA))$. By definition of the connection we
have $\alpha(A^*)=A$. The vector fields $A^*$ are called {\it fundamental vertical vector fields}. 
It is easy to check that for $A,B \in \k$ we have 
\be\label{br}[A^*,B^*]=[A,B]^*\ .\ee

Since $Q$ is the frame bundle of a sub-bundle of $\T M$, it carries a $1$-form $\theta \in \Omega^1(Q, \v)$ (reminiscent of the soldering form) defined by  
$\theta_u(U) := u^{-1}({\pi_M}_* U)^{\V}$ for every $u\in Q$, (viewed as linear isomorphism
$u : \v \rightarrow \V M_{\pi_M(u)}$) and for every $U\in \T_u Q$. Moreover, every vector $\xi\in\v$ induces a vector field $\xi^*$ on $Q$ defined at $u \in Q$ by $\xi^*_u := \widetilde{u\xi}$ (the horizontal lift of $u\xi\in\V M$ at $u$).
Note that $\theta(\xi^*) = \xi$. The vector field $\xi^*$ is called {\it standard horizontal vector field}. We claim that for $A \in \k$ and $\xi \in \v$ we have 
\be\label{br1}[A^\ast, \xi^\ast] = (A.\xi)^\ast\ ,\ee
where the dot denotes the action of $\k\subset\so(\v)$ on $\v$.
Indeed, denoting by $a_t=\exp(tA)$, we can write for every $u\in Q$:
$$ [A^\ast, \xi^\ast]_u=- \left.\frac{d}{dt}\right|_{t=0}{R_{a_t}}_*(\xi^*_{ua_t^{-1}})=-\left.\frac{d}{dt}\right|_{t=0}{R_{a_t}}_*(\widetilde{ua_t^{-1}\xi})_{ua_t^{-1}}=-\left.\frac{d}{dt}\right|_{t=0}(\widetilde{ua_t^{-1}\xi})_{u}=(A.\xi)^*_u.$$

We will now show that $\theta$ satisfies a structure equation similar to the one relating the torsion of a linear connection with the connection and soldering forms. 

\begin{elem} The $2$-form $\Theta \in \Omega^2(Q, \v)$ defined at any point $u\in Q$ by 
\beq\label{Theta}\Theta_u(U,V):=2u^{-1}(\tau({\pi_M}_*U,{\pi_M}_*V)^\V),\qquad\forall\ U,V\in\T_uQ\eeq
is related to the connection form $\alpha$ and the soldering-like form $\theta$ by
\beq\label{str2}
\Theta = \d\theta + \alpha \wedge \theta \ .
\eeq
where $(\alpha \wedge \theta)(U,V) := \alpha(U) .\theta(V)- \alpha(V) .\theta(U)$.
\end{elem}

\begin{proof}
By bilinearity and tensoriality, it suffices to prove \eqref{str2} when each entry is either a horizontal lift of a section of $\H M$ or $\V M$, or a fundamental vertical vector field. Let $X,Y$ be sections of $\H M$, $Z,W$ sections of $\V M$ and $A,B\in \k$. We denote by $\hat Z$ the $\v$-valued function on $Q$ induced by the section $Z$ of $\V M$ as in \eqref{hut}. Since $\theta(\tilde X)=\theta(\tilde Y)=\theta(A^*)=\theta(B^*)=0$, and $\theta(\tilde Z)=\hat Z$, $\theta(\tilde W)=\hat W$, we have for every $u\in Q$:
\bea(\d\theta + \alpha \wedge \theta)_u(\tilde X,\tilde Y)&=&\d\theta_u(\tilde X,\tilde Y)=-\theta_u([\tilde X,\tilde Y])=-u^{-1}([X,Y]^\V)\\
&=&-u^{-1}((\nabla^{g^M}_XY-\nabla^{g^M}_YX)^\V)=-u^{-1}((\nabla^\tau_XY-\nabla^\tau_YX-2\tau(X,Y))^\V)\\
&=&u^{-1}((2\tau(X,Y))^\V)=\Theta_u(\tilde X,\tilde Y)\ ,
\eea
as $(\nabla^\tau_XY)^\V=(\nabla^\tau_YX)^\V=0$. Using \eqref{nablatauv}, we obtain:
\bea(\d\theta + \alpha \wedge \theta)_u(\tilde X,\tilde Z)&=&\d\theta_u(\tilde X,\tilde Z)=\tilde X(\theta(\tilde Z))_u-\theta_u([\tilde X,\tilde Z])=\tilde X(\hat Z)(u)-u^{-1}([X,Z]^\V)\\
&=&u^{-1}(\nabla^\tau_XZ-(\nabla^{g^M}_XZ-\nabla^{g^M}_ZX)^\V)\\&=&-u^{-1}(\nabla^\tau_XZ-(\nabla^\tau_XZ-\nabla^\tau_ZX-2\tau(X,Z))^\V)=0=\Theta_u(\tilde X,\tilde Z)\ ,
\eea
as $\tau(X,Z)^\V=0$ by Lemma \ref{tors}. Similarly, using that $\V M$ is integrable, we can write:
\bea(\d\theta + \alpha \wedge \theta)_u(\tilde W,\tilde Z)&=&\d\theta_u(\tilde W,\tilde Z)=\tilde W(\theta(\tilde Z))_u-\tilde Z(\theta(\tilde W))_u-\theta_u([\tilde W,\tilde Z])\\&=&\tilde W(\hat Z)(u)-\tilde Z(\hat W)(u)-u^{-1}([W,Z]^\V)\\
&=&u^{-1}(\nabla^\tau_WZ-\nabla^\tau_ZW-[W,Z])=u^{-1}(2\tau(W,Z))=\Theta_u(\tilde W,\tilde Z)\ ,
\eea

Since $[A^*,\tilde X]=0$ by the right-invariance of the horizontal distribution, we get
\bea(\d\theta + \alpha \wedge \theta)_u(\tilde X,A^*)&=&\d\theta_u(\tilde X,A^*)=-\theta_u([\tilde X,A^*])=0=\Theta_u(\tilde X,A^*).
\eea

Next, denoting by $a_t=\exp(tA)$, we have 
$$A^*(\theta(\tilde Z))_u=\left.\frac{d}{dt}\right|_{t=0}(\theta(\tilde Z)_{ua_t})=\left.\frac{d}{dt}\right|_{t=0}((ua_t)^{-1}Z)=-A.u^{-1}Z\ ,$$
whence
\bea(\d\theta + \alpha \wedge \theta)_u(\tilde Z,A^*)&=&\d\theta_u(\tilde Z,A^*)-\alpha(A^*).\theta_u(\tilde Z)=-A^*(\theta(\tilde Z))_u-A.u^{-1}Z=0\\&=&\Theta_u(\tilde Z,A^*)\ .
\eea

Finally, $(\d\theta + \alpha \wedge \theta)_u(A^*,B^*)=0=\Theta_u(A^*,B^*)$, thus proving the lemma.
\end{proof}

Equations \eqref{str1} and \eqref{str2} are 
called {\it first and second structure equations} of $Q$.

Our next aim is to define a Lie algebra structure on the vector space $\gg := \k \oplus \v $ induced from the Lie algebra structure on the space of vector fields on $Q$ by the
injective map $\Phi: \gg = \k \oplus \v  \rightarrow \Gamma(\T Q), \ A + \xi \mapsto A^\ast + \xi^\ast$, for $A \in \k$ and $\xi$ in $\v $. This can be viewed as an extension to non-homogeneous setting of the classical Nomizu construction for connections with parallel curvature and torsion \cite{nom}.

\begin{elem} The image of the map $\Phi$ is closed under the bracket of vector fields. 
\end{elem}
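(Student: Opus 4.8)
The plan is to compute all three types of brackets $[A^*, B^*]$, $[A^*, \xi^*]$ and $[\xi^*, \eta^*]$ for $A, B \in \k$ and $\xi, \eta \in \v$, and check that each lands again in the image of $\Phi$, i.e. in the span of fundamental vertical vector fields $C^*$ with $C\in\k$ and standard horizontal vector fields $\zeta^*$ with $\zeta\in\v$. The first two brackets are immediate from \eqref{br}: $[A^*,B^*]=[A,B]^*$ with $[A,B]\in\k$ since $\k$ is a Lie algebra, and $[A^*,\xi^*]=(A\xi)^*$ with $A\xi\in\v$ since $\v$ is $\k$-invariant. So the entire content of the lemma is the computation of $[\xi^*,\eta^*]$, which is the main obstacle.

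For the bracket of two standard horizontal vector fields, the plan is to use the structure equations. By the standard identity relating the Lie bracket of standard horizontal vector fields to the torsion and curvature forms, one has at each point $u\in Q$
\be
[\xi^*,\eta^*]_u = -(\Theta_u(\xi^*,\eta^*))^*_u - (\Omega^\alpha_u(\xi^*,\eta^*))^*_u \;+\; (\text{horizontal part}),
\ee
where the horizontal part is the standard horizontal vector field associated to $-\Theta_u(\xi^*,\eta^*)\in\RM^n$; more precisely, evaluating $\theta$ and $\alpha$ on $[\xi^*,\eta^*]$ via $\d\theta(\xi^*,\eta^*)=\xi^*(\theta(\eta^*))-\eta^*(\theta(\xi^*))-\theta([\xi^*,\eta^*])$ and the analogous Cartan formula for $\alpha$, together with \eqref{str1} and \eqref{str2} and the fact that $\theta(\xi^*)=\xi$, $\alpha(\xi^*)=0$ are constant, gives $\theta([\xi^*,\eta^*])=-\Theta(\xi^*,\eta^*)$ and $\alpha([\xi^*,\eta^*])=-\Omega^\alpha(\xi^*,\eta^*)$. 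Thus $[\xi^*,\eta^*]_u$ decomposes as the standard horizontal vector field at $u$ associated to the vector $-\Theta_u(\xi^*,\eta^*)\in\RM^n$, plus the fundamental vertical vector field at $u$ associated to $-\Omega^\alpha_u(\xi^*,\eta^*)\in\k$.

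It then remains to show that $\Theta_u(\xi^*,\eta^*)\in\v$ and that $\Omega^\alpha_u(\xi^*,\eta^*)\in\k$. The latter is automatic since $\Omega^\alpha$ is a $\k$-valued form. For the former, by \eqref{ThetaT} we have $\Theta_u(\xi^*,\eta^*)=u^{-1}(T^\tau(u\xi,u\eta))=u^{-1}(2\tau(u\xi,u\eta))$, and since $u\xi,u\eta$ lie in $\V M$ at $\pi_M(u)$, Lemma \ref{tors} (the projection of $\tau$ onto $\Lambda^2\V M\otimes\H M$ vanishes, so $\tau_V$ preserves $\V M$ for $V\in\V M$) shows that $\tau(u\xi,u\eta)=\tau_{u\xi}(u\eta)\in\V M$, hence $u^{-1}(T^\tau(u\xi,u\eta))\in\v$. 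This completes the verification that all brackets of elements of $\Phi(\ll)$ lie in $\Phi(\ll)$. I expect the only subtlety to be bookkeeping: making sure the decomposition of $[\xi^*,\eta^*]$ into its horizontal and vertical parts is applied correctly pointwise, and invoking Lemma \ref{tors} at the right place to control the $\RM^n$-component.
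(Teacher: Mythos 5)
Your computation of $\theta([\xi^*,\eta^*])=-\Theta(\xi^*,\eta^*)$ and $\alpha([\xi^*,\eta^*])=-\Omega^\alpha(\xi^*,\eta^*)$, and the use of Lemma \ref{tors} to see that the $\RM^n$-component lies in $\v$, both match the paper. But there is a genuine gap: what you prove is only that $[\xi^*,\eta^*]_u$ lies, at each point $u$, in the pointwise span $\T^\v Q_u\oplus\T^\k Q_u$ of the image of $\Phi$, i.e. that the distribution spanned by $\Phi(\ll)$ is involutive. The image of $\Phi$ is the finite-dimensional space of vector fields $\{A^*+\zeta^*\ :\ A\in\k,\ \zeta\in\v\}$ with \emph{fixed} $A$ and $\zeta$; for $[\xi^*,\eta^*]$ to belong to it one must show that the $\RM^n$-valued function $u\mapsto\Theta_u(\xi^*,\eta^*)$ and the $\k$-valued function $u\mapsto\Omega^\alpha_u(\xi^*,\eta^*)$ are \emph{constant} on $Q$, so that the bracket equals $-T(\xi,\eta)^*-R_1(\xi,\eta)^*$ for fixed elements $T(\xi,\eta)\in\v$ and $R_1(\xi,\eta)\in\k$. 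This constancy is exactly what the subsequent definition \eqref{las} of the Lie bracket on $\ll$ requires, and it is where all the geometric hypotheses enter: the paper uses $\nabla^\tau\tau=0$ together with \eqref{1} to see that $\Theta(\zeta_1^*,\zeta_2^*)$ is constant along horizontal curves, and Proposition \ref{rparallel1} (parallelism of the $\Lambda^2\V M$-projection of $R^\tau$) combined with the pair symmetry of Lemma \ref{symm} and \eqref{2} for the curvature term $\Omega^\alpha(\xi_1^*,\xi_2^*)$ with $\xi_1,\xi_2\in\v$; constancy on all of $Q$ then follows because $Q$ is the holonomy bundle, so any two of its points are joined by a horizontal curve. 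Your argument never invokes the parallelism of the torsion or Proposition \ref{rparallel1}, which is a sign that it cannot suffice: without them the coefficients may vary from point to point and the bracket need not be $\Phi$ of any element of $\ll$.
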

\begin{proof}
Since by \eqref{br} and \eqref{br1}, $[A^\ast, B^\ast] = [A, B]^\ast$ for every $A, B \in \k$ and $[A^\ast, \xi^\ast] = (A. \xi)^\ast$ for every $A \in \k$ and $\xi \in \v $,
it only remains to consider the bracket of the standard horizontal vector fields induced by $\xi_1, \xi_2 \in \v $. 

For every $\xi_1,\xi_2\in\v$, the first structure equation \eqref{str1}, together with \eqref{ral} applied to the vector bundle $\V M$ gives
\be\label{constant3} \alpha_u([\xi^\ast_1, \xi^\ast_2]) =- \Omega^\alpha_u (\xi_1^\ast, \xi_2^\ast)=-u^{-1}\circ R^\alpha_{u\xi_1,u\xi_2}\circ u\ ,
\ee
where $R^\alpha$ is the curvature operator of $\nabla^\alpha$, viewed as $2$-form on $M$ with values in $\Lambda^2\V M$. Since $\V M$ is totally geodesic with respect to $\nabla^\tau$ and $\nabla^\alpha$ is the restriction of $\nabla^\tau$ to $\V M$, we thus get that 
\be\label{ralp}R^\alpha=\pi_{\Lambda^2\V M}\circ R^\tau,\ee 
which is a parallel tensor by Proposition \ref{rparallel1}. 

This shows that $u\mapsto u^{-1}\circ R^\alpha_{u\xi_1,u\xi_2}\circ u$ is  constant along horizontal curves in $Q$, thus constant on $Q$ since every two points of $Q$ can be joined by a horizontal curve.
Consequently, there exists an element $R\in\Lambda^2\v\otimes \k\subset \Lambda^2\v\otimes\so(\v)$ such that 
\be\label{rco1}
u^{-1}\circ R^\alpha_{u\xi_1,u\xi_2}\circ u= R(\xi_1, \xi_2)
 \qquad \forall\ \xi_1,\xi_2\in\v,\ \forall\ u\in Q\ .
\ee

Moreover, the second structure equation \eqref{str2}, together with the fact that $\alpha(\xi_i^*)=0$ and $\theta(\xi_i^*)=\xi_i$ are constant functions for $i=1,2$, implies:
\be\label{constant1}
\theta_u([\xi^\ast_1, \xi^\ast_2])=-\d\theta_u(\xi^\ast_1, \xi^\ast_2)= - \Theta_u(\xi^\ast_1, \xi_2^\ast)=-2u^{-1}\tau(u\xi_1,u\xi_2)\ .
\ee

Like before, since $\tau$ is $\nabla^\tau$-parallel, there exists an element $T \in \Lambda^3\v $ such that 
\be\label{thc}
u^{-1}\tau(u\xi_1,u\xi_2)= T(\xi_1, \xi_2),\qquad\forall\ \xi_1,\xi_2\in\v,\ \forall\ u\in Q\ .
\ee

The above equations \eqref{constant3}--\eqref{thc} yield
\be\label{constant2}
[\xi^\ast_1, \xi^\ast_2] =-2T(\xi_1, \xi_2)^\ast- R(\xi_1, \xi_2)^\ast \ ,
\ee
thus proving the lemma.
\end{proof}

Hence we  can define a Lie bracket on $\gg = \k \oplus \v $ extending the one on $\k$ by
\be\label{las}
[A, \xi] :=   A. \xi \qquad \mbox{and}\qquad  [\xi_1, \xi_2]   := -2T(\xi_1, \xi_2) - R(\xi_1, \xi_2) \ ,
\ee
which satisfies the Jacobi identity since the map $\Phi:\gg\to \Gamma(\T Q)$ is injective. In particular, the following relation, analogous to \eqref{br} and \eqref{br1} holds for every $\xi_1,\xi_2\in\v$:
\be\label{br2} [\xi^\ast_1, \xi^\ast_2] =[\xi_1, \xi_2]^*\ .\qquad 
\ee

The metric $g^M$ restricted to $\V M$ induces a $\k$-invariant scalar product $\langle\cdot,\cdot\rangle_\v$ on $\v$. For any element $B=A+\xi\in\gg$ we denote by $B_\v:=\xi$ its component in $\v$. Since $T$ is skew-symmetric, Equation \eqref{las} yields for every $\xi_1,\xi_2,\xi_3\in \v $:
\be\label{natr}\langle [\xi_1,\xi_2]_\v,\xi_3\rangle_\v+\langle \xi_2,[\xi_1,\xi_3]_\v\rangle_\v=0\ .\ee

The decomposition $\T Q=\T^{hor} Q\oplus \T^\k Q$ of the tangent bundle of $Q$ given by the connection $\alpha$ can be refined as
$$\T Q=\T^\hh Q\oplus \T^\v Q\oplus \T^\k Q\ ,$$
where $\T^\hh Q_u=\{\tilde X_u\ |\ X\in \H M_{\pi_M(u)}\}$, $\T^\v Q_u=\{\xi^*_u\ |\ \xi\in \v\}$ and $\T^\k Q_u=\{\A^*_u\ |\ A\in \k\}$.

The map $\Phi : \gg \rightarrow \Gamma(\T Q)$ is by definition of the  Lie algebra structure on $\gg$ 
a Lie algebra homomorphism, i.e. it defines on $Q$ an infinitesimal $\gg$-principal bundle structure  over some locally defined manifold $N$, whose fibers are the leaves of the integrable distribution $\Phi(\gg)=\T^\v Q\oplus \T^\k Q$ of $Q$. Since $(\pi_M)_*^{-1}(\V M)=\T^\v Q\oplus \T^\k Q$, this locally defined manifold $N$ is the same as the basis of the standard submersion $\pi:M\to N$ introduced in the previous section. 

\begin{ere}\label{defp}
In order to distinguish the two principal bundles over $M$ and $N$ having the same total space $Q$, we will refer from now on to the newly constructed  infinitesimal $\gg$-principal bundle as $\pi_N:P\to N$. As differentiable manifolds, we thus have $P=Q$, and the three submersions $\pi_M,\pi_N$ and $\pi$, are related by $\pi_N=\pi\circ\pi_M$.
\end{ere}

Lemma \ref{tot-geod} shows that the metric of $M$ projects to a metric $g^N$ on $N$ with Levi-Civita covariant derivative denoted by $\nabla^{g^N}$. Moreover,
Lemma \ref{projectable} shows that the horizontal part $\tau^\hh$ of $\tau$ projects to a $3$-form $\sigma$ on $N$ defining a covariant derivative $\nabla^\sigma:=\nabla^{g^N}+\sigma$ with parallel skew-symmetric torsion. 

We will now introduce a connection on the principal bundle $P$ over $N$. 

\begin{elem}\label{defc}
The $1$-form $\gamma:= \alpha + \theta  \in \Omega^1(P, \gg)$ is a connection form on $P$ with respect to the infinitesimal
$\gg$--principal bundle structure, i.e. it satisfies  $\gamma(B^*)=B$ for every $B\in \gg$ and 
\be\label{binv}
(\LieD_{B^*} \gamma )(U) =  - [B, \gamma(U)],\qquad \forall\ B\in \gg,\ \forall \ U\in\Gamma(\T P)\ .
\ee
\end{elem}
\proof
The relation $\gamma(B^*)=B$ is tautological from the definition of the infinitesimal action of $\gg$ on $P$. Indeed, if $B=A+\xi\in \k\oplus \v $ then 
$$\gamma(B^*)=(\alpha+\theta)(A^*+\xi^*)=\alpha(A^*)+\theta(\xi^*)=A+\xi=B.$$

By definition, the connection 1-form $\alpha$ of the $K$-principal bundle $Q$ over $M$ is $K$-equivariant, i.e. $R_a^*\alpha=\Ad_{a^{-1}}(\alpha)$ for every $a\in K$. Differentiating this at the identity we get
\be\label{ainv} (\LieD_{A^*} \alpha )(U) =  - [A, \alpha(U)],\qquad \forall\ A\in \k,\ \forall \ U\in\Gamma(\T P)\ .
\ee

Moreover, since $A^*\lrcorner \theta=0$ and $A^*\lrcorner \Theta=0$ for every $A\in\k$, the Cartan formula together with \eqref{str2} yields $ \LieD_{A^*} \theta= A^*\lrcorner \d\theta=-A^*\lrcorner(\alpha\wedge \theta)=-A.\theta$, whence
\be\label{tinv} (\LieD_{A^*} \theta)(U) = -A.\theta(U)=-[A,\theta(U)],\qquad \forall\ A\in \k,\ \forall \ U\in\Gamma(\T P)\ .
\ee

Equation \eqref{binv} follows from \eqref{ainv} and \eqref{tinv} when $B\in\k$. It remains to check it when $B=\xi$ is a vector in $\v $. By tensoriality and linearity, it is sufficient to consider three cases: when $U=A^*$ for some $A\in \k$, $U=\eta^*$ for $\eta\in \v $ and $U=\tilde X$ for $X\in\Gamma(\H M)$. Since $\gamma(U)$ is constant in each of these cases, we get (using the structure equations \eqref{str1} and \eqref{str2} in the last case):
\bea(\LieD_{\xi^*} \gamma )(A^*)&=&-\gamma([\xi^*,A^*])=-\gamma([\xi,A]^*)=-[\xi,A]=-[\xi,\gamma(A^*)]\\
(\LieD_{\xi^*} \gamma )({\eta}^*)&=&-\gamma([\xi^*,{\eta}^*])=-\gamma([\xi,{\eta}]^*)=-[\xi,\eta]=-[\xi,\gamma(\eta^*)]\\
(\LieD_{\xi^*} \gamma )(\tilde X)&=&-\gamma([\xi^*,\tilde X])=\d\gamma(\xi^*,\tilde X)=(\Omega^\alpha-\tfrac12\alpha\wedge\alpha+\Theta-\alpha\wedge\theta)(\xi^*,\tilde X)\\
&=&(\Omega^\alpha+\Theta)(\xi^*,\tilde X)\ .
\eea
As $\gamma(\tilde X)=0$, we have $[\xi,\gamma(\tilde X) ]=0$, and it remains to check that $(\Omega^\alpha+\Theta)(\xi^*,\tilde X)=0$.

For every $u\in Q$ we denote by $W:=u(\xi)\in \V M$. Then by definition $\tilde W_u=\xi^*_u$, so using \eqref{ral} and \eqref{Theta} we infer
$$\Omega^\alpha(\xi^*,\tilde X)_u+\Theta(\xi^*,\tilde X)_u=u^{-1}\circ R^\alpha_{W,X}\circ u+2u^{-1}\tau(W,X)^\V\ .$$
Since $X\in \H M$ and $W\in \V M$, the right hand side term vanishes by \eqref{ralp}, the pair symmetry of $R^\tau$ (Lemma \ref{symm}) and the fact that $\tau$ has no component in $\Lambda^2\V M\otimes \H M$ (Lemma \ref{tors}).
\qed

\medskip

As $\gamma = \alpha + \theta$, \eqref{str1} and \eqref{str2} yield  the following decomposition of the curvature form $\Omega^\gamma$:
\be\label{oba}
\Omega^\gamma = \d\gamma+\tfrac12\gamma\wedge\gamma= (\d\alpha + \tfrac12  \alpha \wedge \alpha) + (\d\theta + \alpha \wedge \theta)+
\tfrac12 (\theta \wedge \theta)
=\Omega^\alpha+\Theta+\tfrac12 (\theta \wedge \theta) \ .
\ee

\subsection{Geometries with parallel curvature}\label{reduc}
We will now show that the curvature $R^\gamma$ of the connection $\gamma$ on the principal bundle $P$ over $N$, viewed as a 2-form with values in the adjoint bundle $\ad(P)$, is parallel with respect to the tensor product connection $\nabla^\sigma\otimes \nabla^\gamma$.

\begin{epr} \label{corrpa}
The section $R^\gamma$ of $\Lambda^2 \T N\otimes\ad(P)$ is parallel with respect to $\nabla^\sigma\otimes\nabla^\gamma$.
\end{epr}

\begin{proof} For every vector fields $Y,Z\in \Gamma(\T N)$ \eqref{ral} reads
\be\label{sts}u\Omega^\gamma(\tilde Y,\tilde Z)=R^\gamma(Y,Z),\qquad\forall u\in P\ ,
\ee
where here $u\in P$ is seen as an isomorphism from $\gg$ to the fibre of $\ad(P)$ at $\pi_N(u)\in N$.

Using  \eqref{nablatauv} and \eqref{sts} we thus get for every vector fields $X,Y,Z$ on $N$ and $u\in P$:
\beaa( (\nabla^\sigma\otimes\nabla^\gamma)_XR^\gamma)(Y,Z)&=&\nabla^\gamma_X(R^\gamma(Y,Z))-R^\gamma(\nabla^\sigma_XY,Z)-R^\gamma(Y,\nabla^\sigma_XZ)\label{rpa}\\
&=&u\left(\tilde X(\Omega^\gamma(\tilde Y,\tilde Z))-\Omega^\gamma(\widetilde{\nabla^\sigma_X Y},\tilde Z)-\Omega^\gamma(\tilde Y,\widetilde{\nabla^\sigma_X Z}) \right)\nonumber\ .
\eeaa

For every vector field $X$ on $N$ we denote by $X_M$ its horizontal lift to a vector field on $M$ with respect to the standard submersion $\pi:M\to N$. It is clear that
the horizontal lifts of $X$ to $(P,\gamma)$ and of $X_M$ to $(Q,\alpha)$ coincide. Thus we may write
$\tilde X_M = \tilde X$ for this horizontal lift. 

From \eqref{oba}, we have $\Omega^\gamma=\Omega^\alpha+\Theta+\tfrac12\theta\wedge\theta.$ For every  vector fields $X,Y$ on $N$ we thus have by \eqref{Theta} and  \eqref{ral}:
\be\label{ot}\Omega^\gamma(\tilde X,\tilde Y)=\Omega^\alpha(\tilde X,\tilde Y)+\Theta(\tilde X,\tilde Y)\ .
\ee

On the other hand, $R^\alpha$ and $\tau^m$ are $\nabla^\tau$ parallel tensors on $M$, so they define $\nabla^\tau\otimes\nabla^\alpha$-parallel 2-forms with values in $\ad(Q)$ and $\V M$ respectively. Moreover it follows from O'Neill's formulas that
$
\nabla^\tau_{X_M} Y_M =  (\nabla^\sigma_{X} Y)_M
$.
We thus get for every vector fields $X,Y,Z$ on $N$:
\bea 0&=&( (\nabla^\tau\otimes\nabla^\alpha)_{X_M}R^\alpha)({Y_M},{Z_M})\\&=&\nabla^\tau_{X_M}(R^\alpha({Y_M},{Z_M}))-R^\alpha(\nabla^\tau_{X_M}{Y_M},{Z_M})-R^\alpha({Y_M},\nabla^\tau_{X_M}{Z_M})\\
&=&\nabla^\tau_{X_M}(R^\alpha({Y_M},{Z_M}))-R^\alpha((\nabla^\sigma_{X}{Y})_M,{Z_M})-R^\alpha({Y_M},(\nabla^\sigma_{X}{Z})_M)
\eea
and 
\bea 0&=&( (\nabla^\tau\otimes\nabla^\alpha)_{X_M}\tau^m)({Y_M},{Z_M})\\&=&\nabla^\tau_{X_M}(\tau^m({Y_M},{Z_M}))-\tau^m(\nabla^\tau_{X_M}{Y_M},{Z_M})-\tau^m({Y_M},\nabla^\tau_{X_M}{Z_M})\\
&=&\nabla^\tau_{X_M}(\tau^m({Y_M},{Z_M}))-\tau^m((\nabla^\sigma_{X}{Y})_M,{Z_M})-\tau^m({Y_M},(\nabla^\sigma_{X}{Z})_M)\ .
\eea

As before, the $\gg$-valued function on $Q$ corresponding to the section $R^\alpha({Y_M},{Z_M})$ of $\ad(Q)$ is $\Omega^\alpha(\tilde X,\tilde Y)$, 
and  the $\v$-valued function on $Q$ corresponding to the section $\tau^m({Y_M},{Z_M})$ of $\V M$ is $\tfrac12\Theta(\tilde X,\tilde Y)$,
so by  \eqref{nablatauv}, the previous equations read:
\beaa 0&=&\tilde X(\Omega^\alpha(\tilde Y,\tilde Z))-\Omega^\alpha(\widetilde{\nabla^\sigma_X Y},\tilde Z)-\Omega^\alpha(\tilde Y,\widetilde{\nabla^\sigma_X Z}) \label{1e}\ ,\\
0&=&\tilde X(\Theta(\tilde Y,\tilde Z))-\Theta(\widetilde{\nabla^\sigma_X Y},\tilde Z)-\Theta(\tilde Y,\widetilde{\nabla^\sigma_X Z}) \label{2e}\ .
\eeaa

Clearly \eqref{ot}, \eqref{1e} and \eqref{2e} imply that the right hand side of  \eqref{rpa} vanishes, thus proving the lemma.
\end{proof}

\begin{elem} \label{crel} For every $u\in P$, $\xi_1,\xi_2\in \v $, and $X,Y \in \T N$, the following relation holds:
\be\label{1s2}g^N (R^\gamma_{u\xi_2}(X),R^\gamma_{u\xi_1}(Y))-g^N( R^\gamma_{u\xi_2}(Y),R^\gamma_{u\xi_1}(X))+\langle[u^{-1}R^\gamma_{X,Y},\xi_2]_\v,\xi_1\rangle_\v=0\ ,\ee
where $R^\gamma_{u\xi}$ is the endomorphism of $\T N$ defined  by 
\be
\la (u^{-1}R^\gamma_{X,Y})_\v, \xi \ra_\v = :g^N(R^\gamma_{u\xi} (X), Y),\qquad\forall\ X,Y\in\T N\ .
\ee

\end{elem}
\begin{proof}
By \eqref{oba} and \eqref{sts}, for every $X,Y \in \T N$ with horizontal lifts $X_M,Y_M\in\H M$, we can write:
\bea g^N(R^\gamma_{u\xi} (X), Y)&=&\la (u^{-1}R^\gamma_{X,Y})_\v, \xi \ra_\v=\la\Omega^\gamma(\tilde X,\tilde Y)_\v,\xi\ra_\v
=\la\Theta(\tilde X,\tilde Y)_\v,\xi\ra_\v\\&=&\la 2u^{-1}\tau(X_M,Y_M)^\V,\xi\ra_\v=2g^M(\tau(X_M,Y_M),u\xi)=2g^M(\tau_{u\xi}X_M,Y_M)\ ,
\eea
whence $(R^\gamma_{u\xi}X)_M=2\tau_{u\xi}X_M$. Moreover, by \eqref{thc} and \eqref{las}, we get
\bea \langle[u^{-1}R^\gamma_{X,Y},\xi_2]_\v,\xi_1\rangle_\v &=&\la[\Omega^\gamma_u(\tilde X,\tilde Y),\xi_2]_\v,\xi_1\ra_\v=\la\Omega^\alpha_u(\tilde X,\tilde Y).\xi_2,\xi_1\ra_\v+\la[\Theta_u(\tilde X,\tilde Y),\xi_2]_\v,\xi_1\ra_\v\\
&=&g^M(R^\alpha_{X_M,Y_M}u\xi_2,u\xi_1)-2\la u^{-1}\tau(u\Theta_u(\tilde X,\tilde Y),u\xi_2),\xi_1\ra_\v\\
&=&g^M(R^\tau_{X_M,Y_M}u\xi_2,u\xi_1)-4g^M(\tau(\tau(X_M,Y_M)^\V,u\xi_2),u\xi_1)\ .
\eea

Denoting the vectors $u\xi_1,u\xi_2$ in $\V M$ by $V,W$ respectively, \eqref{1s2} is thus equivalent to 
$$4(g^M(\tau_WX_M,\tau_VY_M)-g^M(\tau_WY_M,\tau_VX_M))+g^M(R^\alpha_{X_M,Y_M}W,V)-4g^M(\tau_WV,\tau_{X_M}Y_M)=0\ ,$$
which follows directly from \eqref{rt}.
\end{proof}

In view of the above results it makes sense to introduce the following:

\begin{ede} \label{pgwt}A {\em geometry with parallel curvature} $(N,g^N,\sigma,P,\gg,\gamma,\k,\v,\la\cdot,\cdot\ra_\v)$ is defined by
a Riemannian manifold $(N, g^N)$ with Levi-Civita covariant derivative $\nabla^{g^N}$, carrying a metric covariant derivative $\nabla^\sigma:=\nabla^{g^N}+\sigma$ with parallel skew-symmetric torsion $T^\sigma=2\sigma$, and a (locally defined) $G$-principal bundle $\pi_N : P \rightarrow N$ endowed with a connection form $\gamma \in \Omega^1(P, \gg)$, where $\gg$ is the Lie algebra of $G$, such that the following properties hold:
\begin{itemize}
\item[$(i)$] If $\nabla^\gamma$ denotes the covariant derivative induced by $\gamma$ on $\ad(P)$, then the section $R^\gamma$  of $\Lambda^2 \T N\otimes\ad(P)$ defined by the curvature form $\Omega^\gamma$ of $\gamma$ is parallel with respect to $\nabla^\sigma\otimes\nabla^\gamma$;
\item[$(ii)$] There exists a direct sum decomposition $\gg=\k\oplus \v$, where $\k\subset\gg$ is a Lie sub-algebra of compact type, and $\v$ is a faithful $\k$-representation with respect to the adjoint action, which carries a $\k$-invariant scalar 
product $\langle\cdot,\cdot\rangle_\v$ such that the splitting $\gg = \k \oplus \v $ is naturally reductive, i.e. $\langle [\xi_1,\xi_2]_\v,\xi_3\rangle_\v+\langle \xi_2,[\xi_1,\xi_3]_\v\rangle_\v=0$ for every $\xi_1,\xi_2,\xi_3\in \v $;
\item[$(iii)$]  For every $u\in P$, $\xi_1,\xi_2\in \v $, and $X,Y \in \T N$, the following relation holds:
\be\label{s2}g^N ([R^\gamma_{u\xi_2},R^\gamma_{u\xi_1}](X),Y)+\langle[u^{-1}R^\gamma_{X,Y},\xi_2]_\v,\xi_1\rangle_\v=0\ ,\ee
where $R^\gamma_{u\xi}$ is the skew-symmetric endomorphism of $\T N$ defined by 
\be\label{og}
\la (u^{-1}R^\gamma_{X,Y})_\v, \xi \ra_\v = :g^N(R^\gamma_{u\xi} (X), Y),\qquad\forall\ X,Y\in\T N\ .
\ee
\end{itemize}
\end{ede}

One can summarize the results of this section in the following:

\begin{ath}\label{redu}
Let $(M^n, g,\tau)$ be a geometry with parallel skew-symmetric torsion (Definition \ref{gwt}), with standard decomposition $\T M=\H M\oplus \V M$ (Definition \ref{stdec}). Then the base $N$ of the standard submersion (Definition \ref{stsub}) carries a geometry with parallel curvature (Definition \ref{pgwt}) canonically induced by the geometry of $M$.
\end{ath}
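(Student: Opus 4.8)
The plan is to assemble Theorem \ref{redu} directly from the structural results established earlier in this section, since all the hard analytic and algebraic work has already been carried out. The strategy is to check, one by one, that the data $(N,g^N,\sigma,P,\gg,\gamma,\k_1)$ constructed in §\ref{gsts} and §\ref{reduc} satisfies the three axioms of Definition \ref{pgwt}. First I would recall the construction: starting from $(M,g,\tau)$ with reducible holonomy representation (which we may assume by Remark \ref{stdecrem}), the standard decomposition $\T M=\H M\oplus \V M$ of Definition \ref{stdec} and the standard submersion $M\to N$ of Definition \ref{stsub} give a Riemannian base $(N,g^N)$ by Lemma \ref{tot-geod}, a projected $3$-form $\sigma$ with $\nabla^\sigma=\nabla^{g^N}+\sigma$ of parallel skew-symmetric torsion by Lemma \ref{projectable} and Remark \ref{r411}, the $G$-principal bundle $P=Q\times_\lambda G$ of \eqref{defp} with connection $\gamma$ from \eqref{bg}, and the subalgebra $\k_1=\lambda_*(\k)\subset\gg$ of \eqref{k11}.

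Next I would verify the three conditions. Condition $(i)$, parallelism of $R^\gamma$ with respect to $\nabla^\sigma\otimes\nabla^\gamma$, is precisely Corollary \ref{corrpa}, so nothing further is needed there. For condition $(ii)$ I would argue that $\k_1$ is of compact type because it is the image under $\lambda_*$ of the holonomy algebra $\k$ of a metric connection, hence a subalgebra of $\so(n)$, which is of compact type, and the $\k$-invariant scalar product on $\ll=\k\oplus\v$ descends to a $\k_1$-invariant scalar product on $\gg=\k_1\oplus\v$; faithfulness of the isotropy representation of $\k_1$ on $\v=\k_1^\perp$ follows from the definition of $\v$ as the sum of the irreducible $\k$-summands $\v_j$ with $\so(\v_j)\cap\k=0$, together with the fact that $\rho:K\to\SO(\v)$ is the honest restriction of the standard representation, so its differential $\rho_*$ is injective on $\k/\ker\rho_*$ and $\k_1=\rho_*(\k)$ acts faithfully; finally the naturally reductive property $\langle[\xi_1,\xi_2],\xi_3\rangle+\langle\xi_2,[\xi_1,\xi_3]\rangle=0$ for $\xi_i\in\v$ is exactly \eqref{natr}, noting that the bracket on $\gg$ restricted to $\v$ differs from that on $\ll$ only by the $\k_1$-valued term, which is irrelevant for this identity since $\xi_3\in\v$.

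For condition $(iii)$ I would translate the identity \eqref{s2} via the principal bundle morphism $f:Q\to P$. By \eqref{bg} we have $f^*\Omega^\gamma=\lambda_*\Omega^\beta$, and $f_*$ maps $\T^\hh Q_u$ isomorphically onto $\T^{hor}P_{f(u)}$, so the endomorphisms $R^\gamma_{u\xi}$ of $\T N$ pull back to the endomorphisms $\Omega^\beta_\xi$ of $\T^\hh Q$ (using that $\pi_M^*g^M$ restricted to $\T^\hh Q$ is the pullback of $g^N$). Under this identification, formula \eqref{s2} becomes exactly the identity proved in Lemma \ref{crel}; one only needs to check that the bracket appearing there, taken in $\gg$, matches the bracket in $\ll$ modulo $\k_1$-components that pair trivially against $\xi_1\in\v$ under $\langle\cdot,\cdot\rangle$ — but since the pairing is with $\xi_1\in\v$ and $\langle\cdot,\cdot\rangle$ makes $\k_1\oplus\v$ orthogonal, only the $\v$-component of $[\Omega^\gamma(U,V),\xi_2]$ contributes, and this is unaffected by passing from $\ll$ to $\gg$. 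I would close by remarking that all the objects produced are canonically associated to $(M,g,\tau)$ and the choice of base point $u$, completing the proof.

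The main obstacle, such as it is, is purely bookkeeping: one must be careful that the various scalar products, brackets, and horizontal lifts on $Q$, on $P$, and on $\ll$ versus $\gg$ are matched up correctly under the morphisms $f$ and $\lambda$, and in particular that the quotient Lie algebra structure on $\gg=\ll/\ker\lambda_*$ does not introduce spurious terms in the identities \eqref{natr} and the one of Lemma \ref{crel}. Since $\ker\lambda_*\subset\k$ and all the relevant identities are evaluated by pairing against elements of $\v$ (which is complementary to $\k_1$ and on which $\ker\lambda_*$ is irrelevant), this causes no real difficulty, but it is the one place where a careless argument could go wrong.
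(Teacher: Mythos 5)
Your proposal is correct and follows essentially the same route as the paper: the theorem is proved by assembling the data $(g^N,\sigma,P,\gg,\gamma,\k_1)$ already constructed in §\ref{gsts} and §\ref{reduc} and checking conditions $(i)$--$(iii)$ of Definition \ref{pgwt} via Corollary \ref{corrpa}, Equation \eqref{natr} and Lemma \ref{crel}, exactly as you do. Your additional bookkeeping for condition $(ii)$ (compact type of $\k_1$, transfer of the scalar product to $\gg$, faithfulness of the tautological action of $\k_1\subset\so(\v)$) correctly fills in details the paper leaves implicit.
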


\begin{proof} Consider the (locally defined) standard submersion $\pi:M\to N$. From Lemma \ref{tot-geod}, there exists a unique Riemannian metric $g^N$ on $N$ making $\pi$ into a Riemannian submersion. Lemma \ref{projectable} and Remark \ref{r411} show that there exists a unique $3$-form $\sigma$ on $N$ such that $\pi^*\sigma=\tau^\hh$, and the connection $\nabla^\sigma=\nabla^{g^N}+\sigma$ has parallel skew-symmetric torsion $T^\sigma=2\sigma$. 
The Lie algebra $\gg$, the infinitesimal $\gg$-principal bundle $P$ over $N$, the connection $\gamma$ on $P$ were constructed in \eqref{las}, Remark \ref{defp}, and Lemma \ref{defc} respectively. The properties $(i)-(iii)$ from Definition \ref{pgwt} follow from Proposition \ref{corrpa}, Equation \eqref{natr} and Lemma \ref{crel} respectively.
\end{proof}
\begin{ere} \label{nrp}
A geometry with parallel curvature with the base manifold a point is nothing but a naturally reductive decomposition $(\gg=\k\oplus\v,\la\cdot,\cdot\ra_\v)$ such that the adjoint action of $\k$ on $\v$ is faithful. 
\end{ere}

\section{The inverse construction}\label{inverse}

The aim of this section is to prove the following converse of Theorem \ref{redu}:

\begin{ath}\label{redu1}
Let $(N,g^N,\sigma,P,\gg,\gamma,\k,\v,\la\cdot,\cdot\ra_\v)$ be a geometry with parallel curvature, and let $\T^{\k} P$ be the integrable distribution of $\T P$ spanned at each point by fundamental vertical vector fields $A^*$ with $A\in{\k}$. Then the manifold $M$, locally defined as the space of leaves of $\T^{\k} P$, carries a geometry with parallel skew-symmetric torsion $(g,\tau)$. 
\end{ath}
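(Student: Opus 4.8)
The strategy is to reverse the construction of §\ref{gsts}--§\ref{reduc} step by step. Starting from the geometry with parallel curvature $(N,g^N,\sigma,P,\gg,\gamma,\k_1)$, one first builds the manifold $M$ as the local leaf space of the distribution $\T^{\k_1}P$ (which is integrable because $\k_1$ is a subalgebra of $\gg$ and the $A^*$ for $A\in\k_1$ span an involutive family of vector fields on $P$, by \eqref{br}). The bundle $P\to M$ is then (locally) a principal $K_1$-bundle, where $K_1$ is the connected Lie group with Lie algebra $\k_1$. The first task is to recognize $P\to M$ as a reduction of the orthonormal frame bundle of a metric $g$ on $M$. For this I would use the naturally reductive splitting $\gg=\k_1\oplus\v$ from property $(ii)$: the $\v$-valued part of $\gamma$, restricted to the horizontal spaces of $\T^{\k_1}P\to M$, together with the $\k_1$-invariant scalar product on $\v$, furnishes a soldering form $\theta^\v$ and hence a metric $g$ on $M$ (equivalently: identify $\T_xM$ with $\v$ via any frame $u$ over $x$, using the fact that the isotropy representation of $\k_1$ on $\v$ is faithful so that $K_1\hookrightarrow\SO(\v)$). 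The connection $\gamma$ then splits as $\gamma=\alpha+\theta^\v$ where $\alpha\in\Omega^1(P,\k_1)$ is a principal connection for $P\to M$ and $\theta^\v$ is the soldering form; this is exactly the decomposition appearing in the Lemma preceding §\ref{reduc}, read backwards.

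Next, $\alpha$ induces a metric covariant derivative $\nabla^\tau$ on $\T M$ via \eqref{nablatau}, whose torsion $T^\tau$ I claim is skew-symmetric and parallel. Skew-symmetry of the torsion is forced by the naturally reductive condition $(ii)$: writing $\Theta=\d\theta^\v+\alpha\wedge\theta^\v$ as in \eqref{dect}, the structure equation together with $[\xi_1,\xi_2]_\v=$ the $\v$-component of the bracket and the identity \eqref{natr} (which is precisely property $(ii)$) gives that $\Theta$, viewed via \eqref{ThetaT} as a $(2,1)$-tensor, is totally skew. For parallelism of $\tau$ and of the curvature projection, the key input is property $(i)$: $\nabla^\sigma\otimes\nabla^\gamma$-parallelism of $R^\gamma$ translates, via \eqref{rpa}, into the statement that $\Omega^\gamma(\tilde Y,\tilde Z)$ is constant along $\gamma$-horizontal curves in $P$; pulling this back through the identification $\T M\cong\v$ and using \eqref{2}, one gets $\nabla^\tau R^\tau=0$ in the relevant components, and similarly $\nabla^\tau T^\tau=0$ by differentiating the structure equation for $\Theta$ along horizontal directions and invoking $(i)$ once more. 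I would also need that $\sigma$ pulls back to the horizontal part $\tau^\hh$ of $\tau$: this should follow from the decomposition $\gamma=\alpha+\theta^\v$ together with the fact that $\nabla^\sigma=\nabla^{g^N}+\sigma$ has parallel torsion, combined with O'Neill's submersion formulas for $M\to N$ (the submersion being the composite $M=P/\T^{\k_1}P\to N$, which is Riemannian by construction of $g$ and $g^N$).

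The remaining point is to verify $\nabla^\tau$ is actually a \emph{metric} connection with $\nabla^\tau\tau=0$ for the full $3$-form $\tau=2\Theta$, i.e. that the $(2,1)$-tensor defined by $\alpha$ differs from $\nabla^g$ by the skew form $\tau$ with $\tau_X$ skew-symmetric; metricity is automatic since $P\subset$ frame bundle reduces the structure group to $K_1\subset\SO(\v)$. The compatibility condition $(iii)$, equation \eqref{s2}, is used precisely to guarantee the algebraic Bianchi-type relation that makes $R^g=R^\tau+\tau^2$ of \eqref{rgt} consistent, i.e. that $R^\tau$ so defined is genuinely the curvature of a metric connection with the prescribed skew torsion (it is the reverse reading of Lemma \ref{crel}). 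Concretely, $(iii)$ ensures that $\pi_{\V M}T^\tau$ and $R^\tau$ fit together so that $\Omega^\beta(\eta_1^*,\eta_2^*)^\v=\Theta^\v$ holds, which was the content of \eqref{torsion2} and is what is needed to close the circle.

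\textbf{Main obstacle.} The delicate part is the bookkeeping of which projections of curvature and torsion are parallel: property $(i)$ only gives parallelism of $R^\gamma$ with values in $\ad(P)$, and one must carefully disentangle, via the splitting $\gg=\k_1\oplus\v$ and the identification $\T M\cong\v$, that this yields $\nabla^\tau$-parallelism of the \emph{entire} torsion $3$-form $\tau$ on $M$ (not merely of its vertical or horizontal parts), together with the full curvature identities. Equivalently, one must check that the data $(g^N,\sigma,P,\gamma,\k_1)$ contains exactly enough information to reconstruct $\tau$ and that no integrability obstruction arises — this is where $(iii)$ is indispensable. I expect the cleanest route is to run the computations of §\ref{gsts} verbatim in reverse at the level of the forms $\theta^\v,\alpha,\Theta,\Omega^\gamma$ on $P$, deduce the pointwise algebraic identities on $\v$, and only at the end transfer them to tensor identities on $M$ via \eqref{nablatau}, \eqref{1} and \eqref{2}.
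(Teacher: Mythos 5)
Your overall strategy (reversing the reduction of \S\ref{gsts}--\S\ref{reduc}) is the right instinct, and you correctly locate the roles of conditions $(i)$--$(iii)$ of Definition \ref{pgwt}; but the concrete first step of your plan fails, and this failure propagates through the rest. You propose to view $P\to M$ as a reduction of the orthonormal frame bundle of $M$, with soldering form $\theta^\v=\gamma^\v$ and ``$\T_xM$ identified with $\v$''. This cannot work: $\dim M=\dim N+\dim\v$, so $\T_xM\not\cong\v$ unless $N$ is a point, and the structure group of $P\to M$ is (a group with Lie algebra) $\k_1\subset\so(\v)$, which acts only on $\v$. In the forward construction the frame bundle $Q\to M$ carried frames of all of $\RM^n=\hh\oplus\v$, but precisely this information about the action on $\hh$ was discarded when passing to $P=Q\times_\lambda G$ in \eqref{defp}: a point of $P$ carries no frame of the horizontal space $\H_xM\cong\T_{p(x)}N$. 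Hence there is no canonical soldering form on $P$ over $M$, formula \eqref{nablatau} is not applicable, and your connection $\alpha=\gamma^{\k_1}$ determines a covariant derivative only on the vertical distribution of $M\to N$, not on $\T M$. Relatedly, the horizontal part of the torsion cannot ``follow from the decomposition $\gamma=\alpha+\theta^\v$'': the form $\gamma$ contains no information about $\sigma$ whatsoever, so $\tau^\hh=\pi^*\sigma$ must be inserted by hand as an independent ingredient.

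The paper avoids exactly these obstacles by never trying to rebuild a frame bundle. It puts the Kaluza--Klein metric $g^P=p_N^*g^N+\la\gamma\cdot,\gamma\cdot\ra$ directly on $P$, defines the $3$-form explicitly on $P$ as $\tau^P=\tau_1+\tau_2+\tau_3$ in \eqref{tau}--\eqref{ta3} (with $\tau_1=p_N^*\sigma$ supplying the horizontal torsion, $\tau_2$ the mixed part built from $\Omega^\gamma$, and $\tau_3$ the vertical part built from the bracket on $\v$, skew by $(ii)$), checks by equivariance that both descend through the Riemannian submersion $P\to M=P/K_1$ with totally geodesic fibres, and then verifies $\nabla^{\tau^P}\tau^P=0$ on basic directions via Koszul-type formulas. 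In that verification, condition $(i)$ enters through \eqref{rpa}, condition $(iii)$ through \eqref{s2}, the hypothesis $\nabla^\sigma\sigma=0$ handles the purely horizontal term, and an additional identity, the cyclic relation \eqref{s1} obtained from $(i)$ plus the Bianchi identity for $\Omega^\gamma$, is needed for the term $(\nabla^{\tau^P}_{\xi^*}\tau_1)$; this last ingredient is absent from your outline. To repair your approach you would essentially have to reintroduce a frame bundle of $N$ (e.g.\ the holonomy bundle of $\nabla^\sigma$) and take its fibered product with $P$, i.e.\ reconstruct $Q$ itself, which is a substantially longer route than the direct construction on $P$.
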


\begin{proof}
Let us start by deriving a formula which will be necessary later on. The fact that $R^\gamma$ is parallel with respect to $\nabla^\sigma\otimes\nabla^\gamma$, together with \eqref{rpa}, and the Bianchi identity
$\d \Omega^\gamma = - \gamma \wedge \Omega^\gamma$, shows that for all vector fields $X, Y, Z$ on $N$ with horizontal lifts $\tilde X,\tilde Y, \tilde Z$ to $P$, we have
\bea
0 &=& \mathop{\mathfrak{S}}_{XYZ} 
\left(\tilde X(\Omega^\gamma(\tilde Y, \tilde Z)) - \Omega^\gamma( \widetilde{[X,Y]}, \tilde Z)  \right) \\
&=&
\mathop{\mathfrak{S}}_{XYZ} 
\left(\Omega^\gamma( \widetilde{\nabla^\sigma_XY}, \tilde Z )
 + \Omega^\gamma(\tilde Y, \widetilde{\nabla^\sigma_XZ})
   - \Omega^\gamma(\widetilde{[X,Y]}, \tilde Z  \right) \\
&=&
\mathop{\mathfrak{S}}_{XYZ} 
\left(\Omega^\gamma( \widetilde{\nabla^\sigma_XY}, \tilde Z )
 + \Omega^\gamma(\tilde Z, \widetilde{\nabla^\sigma_YX})
   - \Omega^\gamma(\widetilde{[X,Y]}, \tilde Z  \right) \\
&=&
2  \mathop{\mathfrak{S}}_{XYZ}   
\Omega^\gamma(\widetilde{\sigma(X,Y)}, \tilde Z) \ , 
\eea
as
$
\nabla^\sigma_XY - \nabla^\sigma_YX -[X,Y] = \nabla^{g^N}_XY - \nabla^{g^N}_YX - [X,Y] + 2 \sigma(X,Y)
= 2 \sigma(X,Y)
$.
We thus obtain 
\begin{equation}\label{s1}
 \mathop{\mathfrak{S}}_{XYZ} \Omega^\gamma (\widetilde{\sigma(X, Y)}, \tilde Z)= 0,\qquad\forall\ X,Y,Z\in \T N\ .
 \end{equation}

\medskip

{\bf Step 1.} We choose any scalar product $\la\cdot,\cdot\ra$ on $\gg$ which extends $\la\cdot,\cdot\ra_\v$ and makes $\k$ and $\v$ orthogonal, and define a Riemannian metric $g^P$ on the total space of $P$ by 
\be\label{met}
g^P(U,V):= ((\pi_N)^* g^N)(U,V) + \langle\gamma(U), \gamma(V)\rangle\ .
\ee
In this way, the projection $\pi_N:P \rightarrow N$ becomes a Riemannian submersion. 
The tangent bundle $\T P$ splits into a $g^P$-orthogonal direct sum of distributions $\T P=\T^{hor} P\oplus \T^\v P\oplus\T^{\k} P$, where $\T^{hor} P:=\ker(\gamma)$ is spanned at each point by horizontal lifts $\tilde X$ of vector fields $X$ on $N$, and $\T^\v P$ and $\T^{\k} P$ are spanned at each point by fundamental vertical vector fields $A^*$ with $A\in\v$ and $A\in{\k}$ respectively.
The Levi-Civita connection of $g^P$ can be easily computed using these adapted vector fields. By definition,
$$g^P(\tilde X,\tilde Y)=g^N(X,Y),\qquad g^P(\tilde X,A^*)=0,\qquad g^P(A^*,B^*)=\langle A,B\rangle\ .$$
Moreover, since $\gamma([\tilde X,\tilde Y])=-\d\gamma(\tilde X,\tilde Y)=-\Omega^\gamma(\tilde X,\tilde Y)$, we obtain
$$[A^*, B^*] = [A, B]^*, \qquad [A^*, \tilde X] = 0,\qquad [\tilde X, \tilde Y] = \widetilde{[X, Y]} - \Omega^\gamma(\tilde X, \tilde Y)^*\ .$$

The Koszul formula immediately implies that the Levi-Civita connection $\nabla^{g^P}$ of $g^P$ is given by
\beaa
\nabla^{g^P}_{\tilde X} \tilde Y &=&  \widetilde{\nabla^{g^N}_XY} - \tfrac12  \Omega^\gamma(\tilde X, \tilde Y)^*\label{k1}\\
\nabla^{g^P}_{\tilde X} A^*   &=& \nabla^{g^P}_{A^*}\tilde X  = \tfrac12  \Omega^\gamma_A(\tilde X ) \label{k2}\\[.5ex]
g^P(\nabla^{g^P}_{A^*} B^*, \tilde X) &=& 0\label{k3}\\
g^P(\nabla^{g^P}_{A^*} B^*, C^*) &=& \tfrac12  \left(   \langle [A, B], C\rangle - \langle[B, C], A\rangle + \langle[C, A], B\rangle  \right)\label{k4}
\eeaa
where $\nabla^{g^N}$ denotes the Levi-Civita covariant derivative of $(N,g^N)$ and $\Omega^\gamma_A(\tilde X )$ is the horizontal vector field of $P$ satisfying
$
g^P(\Omega^\gamma_A(\tilde X ), \tilde Y) = \langle\Omega^\gamma(\tilde X, \tilde Y), A\rangle
$
for each vector field $Y\in \Gamma(\T N)$. When $A\in\v$, $\Omega^\gamma_A$ is related to the endomorphism $R^\gamma_{uA}$ defined in \eqref{og} by $\Omega^\gamma_A(\tilde X_u)=\widetilde{R^\gamma_{uA}(X)}$.

{\bf Step 2.} We show that the metric $g^P$ projects to a metric $g^M$ on $M$ making the (locally defined) projection $\pi_M:(P,g^P)\to (M,g^M)$ a Riemannian submersion with totally geodesic fibres tangent to $\T^{\k} P$. The distribution 
$\T^{\k} P$ is totally geodesic by \eqref{k3} and \eqref{k4}, and the fact that $[{\k},\v ]\subset \v $ and $[{\k},\k ]\subset \k $.

It remains to show that the restriction $h$ of $g^P$ to $\T^{hor} P\oplus \T^\v P$ is constant in $\T^{\k} P$-directions, that is, $(\LieD_{A^*}h)(U,V)=0$ for every $A\in {\k}$ and $U,V\in\Gamma(\T P)$. Note first that $$h(\nabla^{g^P}_UA^*,V)=U(h(A^*,V))-h(A^*,\nabla^{g^P}_UV)=0.$$
We thus obtain 
\bea(\LieD_{A^*}h)(U,V)&=&A^*(h(U,V))-h([A^*,U],V)-h(U,[A^*,V])\\&=&(\nabla^{g^P}_{A^*}h)(U,V)+h(\nabla^{g^P}_UA^*,V)+h(U,\nabla^{g^P}_VA^*)=(\nabla^{g^P}_{A^*}h)(U,V).\eea
Since $\T^{\k} P$ is totally geodesic, it is clear that this last term vanishes when $U$ or $V$ are tangent to $\T^{\k} P$. Hence, to check the vanishing of $\nabla^{g^P}_{A^*}h$, it is sufficient to consider the cases $(U,V)=(\tilde X,\tilde Y)$, $(U,V)=(\tilde X,\xi^*)$ and $(U,V)=(\xi^*,{\xi_1}^*)$, where $X,Y$ are vector fields on $N$ and $\xi,{\xi_1}\in \v $. Using \eqref{k1}--\eqref{k4} and the fact that $A^*(h(U,V))=0$ for the above chosen vector fields $(U,V)$ and $A\in{\k}$, we get:
\bea(\nabla^{g^P}_{A^*}h)(\tilde X,\tilde Y)&=&-h(\nabla^{g^P}_{A^*}\tilde X,\tilde Y)-h(\tilde X,\nabla^{g^P}_{A^*}\tilde Y)=-\tfrac12 \left( g^P(\Omega^\gamma_A(\tilde X ),\tilde Y)+g^P(\tilde X,\Omega^\gamma_A(\tilde Y ))\right)\\
&=&-\tfrac12 \left(\langle \Omega^\gamma(\tilde X ,\tilde Y),A\rangle+\langle \Omega^\gamma(\tilde Y ,\tilde X),A\rangle\right)=0\ , \\
(\nabla^{g^P}_{A^*}h)(\tilde X,\xi^*)&=&-h(\nabla^{g^P}_{A^*}\tilde X,\xi^*)-h(\tilde X,\nabla^{g^P}_{A^*}\xi^*)=0 \ ,\\
(\nabla^{g^P}_{A^*}h)(\xi^*,{\xi_1}^*)&=&-h(\nabla^{g^P}_{A^*}\xi^*,{\xi_1}^*)-h(\xi^*,\nabla^{g^P}_{A^*}{\xi_1}^*)=0\ .
\eea

This shows that there exists a Riemannian metric $g^M$ on $M$ such that $(\pi_M)^*g^M=h$. Note that by \eqref{met} we have
\be\label{eh}h(U,V)= ((\pi_N)^* g^N)(U,V) + \langle\gamma(U)_\v, \gamma(V)_\v\rangle_\v\ .
\ee

{\bf Step 3.} We define a $3$-form $\tau^P$ on $P$ which projects onto a $3$-form $\tau$ on $M$. Let $\gamma=\gamma^{\k}+\gamma^\v$ be the decomposition of the connection form $\gamma$ corresponding to the decomposition $\gg={\k}\oplus\v$.
Inspired by formulas \eqref{Theta}, \eqref{thc} and \eqref{las} in the previous section, we define 
\be\label{tau}\tau^P=\tau_1+\tau_2+\tau_3\ ,\ee
where
\begin{eqnarray}\label{ta1}\tau_1(U,V,W)&:=&(\pi_N^* \sigma)(U,V,W)\ ,\\ 
\label{ta2}\tau_2(U,V,W)&:=& \tfrac12   \mathop{\mathfrak{S}}_{UVW}  \langle\Omega^\gamma(U,V), \gamma^\v(W)\rangle  \ ,\\ 
\label{ta3}\tau_3(U,V,W)&:=&  -\tfrac12  \langle  [\gamma^\v (U), \gamma^\v (V)],  \gamma^\v (W) \rangle \ .\end{eqnarray}
Note that the $3$-form $\tau_3$ is skew-symmetric because of the natural reductivity of the decomposition $\gg={\k}\oplus \v $. The $3$-form $\tau^P$ is clearly horizontal with respect to $\pi_M$, in the sense that it vanishes whenever one of the entries belongs to $\T^{\k} P$. In order to show that it is projectable onto $M$, it suffices to show that its  Lie derivative with respect to any fundamental vector field  $A^*$  with $A \in {\k}$ vanishes. 

First, it is clear that $\tau_1$ is  projectable onto $N$, so $\LieD_{A^*}\tau_1=0$ for every $A \in \gg$. Using the equivariance of $\gamma$ we have as before 
$$(\LieD_{A^*} \gamma )(U) =  - [A, \gamma(U)],\quad (\LieD_{A^*}\Omega^\gamma )(U,V) =  - [A, \Omega^\gamma(U,V)],\quad\forall\ A\in \gg,\ \forall \ U,V\in\Gamma(\T P)\ .$$
In particular, when $A\in{\k}$, the bracket with $A$ preserves the decomposition $\gg={\k}\oplus\v$, whence 
$$(\LieD_{A^*} \gamma^\v )(U) =  - [A, \gamma^\v(U)],\qquad \forall\ A\in \k,\ \forall \ U\in\Gamma(\T P)\ .$$
Using these relations we can compute
$$ (\LieD_{A^*} \tau_2)(U,V,W)= -\tfrac12   \mathop{\mathfrak{S}}_{UVW}  \left(\langle[A,\Omega^\gamma(U,V)], \gamma^\v(W)\rangle+ \langle\Omega^\gamma(U,V), [A,\gamma^\v(W)]\rangle\right)=0$$
since $\ad_A$ is skew-symmetric on $\gg$, and finally, using the Jacobi identity, we get
\bea (\LieD_{A^*} \tau_3)(U,V,W)&=&\tfrac12  \langle  [[A,\gamma^\v (U)], \gamma^\v (V)],  \gamma^\v (W) \rangle + \tfrac12  \langle  [\gamma^\v (U), [A,\gamma^\v (V)]],  \gamma^\v (W) \rangle\\&& +\tfrac12  \langle  [\gamma^\v (U), \gamma^\v (V)],  [A,\gamma^\v (W)] \rangle\\
&=&\tfrac12  \langle  [[\gamma^\v (V),\gamma^\v (U)], A],  \gamma^\v (W) \rangle -\tfrac12  \langle  [A, [\gamma^\v (U), \gamma^\v (V)]], \gamma^\v (W)] \rangle=0 \ .
\eea

This shows the existence of a $3$-form $\tau$ on $M$ such that $\pi_M^*(\tau)=\tau^P$.

{\bf Step 4.} We check that $\nabla^\tau:=\nabla^{g^M}+\tau$ has parallel skew-symmetric torsion, where $\nabla^{g^M}$ denotes the Levi-Civita covariant derivative of  $(M, g^M)$. Let us denote by $\nabla^{\tau^P}=\nabla^{g^P}+\tau^P$. 
Since $\pi_M^*(\tau)=\tau^P$ and since $\pi_M:(P,g^P)\to (M,g^M)$ is a Riemannian submersion, we have 
$\nabla^\tau\tau=0$ if and only if $\nabla^{\tau^P}\tau^P$ vanishes whenever applied to vectors in $\T^{hor} P\oplus\T^\v P$. Since the vector fields of the form $\tilde X$ for $X\in\Gamma(\T N)$ span $\T^{hor} P$ and vector fields of the form $\xi^*$ for $\xi\in\v$ span $\T^\v P$ at each point, we will assume that each of the $4$ entries of $\nabla^{\tau^P}\tau^P$ is of one of these types. 

First, using \eqref{k1}--\eqref{k4} and \eqref{ta1}--\eqref{ta3}, we readily compute 
\beaa
\nabla^{\tau^P}_{\tilde X} \tilde Y &=&  \widetilde{\nabla^\sigma_XY} - \tfrac12  (\Omega^\gamma(\tilde X, \tilde Y)^{\k})^*\label{t1}\ ,\\
\nabla^{\tau^P}_{\tilde X} \xi^*   &=& 0 \label{t2}\ ,\\
\nabla^{\tau^P}_{\xi^*}\tilde X &=& \Omega^\gamma_\xi(\tilde X)\label{t3}\ ,\\
\nabla^{\tau^P}_{\xi_1^*} \xi_2^* &=& \tfrac12  ([\xi_1, \xi_2]^{\k})^*\label{t4}\ ,
\eeaa
where the superscript ${\k}$ in \eqref{t1} and \eqref{t4} denotes the projection from $\gg$ to ${\k}$. Now, $\tau_1$ vanishes unless all entries are in $\T^{hor} P$, $\tau_2$ vanishes unless two entries are in $\T^{hor} P$ and one is in $\T^\v P$, and $\tau_3$ vanishes unless all entries are in $\T^\v P$.
From \eqref{t1}--\eqref{t4} we see that the only possibly non-vanishing terms in $\nabla^{\tau^P}\tau_P$ on vectors of the type $\tilde X$ or $\xi^*$ are:
\bea(\nabla^{\tau^P}_{\tilde X}\tau_1)(\tilde Y_1,\tilde Y_2,\tilde Y_3)&=&(\nabla^{\sigma}_{X}\sigma)(Y_1,Y_2,Y_3)\ ,\\
(\nabla^{\tau^P}_{\xi^*}\tau_1)(\tilde Y_1,\tilde Y_2,\tilde Y_3)&=&\xi^*(\tau_1(\tilde Y_1,\tilde Y_2,\tilde Y_3))-\mathop{\mathfrak{S}}_{123}(\pi_N^*\sigma)(\Omega^\gamma_\xi(\tilde Y_1),\tilde Y_2,\tilde Y_3)\\
&=&-\mathop{\mathfrak{S}}_{123}\langle \Omega^\gamma(\tilde Y_1,\widetilde{\sigma(Y_2,Y_3)}),\xi\rangle
\ ,\\
(\nabla^{\tau^P}_{\tilde X}\tau_2)(\tilde Y,\tilde Z,\xi^*)&=&\tfrac12\left(\tilde X(\langle\Omega^\gamma(\tilde Y,\tilde Z),\xi\rangle)-\langle\Omega^\gamma(\widetilde{\nabla^\sigma_X Y},\tilde Z),\xi\rangle-\langle\Omega^\gamma(\tilde Y,\widetilde{\nabla^\sigma_X Z}),\xi\rangle   \right)\ ,\\
(\nabla^{\tau^P}_{\xi_1^*}\tau_2)(\tilde Y,\tilde Z,\xi_2^*)&=&\tfrac12\left(\xi_1^*(\langle\Omega^\gamma(\tilde Y,\tilde Z),\xi_2\rangle)-\langle\Omega^\gamma(\Omega^\gamma_{\xi_1}(\tilde Y),\tilde Z),\xi_2\rangle  -\langle\Omega^\gamma(\tilde Y,\Omega^\gamma_{\xi_1}(\tilde Z)),\xi_2\rangle \right)\\
&=&\tfrac12\left(-\langle[\xi_1,\Omega^\gamma(\tilde Y,\tilde Z)],\xi_2\rangle)+\langle\Omega^\gamma_{\xi_1}(\tilde Y),\Omega^\gamma_{\xi_2}(\tilde Z)\rangle  -\langle\Omega^\gamma_{\xi_2}(\tilde Y),\Omega^\gamma_{\xi_1}(\tilde Z)\rangle \right)\ ,\\
(\nabla^{\tau^P}_{\tilde X}\tau_3)(\xi_1^*,\xi_2^*,\xi_3^*)&=&-\tfrac12\tilde X(\langle[\xi_1,\xi_2],\xi_3\rangle)=0\ ,\\
(\nabla^{\tau^P}_{\xi^*}\tau_3)(\xi_1^*,\xi_2^*,\xi_3^*)&=&-\tfrac12\xi^*(\langle[\xi_1,\xi_2],\xi_3\rangle)=0\ .\\
\eea
The vanishing of the first four expressions follows from the assumption that $\nabla^\sigma$ has parallel torsion on $N$ and from \eqref{s1}, \eqref{rpa}, and \eqref{s2} respectively.
\end{proof}

\begin{ere} The geometry with parallel skew-symmetric torsion given by Theorem \ref{redu1} in the particular case where $N$ is reduced to a point, is the canonical homogeneous connection on the Riemannian homogeneous space defined by the naturally reductive decomposition $(\gg=\k\oplus\v,\la\cdot,\cdot\ra_\v)$ in Remark \ref{nrp}.
\end{ere}

\section{Geometries with torsion of special type}\label{gtst}

We have seen in Theorems \ref{redu} and \ref{redu1} that geometries with parallel skew-symmetric torsion can be characterized by means of geometries with parallel curvature, introduced in Definition \ref{pgwt}. The drawback of this notion is that it is rather intricate and hard to apprehend.

However, there is a particular class of geometries with parallel curvature which is much easier to define, sufficiently restrictive in order to allow a classification result (Theorem \ref{pgs} below), and at the same time sufficiently general in order to provide many examples of geometries with parallel skew-symmetric torsion $(M,g,\tau)$. These examples have the special feature that their vertical distribution $\V M$ is spanned by $\nabla^\tau$-parallel vector fields, and generalize Sasakian or 3-$(\alpha,\delta)$-Sasakian manifolds with $\delta=2\alpha$ (cf. \cite{ad}). 

More precisely, we make the following:

\begin{ede}\label{sgwt}
A {\em geometry with torsion of special type} is a geometry with parallel skew-symmetric torsion $(M,g,\tau)$ satisfying one of the following equivalent conditions:

- the summand $\V M$ in the standard decomposition (Definition \ref{stdec}) is spanned by $\nabla^\tau$-parallel vector fields;

- the infinitesimal holonomy algebra $\hol:=\hol(\nabla^\tau)$ acts trivially on $\v$.
\end{ede}

\begin{elem}\label{l73}
For every geometry with torsion of special type $(M,g,\tau)$, the horizontal part $\tau^\hh=\sum _\alpha \tau^{\hh_\alpha}$ of $\tau$ vanishes.
\end{elem}
\proof
By assumption, $\mathcal VM$ is spanned by an orthonormal frame of
$\nabla^\tau$-parallel vector fields $\xi_1, \ldots, \xi_r$. 
The torsion decomposes under the action of the holonomy group of $\nabla^\tau$ as
$$
\tau = \sum _\alpha \tau^{\hh_\alpha} + \sum_{i, \alpha} \xi_i \otimes F_{i \alpha} 
+ \sum_{ijk} c_{ijk}  \xi_i \wedge \xi_j \wedge \xi_k \ ,
$$
with $\tau^{\hh_\alpha} \in \Lambda^3 \hh_\alpha,  F_{i\alpha} \in \Lambda^2 \hh_\alpha$.  Since all 
components are $\nabla^\tau$-parallel, it follows that $c_{ijk}$ are constants.

We compute the action of $\tau_{\xi_i}$ on $\tau^\hh$. It is clear that
$F_{i \alpha}$  acts trivially on the components $\tau^{\hh_\beta}$ for every $\beta\neq\alpha$. From Lemma \ref{tauvtau} we thus obtain
$$
0=\tau_{\xi_i}\cdot\tau^\hh = \left(\sum_\alpha F_{i \alpha} + 3 \sum_{j, k}  c_{ijk} \xi_j \wedge \xi_k\right)\cdot\tau^\hh= \sum_\alpha F_{i \alpha}\cdot\tau^\hh= \sum_\alpha F_{i \alpha}\cdot\tau^{\hh_\alpha} \ .
$$
This shows that 
$
F_{i \alpha} \cdot \tau^{\hh_\alpha} = 0
$
for all $\alpha$. Note that $F_{i \alpha} \in \Lambda^2 \H_{\alpha} M$ is a $\nabla^\tau$-parallel $2$-form on the irreducible sub-bundle $\H_{\alpha} M$, so as an endomorphism it is proportional to a complex structure on $\H_{\alpha} M$. On the other hand, complex structures act injectively on
$3$-forms.

Assume that there exists some index $\beta$ with $\tau^{\hh_{\beta}}\neq0$. Then the above argument shows that $F_{i\beta}=0$ for every $i$, hence the decomposition $\T M=\H_{\beta} M\oplus (\H_{\beta} M)^\perp$ would satisfy the hypothesis of Lemma \ref{cr}, contradicting the indecomposability of $M$. This shows that $\tau^{\hh_{\alpha}}=0$ for every $\alpha$.
\qed

We now introduce, as announced earlier, the particular class of geometries with parallel curvature (Definition \ref{pgwt}) for which the sub-algebra $\k$ of $\gg$ vanishes:

\begin{ede}\label{las1} Let $G$ be a compact Lie group with Lie algebra $\gg$.
A {\em parallel $\gg$-structure} $(g^N,P,\gg,\gamma,\langle\cdot,\cdot\rangle,\psi)$ on a manifold $N$ is given by:
\begin{enumerate}
\item[$(i)$] a Riemannian metric $g^N$ on $N$;
\item[$(ii)$] a locally defined $G$-principal bundle $P\to N$ with adjoint bundle $\ad (P)$;
\item[$(iii)$] an $\ad_\gg$-invariant scalar product  $\langle\cdot,\cdot\rangle$ on $\gg$, thus inducing a scalar product also denoted by $\langle\cdot,\cdot\rangle$ on the fibers of $\ad (P)$;
\item[$(iv)$] a connection form $\gamma \in \Omega^1(P, \gg)$ whose curvature tensor $R^\gamma:\Lambda^2\T N\to \ad (P)$ is parallel with respect to the Levi-Civita connection of $g^N$ on $\Lambda^2\T N$ and the connection induced by $\gamma$ on $\ad (P)$;
\item[$(v)$] a Lie algebra bundle morphism $\psi: \ad (P)\to \Lambda^2\T N$ which is the metric adjoint of $-R^\gamma$, in the sense that 
\begin{equation}\label{epsi}g^N(\psi(\sigma),X\wedge Y)=-\langle \sigma,R^\gamma_{X,Y}\rangle,\qquad\ \forall\ X,Y\in\T N,\ \forall\ \sigma\in\ad(P)\ .
\end{equation}
\end{enumerate}
\end{ede}

\begin{ere}\label{clif} This definition is in many respects similar to the one of parallel even Clifford structures introduced in \cite[Def. 2.2]{cliff}. More precisely, a parallel rank $r$ even Clifford structure on $N$ satisfying the curvature condition in \cite[Thm. 3.6 (b)]{cliff}, which up to a factor 2 is exactly \eqref {epsi} above, defines a parallel $\so(r)$-structure on $N$ in the sense of Definition \ref{las1} after rescaling the scalar product on $\so(r)$ by a factor $2$.
\end{ere}

We will now explain the correspondence between parallel $\gg$-structures and geometries with torsion of special type.

\begin{epr}\label{p75}
The (locally defined) base $N$ of the standard submersion of a geometry with torsion of special type $(M,g,\tau)$ carries a parallel $\gg$-structure. Conversely, every parallel $\gg$-structure $(g^N,P,\gg,\gamma,\langle\cdot,\cdot\rangle,\psi)$ on a manifold $N$ induces a geometry with torsion of special type on the total space $P$.
\end{epr}
\begin{proof}
By Theorem \ref{redu}, $N$ carries a geometry with parallel curvature $(g^N,\sigma,P,\gg,\gamma,\k,\v,\langle\cdot,\cdot\rangle_\v)$ satisfying Definition \ref{pgwt}. This shows already that conditions $(i)$ and $(ii)$ in Definition \ref{las1} hold. By Definition \ref{sgwt}, we have $\k=0$, which by \eqref{defp} implies that $P$ can be identified with $M$ itself. Lemma \ref{l73} shows that $\tau^\hh=0$ on $M$, so $\sigma=0$ by Remark \ref{r411}, i.e. the connection $\nabla^\sigma$ on $N$ is the Levi-Civita connection of $g^N$. The Lie algebra $\gg$ is in this special case equal to $\v$ and the scalar product $\langle\cdot,\cdot\rangle:=\langle\cdot,\cdot\rangle_\v$ on $\gg$ is $\ad_\gg$-invariant. In particular $\gg$ is of compact type, thus proving condition $(iii)$. As for $(iv)$, it is a direct consequence of Definition \ref{pgwt} $(i)$. 

Finally, $(v)$ follows from Definition \ref{pgwt} $(iii)$. Indeed, with the notation introduced in \eqref{og}, the metric adjoint of $-R^\gamma$, denoted by $\psi: \ad (P)\to \Lambda^2\T N\simeq\so(\T N)$, defined by \eqref{epsi}, satisfies $\psi(u\xi)=-R^\gamma_{u\xi}$. Using \eqref{s2} together with the $\ad_\gg$ invariance of the scalar product on $\gg$,
we obtain for every $u\in P$, for every elements $\xi_1,\xi_2\in\v=\gg$, and for every tangent vectors $X,Y\in\T_{\pi_N(u)}N$:
\bea g^N(\psi([u\xi_1,u\xi_2]),X\wedge Y)&=&-\langle[u\xi_1,u\xi_2],R^\gamma_{X,Y}\rangle=-\langle u[\xi_1,\xi_2],R^\gamma_{X,Y}\rangle
=-\langle [\xi_1,\xi_2],u^{-1}R^\gamma_{X,Y}\rangle\\&=&\langle \xi_1,[u^{-1}R^\gamma_{X,Y},\xi_2]\rangle=g^N( [R^\gamma_{u\xi_1},R^\gamma_{u\xi_2}](X),Y)\\
&=&g^N( [R^\gamma_{u\xi_1},R^\gamma_{u\xi_2}],X\wedge Y)=g^N( [\psi(u\xi_1),\psi(u\xi_2)],X\wedge Y),
\eea
thus showing that $\psi$ is a Lie algebra bundle morphism.

Conversely, a parallel $\gg$-structure $(g^N,P,\gg,\gamma,\langle\cdot,\cdot\rangle,\psi)$ on $N$ defines in a tautological way a geometry with parallel curvature on $N$ (Definition \ref{pgwt}) with $\k=0$, $\v:=\gg$ and $\langle\cdot,\cdot\rangle_\v:=\langle\cdot,\cdot\rangle$. By Theorem \ref{redu1}, the total space of $P$ carries a geometry with parallel skew-symmetric torsion $(g,\tau)$, and the fact that $\k=0$ just means that the holonomy of $\nabla^\tau$ acts trivially on the vertical space $\v\simeq\gg$ (see Definition \ref{sgwt}). 
\end{proof}

\begin{ere}\label{rd}
The above result shows that a parallel $\gg$-structure $(g^N,P,\gg,\gamma,\langle\cdot,\cdot\rangle,\psi)$ on $N$ defines a geometry with parallel curvature $(N,g^N,\sigma:=0,P,\gg,\gamma,\k:=0,\v:=\gg,\langle\cdot,\cdot\rangle_\v:=\la\cdot,\cdot,\ra)$ in the sense of Definition \ref{pgwt}. More generally, for every sub-algebra $\k\subset \gg$, it defines a geometry with parallel curvature $(N,g^N,\sigma=0,P,\gg,\gamma,\k,\v:=\k^\perp,\langle\cdot,\cdot\rangle_\v:=\la\cdot,\cdot,\ra|_\v)$.
By Theorem \ref{redu1}, we thus see that the principal bundle $P$ of a parallel $\gg$-structure on $N$, as well as each of its quotients by subgroups of $G$, carry geometries with parallel skew-symmetric torsion. 
\end{ere}

\begin{exe} A parallel rank $r$ even Clifford structure on $N$ satisfying the curvature condition in \cite[Thm. 3.6 (b)]{cliff} determines a $S^{r-1}$-fibration $Z\to N$ whose vertical distribution belongs to the curvature constancy of $Z$. 
By Remark \ref{clif}, the parallel rank $r$ even Clifford structure also defines a parallel $\so(r)$-structure on $N$, whose quotient by $\so(r-1)$ is exactly $Z$. Remark \ref{rd} thus shows that $Z$ also carries a geometry with parallel skew-symmetric torsion. 

In particular, when $r=3$, a  parallel even Clifford structure is just a quaternion-Kähler structure on $N$, and $Z$ is its twistor space. The curvature condition  in \cite[Thm. 3.6 (b)]{cliff} is satisfied after rescaling the metric provided that $N$ has positive scalar curvature. We thus recover the well known fact that the twistor spaces of positive quaternion-Kähler manifolds carry a connection with parallel skew-symmetric torsion \cite{alex2}.
\end{exe}

\section{Parallel $\gg$-structures}\label{para}

Our next aim is to obtain the classification of parallel $\gg$-structures (and thus, according to Proposition \ref{p75}, the one of geometries with torsion of special type).

Let $(g^N,P,\gg,\gamma,\langle\cdot,\cdot\rangle,\psi)$ be a parallel $\gg$-structure on $N$. We derive first a useful identity, relating the Riemannian curvature of $(N,g)$ to the curvature form of of $(P,\gamma)$.

We denote by $\nabla^N$ and $R^N$ the Levi-Civita covariant derivative and the curvature tensor of $g^N$.   Since by Definition \ref{las1} $(iv)$, $\psi=-(R^\gamma)^*$ is $\nabla^N\otimes\nabla^\gamma$-parallel, we get for every vector fields $X,Y$ on $N$ and local section $\sigma$ of $\ad(P)$:
$$\psi(\nabla^\gamma_X \sigma)=\nabla^N_X(\psi(\sigma))\ ,$$
whence after a second covariant derivative and skew-symmetrization:
\begin{equation}\label{rgrn} \psi(R^\gamma_{X,Y} \sigma)=R^N_{X,Y}(\psi(\sigma))=[R^N_{X,Y},\psi(\sigma)]\ .
\end{equation}
Here the curvature $R^\gamma_{X,Y}$ acts on  $\sigma$ by the Lie bracket   
$R^\gamma_{X,Y}\sigma = [R^\gamma_{X,Y}, \sigma]$ of the Lie algebra bundle $\ad (P)$. Since $\psi$ is a Lie algebra bundle morphism, \eqref{rgrn} equivalently reads 
\beq\label{e50}[\psi(R^\gamma_{X,Y}),\psi(\sigma)]=[R^N_{X,Y},\psi(\sigma)]\eeq
for all tangent vectors $X,Y\in \T N$ and $\sigma\in \ad(P)$.

Note that there are several types of natural operations that one can make with parallel $\gg$-structures: products, reductions to ideals of the Lie algebra, restrictions to Riemannian factors of the manifold, or Whitney products. We will now explain these constructions in detail.

\subsection{Products of parallel $\gg$-structures}\label{product} Clearly, if $(g^{N_i},P_i,\gamma_i,\psi_i)$ are parallel $\gg_i$-structures on $N_i$ for $i=1,2$, and if $\gg$ denotes the direct sum $\gg_1\oplus\gg_2$, endowed with the direct sum scalar product, then $(g^{N_1}+g^{N_2},P_1\times P_2,\gamma_1+\gamma_2,\psi_1+\psi_2)$ is a parallel $\gg$-structure on $N_1\times N_2$, called the {\em product $\gg$-structure.}

Note that this construction also makes sense in the degenerate cases where $N_2$ is a point, or when $\gg_2=0$ (in which case we call this a $0$-structure).

\subsection{Reduction to an ideal of the Lie algebra}
Assume that $(P,\gamma,\psi)$ is a parallel $\gg$-structure on $(N,g^N)$ and that $\gg_1$ is an ideal of $\gg$. Since $\langle\cdot,\cdot\rangle$ is  $\ad_\gg$-invariant, $\gg$ is a direct sum of Lie algebras $\gg=\gg_1\oplus\gg_2$, where $\gg_2:=\gg_1^\perp$. Since everything is local, one can assume that $G=G_1\times G_2$, such that the Lie algebra of $G_i$ is $\gg_i$. 

\begin{elem} \label{g1} Let $G_1$ and $G_2$ be compact Lie groups with Lie algebras $\gg_1$ and $\gg_2$ endowed with bi-invariant scalar products, and let $G:=G_1\times G_2$, with Lie algebra $\gg=\gg_1\oplus\gg_2$ endowed with the direct sum scalar product. Then every parallel $\gg$-structure on $N$ with respect to this scalar product, induces in a canonical way parallel $\gg_1$- and $\gg_2$-structures on $N$.
\end{elem}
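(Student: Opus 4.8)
The plan is to exploit that $G=G_1\times G_2$ is a direct product, so that every piece of data of a parallel $\gg$-structure splits orthogonally along $\gg=\gg_1\oplus\gg_2$ and the $\gg_i$-part is a parallel $\gg_i$-structure on the same Riemannian manifold $(N,g^N)$. I would construct the $\gg_1$-structure explicitly; the $\gg_2$-structure is obtained symmetrically.

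First I would set $P_1:=P/G_2$, a (locally defined) $G_1$-principal bundle over $N$, so that $P\to P_1$ is a $G_2$-principal bundle and $P\cong P_1\times_N P_2$ (with $P_2:=P/G_1$ defined analogously). Since $\gg_1$ and $\gg_2$ are ideals of $\gg$ with $[\gg_1,\gg_2]=0$, and $\Ad_{(g_1,g_2)}=\Ad_{g_1}\oplus\Ad_{g_2}$ preserves the splitting $\gg=\gg_1\oplus\gg_2$, the orthogonal projection $\pr_1\colon\gg\to\gg_1$ is $\Ad_G$-equivariant, and $G_2$ acts trivially on $\gg_1$. I would then check that the $\gg_1$-component $\gamma^{(1)}:=\pr_1\circ\gamma$ of the connection form is horizontal and invariant for the $G_2$-action — for $A\in\gg_2$ one has $\gamma^{(1)}(A^*)=\pr_1(A)=0$, and $R_b^*\gamma^{(1)}=\Ad_{b^{-1}}\gamma^{(1)}=\gamma^{(1)}$ for $b\in G_2$ — so that it descends to a $\gg_1$-valued $1$-form $\gamma_1$ on $P_1$, and that $\gamma_1$ is a connection form, since $\gamma_1(A^*)=\pr_1(A)=A$ for $A\in\gg_1$ and the $G_1$-equivariance of $\gamma_1$ is inherited from that of $\gamma$. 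Likewise, $\Ad_G$-invariance of $\gg_i$ gives an orthogonal decomposition $\ad(P)=\ad(P_1)\oplus\ad(P_2)$ into Lie algebra bundles (each an ideal bundle), with $\ad(P_i)\cong P_i\times_{\Ad}\gg_i$, and the covariant derivative induced by $\gamma$ on $\ad(P)$ preserves each summand and restricts on $\ad(P_i)$ to the covariant derivative $\nabla^{\gamma_i}$ induced by $\gamma_i$ — both depending only on $\gamma^{(i)}$, which is the pullback of $\gamma_i$.

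Next I would transport the curvature. From the structure equation $\Omega^\gamma=\d\gamma+\tfrac12\,\gamma\wedge\gamma$ together with $[\gg_1,\gg_2]=0$ one gets $\Omega^\gamma=\Omega^{\gamma_1}+\Omega^{\gamma_2}$, where $\Omega^{\gamma_i}$ is the $\gg_i$-valued curvature form of $\gamma_i$; equivalently $R^\gamma=R^{\gamma_1}+R^{\gamma_2}$ with $R^{\gamma_i}$ a section of $\Lambda^2\T N\otimes\ad(P_i)$. Reading the hypothesis that $R^\gamma$ is $\nabla^N\otimes\nabla^\gamma$-parallel along the parallel decomposition $\Lambda^2\T N\otimes\ad(P)=\bigoplus_i\bigl(\Lambda^2\T N\otimes\ad(P_i)\bigr)$ then shows that each $R^{\gamma_i}$ is $\nabla^N\otimes\nabla^{\gamma_i}$-parallel, which is condition $(iv)$ of Definition \ref{las1} for $(g^N,P_i,\gamma_i)$. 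Finally, I would equip $\gg_i$ with the restriction of $\langle.,.\rangle$ (still $\ad_{\gg_i}$-invariant since $\gg_i$ is an ideal) and set $\psi_i:=\psi|_{\ad(P_i)}$: being the restriction of a Lie algebra bundle morphism to a subalgebra bundle, $\psi_i$ is again a Lie algebra bundle morphism $\ad(P_i)\to\Lambda^2\T N$, and for a section $\sigma$ of $\ad(P_i)$ and $X,Y\in\T N$ the orthogonality of both splittings gives $\langle\sigma,R^\gamma_{X,Y}\rangle=\langle\sigma,R^{\gamma_i}_{X,Y}\rangle$, whence $g^N(\psi_i(\sigma),X\wedge Y)=g^N(\psi(\sigma),X\wedge Y)=-\langle\sigma,R^{\gamma_i}_{X,Y}\rangle$, which is condition $(v)$. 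Thus $(g^N,P_i,\langle.,.\rangle|_{\gg_i},\gamma_i,\psi_i)$ is a parallel $\gg_i$-structure, canonically attached to the given one.

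The only step that requires real care is the descent in the second paragraph — verifying that $\gamma^{(i)}$ and the $\gamma$-induced connection on $\ad(P_i)$ genuinely pass to the quotient bundle $P_i$ — but once the hypothesis $G=G_1\times G_2$ is in hand this is standard principal-bundle bookkeeping, and the curvature splitting and the identification of $\psi_i$ are then immediate consequences of working with orthogonal direct sums.
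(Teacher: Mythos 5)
Your proposal is correct and follows essentially the same route as the paper: splitting $\gamma$ along $\gg=\gg_1\oplus\gg_2$, descending the $\gg_i$-component to a connection on $P_i=P/G_{3-i}$, using the parallel orthogonal decomposition $\ad(P)=\ad(P_1)\oplus\ad(P_2)$ with $R^\gamma=R^{\gamma_1}+R^{\gamma_2}$, and restricting $\psi$ to obtain $\psi_i$. You merely spell out the descent and parallelism checks in more detail than the paper does, which is fine.
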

\begin{proof} Let $(g^N,P,\gamma,\psi)$ be a parallel $\gg$-structure on $N$. One can write the connection form $\gamma=\gamma_1+\gamma_2$ with $\gamma_i\in\Omega^1(P,\gg_i)$ for $i=1,2$. The $G$-equivariance property of $\gamma$ 
$$g^*\gamma=Ad_{g^{-1}}\gamma,\qquad\forall g\in G$$
shows that $\gamma_i$ are $G_i$-equivariant, $\gamma_1$ is $G_2$-invariant and $\gamma_2$ is $G_1$-invariant. Then $P_1:=P/G_2$  and $P_2:=P/G_1$ are $G_i$-principal bundles over $N$ and $\gamma_i$ projects to connection forms (also denoted by $\gamma_i$) on $P_i$. The adjoint bundle $\ad(P)$ is naturally identified to $\ad(P_1)\oplus \ad(P_2)$ (and this decomposition is parallel with respect to the covariant derivative induced by $\gamma)$. For every $X,Y\in \T N$ one has $R^\gamma_{X,Y}=R^{\gamma_1}_{X,Y}+R^{\gamma_2}_{X,Y}$. Denoting by $\iota$ the natural embedding of $\ad (P_1)$ into $\ad(P)$, we see that the composition $\psi_1:=\psi\circ\iota$ is a parallel Lie algebra bundle morphism from $\ad (P_1)$ to $\Lambda^2\T N\simeq\so(\T N)$ which clearly verifies \eqref{epsi}.
\end{proof}

In the sequel, we will say that the parallel $\gg_1$-structure obtained in this way from an ideal $\gg_1$ of $\gg$ is a {\em reduction} of the initial parallel $\gg$-structure to the ideal $\gg_1$.

\subsection{Restriction to Riemannian factors} 

\begin{elem}\label{pg}
Assume that $(N,g^N)$ is the Riemannian product of $(N_1,g^{N_1})$ and $(N_2,g^{N_2})$. Then every parallel $\gg$-structure $(P,\gamma,\psi)$ on $(N,g^N)$ with the property that 
\be\label{imp}
\psi(\ad(P) )\subset\Lambda^2\T N_1\oplus \Lambda^2\T N_2\subset \Lambda^2\T N\ee 
induces parallel $\gg$-structures on the factors $(N_i,g^{N_i})$. 
\end{elem}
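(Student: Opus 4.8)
The plan is to show that under the decoupling hypothesis \eqref{imp}, every piece of the data defining the parallel $\gg$-structure on $N=N_1\times N_2$ splits as an orthogonal sum of corresponding pieces along the two factors. First I would analyze the curvature $R^\gamma$. Since $\psi=-(R^\gamma)^*$ is the metric adjoint of $-R^\gamma$ (Definition \ref{las1} $(v)$), the hypothesis $\psi(\ad(P))\subset\Lambda^2\T N_1\oplus\Lambda^2\T N_2$ is equivalent, by \eqref{epsi}, to the vanishing of the mixed component of $R^\gamma$, i.e. $R^\gamma_{X_1,X_2}=0$ for all $X_i\in\T N_i$. Thus $R^\gamma$ splits as $R^{\gamma}=R^{\gamma,1}+R^{\gamma,2}$ with $R^{\gamma,i}\in\Lambda^2\T N_i\otimes\ad(P)$. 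Combined with the fact that $R^\gamma$ is parallel with respect to $\nabla^N\otimes\nabla^\gamma$ and that $\nabla^N$ is the product connection, this already forces a compatible splitting of $\ad(P)$: one sets $\ad_i(P):=$ the $\nabla^\gamma$-parallel sub-bundle of $\ad(P)$ generated by the image of $R^{\gamma,i}$ together with its $\nabla^\gamma$-derivatives in $\T N_i$-directions (or more invariantly, the Lie algebra ideal bundle generated by $\mathrm{im}(R^{\gamma,i})$), and checks that $\ad(P)=\ad_1(P)\oplus \ad_2(P)$ as parallel Lie algebra bundles over $N$.

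The next step is to descend this to the principal bundle level. Locally one can trivialize so that the holonomy of $\gamma$ decomposes, or argue directly: since $R^{\gamma,1}$ has no $\T N_2$-legs and is $\nabla^\gamma$-parallel, the component $\gamma^{(1)}$ of $\gamma$ valued in the fiberwise span of $\ad_1(P)$ restricts, for each fixed point of $N_2$, to a connection whose pullback to the slice $N_1\times\{pt\}$ has the required parallel curvature; and this connection is independent of the $N_2$-coordinate because its curvature and all horizontal derivatives thereof are. This gives a $G_1$-principal bundle $P_1\to N_1$ with connection $\gamma_1$ (where $G_1$ is the simply connected group with Lie algebra the fiber of $\ad_1(P)$, or the appropriate quotient dictated by the holonomy), and symmetrically $P_2\to N_2$; the scalar product on $\gg$ restricts to $\ad_\gg$-invariant scalar products on $\gg_1$ and $\gg_2$. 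Finally one defines $\psi_i:\ad(P_i)\to\Lambda^2\T N_i$ as the metric adjoint of $-R^{\gamma_i}$, which is automatically a parallel Lie algebra bundle morphism satisfying \eqref{epsi} on $N_i$ because $\psi$ was, and because the splitting $\ad(P)=\ad_1(P)\oplus\ad_2(P)$ is by construction a splitting of Lie algebra bundles compatible with $\psi$. One should also record that the original structure is recovered, up to the natural identifications, as the product $\gg$-structure of the two pieces in the sense of §\ref{product}, with $\gg=\gg_1\oplus\gg_2$.

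The main obstacle I expect is the passage from the infinitesimal/bundle-valued splitting of $R^\gamma$ and $\ad(P)$ to an honest splitting of the principal bundle $P$ and of its structure group $G$ — in particular making sense of "$P_i:=P/G_j$" when a priori $G$ is only an abstract compact group with a distinguished $\ad$-invariant orthogonal decomposition $\gg=\gg_1\oplus\gg_2$. The cleanest route is to note that an $\ad$-invariant orthogonal decomposition of the Lie algebra of a compact group into two ideals corresponds (on the identity component, which suffices since everything is local) to a local product decomposition of the group, $G\simeq G_1\times G_2$, so that $P$ reduces locally to $P_1\times_{N} P_2$ for principal $G_i$-bundles $P_i$; the $G_j$-invariance of $\gamma_i$, forced by the ideal structure exactly as in the proof of Lemma \ref{g1}, then lets $\gamma_i$ and $R^{\gamma,i}$ descend. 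The remaining verifications — that $R^{\gamma_i}$ is $\nabla^{N_i}\otimes\nabla^{\gamma_i}$-parallel and that $\psi_i$ satisfies \eqref{epsi} — are then immediate restrictions of the corresponding statements on $N$, using that $\nabla^N=\nabla^{N_1}\oplus\nabla^{N_2}$ and that differentiating a $\T N_i$-valued parallel tensor in a $\T N_j$-direction ($i\neq j$) gives zero.
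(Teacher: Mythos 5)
Your opening observation is fine: by \eqref{epsi}, condition \eqref{imp} is indeed equivalent to the vanishing of the mixed curvature, $R^\gamma_{X_1,X_2}=0$ for $X_i\in\T N_i$. But the backbone of your argument --- the splitting $\ad(P)=\ad_1(P)\oplus\ad_2(P)$ into the ideal bundles generated by $\mathrm{im}(R^{\gamma,1})$ and $\mathrm{im}(R^{\gamma,2})$, followed by a local splitting $G\simeq G_1\times G_2$, $P\simeq P_1\times_N P_2$, and a descent of each piece to one factor --- does not follow from \eqref{imp} and is false in the simplest examples. Take the parallel $\u(1)$-structure of Example \ref{e78} (ii) on a product $N=N_1\times N_2$ of two K\"ahler manifolds with $m=1$, $s=2$, $c_{11}=c_{12}=1$, so that $\psi(\xi_1)=-(\Omega_1+\Omega_2)$ and $R^\gamma=\Omega_1+\Omega_2$. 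Condition \eqref{imp} holds, but $\ad(P)$ has one-dimensional fibres and both $R^{\gamma,1}$ and $R^{\gamma,2}$ are nonzero, so the two sub-bundles you define both equal all of $\ad(P)$ instead of being complementary; there is no orthogonal decomposition of $\gg$ into ideals mapped by $\psi$ into $\Lambda^2\T N_1$ and $\Lambda^2\T N_2$ respectively (this structure is non-degenerate, cf.\ Lemma \ref{cdec}). For the same reason your closing claim that the original structure is recovered as the product, in the sense of \S\ref{product}, of the two induced pieces is false in this example. A further weak point: the assertion that the restricted connection is independent of the $N_2$-coordinate because its curvature and the horizontal derivatives of the curvature are, is not a valid argument, since a connection is not determined by its curvature.

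The statement actually asks for much less: the induced structure on each factor is a parallel $\gg$-structure for the \emph{same} Lie algebra $\gg$ and the same group $G$; no splitting of $\gg$, $G$, $P$ or $\ad(P)$ is needed or claimed. The paper's proof is a short restriction argument: a point of $N_2$ gives an isometric embedding $N_1\hookrightarrow N$; pulling back $P$ and $\gamma$ along it yields a $G$-principal bundle $P_1\to N_1$ with connection $\gamma_1$, whose curvature on vectors tangent to $N_1$ is the restriction of $R^\gamma$, and condition (iv) of Definition \ref{las1} restricts because the slice is totally geodesic and $\nabla^N$ preserves $\T N_1$. Hypothesis \eqref{imp} is used only to ensure that $\psi$ followed by the projection onto $\Lambda^2\T N_1$ is still a \emph{Lie algebra} bundle morphism (inside $\Lambda^2\T N$ the two summands $\Lambda^2\T N_1$ and $\Lambda^2\T N_2$ commute, so projecting onto either one is a Lie algebra map on this subalgebra); the resulting $\psi_1$ is parallel and satisfies \eqref{epsi} with respect to $R^{\gamma_1}$ simply by restricting \eqref{epsi} on $N$ to $X,Y\in\T N_1$. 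So the correct route keeps the full structure group on each factor, which is exactly what allows the lemma to apply to non-degenerate structures such as the diagonal $\u(1)$-example above.
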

\begin{proof} Every point of $N_2$ defines an isometric embedding of $(N_1,g^{N_1})$ into $(N,g^N)$. By pull-back through this embedding one obtains a $G$-principal bundle $P_1$ over $N_1$ with connection $\gamma_1$. Moreover, the condition \eqref{imp} shows that $\psi$ defines by restriction a Lie algebra bundle morphism $\psi_1:\ad(P_1)\to \Lambda^2\T N_1$, which is clearly still parallel and satisfies \eqref{epsi}. The proof for $N_2$ is similar.
\end{proof}
Note that the condition \eqref{imp} is automatically satisfied for the factors of $N$ in the standard de Rham decomposition, see Lemmas \ref{le1} and \ref{le2} below.

\subsection{Whitney products} As a converse to Lemma \ref{g1} we have the following:
\begin{elem} \label{wp} Let $G_1$ and $G_2$ be compact Lie groups with Lie algebras $\gg_1$ and $\gg_2$ endowed with bi-invariant scalar products, and let $G:=G_1\times G_2$, with Lie algebra $\gg=\gg_1\oplus\gg_2$ endowed with the direct sum scalar product. If $(P_1,\gamma_1,\psi_1)$ and $(P_1,\gamma_1,\psi_1)$ are parallel $\gg_1$- and $\gg_2$-structures on $(N,g^N)$ such that $\psi_1(\ad(P_1))$ commutes with $\psi_2(\ad(P_2))$, then the Whitney product $(P_1\times P_2,\gamma_1+\gamma_2,\psi_1+\psi_2)$ is a parallel $\gg$-structure on $(N,g^N)$.
\end{elem}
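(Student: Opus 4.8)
The plan is to verify each of the five defining properties of a parallel $\gg$-structure (Definition \ref{las1}) for the candidate triple $(P_1\times P_2,\gamma_1+\gamma_2,\psi_1+\psi_2)$. Write $P:=P_1\times_N P_2$ for the fiber product, which is a $G$-principal bundle over $N$ with $G=G_1\times G_2$; its adjoint bundle satisfies $\ad(P)\cong\ad(P_1)\oplus\ad(P_2)$ as a Lie algebra bundle, since $\gg=\gg_1\oplus\gg_2$ is a direct sum of Lie algebras. Properties $(i)$, $(ii)$ and $(iii)$ of Definition \ref{las1} are immediate: the metric $g^N$ is the given one, $P$ is a $G$-principal bundle, and the scalar product on $\gg$ is the orthogonal direct sum of the bi-invariant scalar products on $\gg_1$ and $\gg_2$, hence $\ad_\gg$-invariant.

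For property $(iv)$ one uses that the connection $\gamma=\gamma_1+\gamma_2$ has curvature $R^\gamma=R^{\gamma_1}\oplus R^{\gamma_2}\in\ad(P_1)\oplus\ad(P_2)$, and the connection induced by $\gamma$ on $\ad(P)$ is the direct sum of the connections induced by $\gamma_1$ and $\gamma_2$ on $\ad(P_1)$ and $\ad(P_2)$ respectively. Since $R^{\gamma_1}$ and $R^{\gamma_2}$ are each parallel with respect to $\nabla^N\otimes\nabla^{\gamma_i}$ by hypothesis, their sum is parallel with respect to $\nabla^N\otimes\nabla^\gamma$. For property $(v)$, define $\psi:=\psi_1+\psi_2\colon\ad(P_1)\oplus\ad(P_2)\to\Lambda^2\T N$; it is $\nabla^N\otimes\nabla^\gamma$-parallel as a sum of parallel morphisms, and the adjointness identity \eqref{epsi} for $\psi$ follows by summing the corresponding identities for $\psi_1$ and $\psi_2$ over the two summands of $\ad(P)$, since $R^\gamma$ decomposes correspondingly.

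The only genuinely non-trivial point — and where the hypothesis that $\psi_1(\ad(P_1))$ commutes with $\psi_2(\ad(P_2))$ is used — is checking that $\psi$ is a \emph{Lie algebra bundle morphism}. On each summand $\psi$ restricts to $\psi_i$, which is a Lie algebra morphism by assumption; for mixed brackets, $[\sigma_1,\sigma_2]=0$ in $\ad(P)=\ad(P_1)\oplus\ad(P_2)$ whenever $\sigma_i\in\ad(P_i)$, because $\gg_1$ and $\gg_2$ are commuting ideals of $\gg$, while $[\psi_1(\sigma_1),\psi_2(\sigma_2)]=0$ in $\so(\T N)$ precisely by the commutativity hypothesis $[\psi_1(\ad(P_1)),\psi_2(\ad(P_2))]=0$. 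Hence $\psi([\sigma_1,\sigma_2])=0=[\psi(\sigma_1),\psi(\sigma_2)]$, and combined with the morphism property on each summand this shows $\psi$ is a Lie algebra bundle morphism. I expect this verification of bracket-compatibility to be the main (indeed the only) point requiring the extra hypothesis; the rest is a routine direct-sum bookkeeping, entirely parallel to the proof of Lemma \ref{g1}.
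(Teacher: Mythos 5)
Your proposal is correct and follows essentially the same route as the paper: the paper's proof simply notes that everything is tautological once one observes that $\psi_1+\psi_2:\ad(P_1)\oplus\ad(P_2)\to\Lambda^2\T N$ is a Lie algebra bundle morphism thanks to the commutation assumption, which is exactly the point you isolate and verify. Your more detailed check of conditions $(i)$--$(v)$ of Definition \ref{las1} is just an expanded write-up of the same argument.
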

\begin{proof} Everything is tautological, by noticing that the map
$$\psi_1+\psi_2:\ad(P_1\times P_2)=\ad(P_1)\oplus\ad(\P_2)\to\Lambda^2\T N$$ 
is a Lie algebra bundle morphism due to the commutation assumption.
\end{proof}

\begin{ede}\label{ndg} A parallel $\gg$-structure is called {\em non-degenerate} if for every orthogonal and parallel decomposition $\T N=D_1\oplus D_2$ and orthogonal decomposition $\gg=\gg_1\oplus \gg_2$ with $\gg_i$ Lie sub-algebras of $\gg$ satisfying $\psi(u\xi_1)\in\Lambda^2D_1$ and $\psi(u\xi_2)\in\Lambda^2D_2$ for every $u\in P$, $\xi_1\in\gg_1$ and $\xi_2\in\gg_2$, then either $D_1=0$ and $\gg_1=0$, or $D_2=0$ and $\gg_2=0$.\end{ede}

Equivalently, a parallel $\gg$-structure is non-degenerate if it is not locally a product of parallel $\gg$-structures, as described in §\ref{product}.

\begin{ere}\label{injective}
The morphism $\psi$ of a non-degenerate parallel $\gg$-structure is injective. Indeed, let $\gg_1\subset \gg$ be defined by $P\times_\Ad\gg_1=\ker\psi$, $\gg_2:=\gg_1^\perp$, $D_1=0$ and $D_2=\T N$. The conditions in Definition \ref{ndg} are clearly satisfied, and since $D_2\ne 0$, we necessarily have $\gg_1=0$.
\end{ere}

\begin{epr}\label{p76}
The (locally defined) base $N$ of the standard submersion of an indecomposable geometry with torsion of special type $(M,g,\tau)$ carries a non-degenerate parallel $\gg$-structure. Conversely, every non-degenerate parallel $\gg$-structure $(g^N,P,\gg,\gamma,\la\cdot,\cdot\ra,\psi)$ on a manifold $N$ induces an indecomposable geometry with torsion of special type on the total space $P$.
\end{epr}
\begin{proof}
By Proposition \ref{p75}, we only have to check that the indecomposability of $(M,g,\tau)$ is equivalent to the non-degeneracy of $(g^N,P,\gg,\gamma,\psi)$. Assume first that $(M,g,\tau)$ is indecomposable, and suppose that there is an orthogonal and parallel decomposition $\T N=D_1\oplus D_2$ and an orthogonal decomposition $\gg=\gg_1\oplus \gg_2$ with $\gg_i$ Lie sub-algebras of $\gg$ satisfying $\psi(u\xi_1)\in\Lambda^2D_1$ and $\psi(u\xi_2)\in\Lambda^2D_2$ for every $u\in P$, $\xi_1\in\gg_1$ and $\xi_2\in\gg_2$. Consider for $i=1,2$ the distributions $T_i$ on $M=P$ spanned by the horizontal lift of $D_i$ and by fundamental vector fields $\xi^*$ with $\xi\in \gg_i$. By \eqref{ta1}--\eqref{ta3}, if $T_1$ and $T_2$ are non-trivial, $(M,g,\tau)$ would be decomposable, according to Definition \ref{ri}. On the other hand $(M,g,\tau)$ is assumed to be indecomposable by Definition \ref{sgwt}, so we necessarily have $T_1=0$ or $T_2=0$. This shows that the $\gg$-structure of $N$ is non-degenerate.

Conversely, suppose that the parallel $\gg$-structure of $N$ is non-degenerate, and assume that $\T P=T_1\oplus T_2$ is an orthogonal $\nabla^\tau$-parallel decomposition of the tangent bundle of $P$, such that $\tau\in\Lambda^3 T_1\oplus \Lambda^3 T_2$. We denote by $\H:=\ker(\gamma)$ the horizontal distribution of $P$, and by $\V$ the vertical distribution (tangent to the fibers). Then $\T P=\H\oplus\V$ is another orthogonal $\nabla^\tau$-parallel decomposition, and from \eqref{tau} we have $\tau(\H,\H)\subset \V$, $\tau(\V,\V)\subset \V$ and $\tau(\V,\H)\subset \H$. Let $\V'$ be the set of vectors $V\in\V$ such $\tau_V|_\H$ is non-degenerate. For $V\in\V'$ we decompose $V=Y_1+Y_2$ with $Y_1\in T_1$ and $Y_2\in T_2$, and then we decompose $Y_i=X_i+V_i$ with $X_i\in \H$ and $V_i\in \V$ for $i=1,2$. We thus have $V=V_1+V_2$ and $X_1+X_2=0$.
Since $\tau\in\Lambda^3 T_1\oplus \Lambda^3 T_2$ we obtain for every $X\in H$:
\bea 0&=&\tau(Y_1,Y_2,X)=\tau(V_1+X_1,V_2-X_1,X)=\tau(V_1,V_2,X)+\tau(X_1,V_2,X)-\tau(V_1,X_1,X)\\&=&-\tau(V_1+V_2,X_1,X)=-\tau_V(X_1,X).\eea
The assumption that $\tau_V|_\H$ is non-degenerate thus shows that $X_1=0$, whence $V=V_1+V_2$. This shows that $(T_1\cap \V)\oplus (T_2\cap \V)$ contains the set $\V'$, which is dense in $\V$, so $$(T_1\cap \V)\oplus (T_2\cap \V)=\V.$$
The orthogonal complement of $T_i\cap \V$ in $T_i$ is clearly contained in $T_i\cap \H$, so a dimension count immediately shows that 
$$(T_1\cap \H)\oplus (T_2\cap \H)=\H.$$
Denoting $T_i\cap \V=:\V_i$ and $T_i\cap \H=:\H_i$, we thus get orthogonal $\nabla^\tau$-parallel decompositions $\V=\V_1\oplus\V_2$ and $\H=\H_1\oplus\H_2$. The assumption $\tau\in\Lambda^3 T_1\oplus \Lambda^3 T_2$ implies $\tau(\V_1,\V_2)=0$, whence $\tau(\V_i,\V_i)\subset\V_i$ for $i=1,2$. By \eqref{ta3}, this means that $\V_i$ are involutive distributions. Recalling that $\V=P\times_\Ad \gg$, this shows that the Lie algebra $\gg$ decomposes in an orthogonal direct sum of Lie sub-algebras $\gg=\gg_1\oplus\gg_2$ such that $\V_i=P\times_\Ad \gg_i$. 

Being $\nabla^\tau$-parallel, the distributions $\H_i$ project to parallel distributions $D_i$ on $N$. Indeed, if $V$ is a section of $\V$ and $X_1$ is a section of $\H_1$, then the horizontal part of $[V,X_1]$ reads
$$[V,X_1]_\H=(\nabla_VX_1-\nabla_{X_1}V)_\H=(\nabla^\tau_VX_1-\nabla^\tau_{X_1}V+2\tau(V,X_1))_\H=(\nabla^\tau_VX_1+2\tau(V,X_1))_\H\in\H_1.$$

The fact that $\tau(V_1,X_2)=0$ for every $V_1\in \V_1$ and $X_2\in\H_2$ is equivalent by \eqref{ta2} with the condition $\psi(\V_1)\subset\Lambda^2D_1$. Similarly, $\psi(\V_2)\subset\Lambda^2D_2$. From Definition \ref{ndg} we thus obtain that $\V_i=\H_i=0$ for some $i\in\{1,2\}$. This shows that $T_i=0$, so $(g,\tau)$ is indecomposable.
\end{proof}

\begin{exe}\label{e78} (i) If $(g^N,P,\gamma,\la\cdot,\cdot\ra,\psi)$ is a parallel $\u(1)$-structure on $N$, $\ad(P)$ has a global parallel section whose image by $\psi$ is a parallel 2-form on $N$. Conversely, if $\Omega$ is a parallel 2-form on $(N,g^N)$, let $\eta\in\Omega^1(N)$ be any locally defined primitive of $\Omega$. We define $P:=N\times \RM$ (viewed as a principal $\RM$-bundle) and $\gamma:=\d t+\eta\in\Omega^1(P)$ (everything is locally defined, and we omit writing down the pull-back signs, in order to keep notation simple). The adjoint bundle $\ad(P)$ is trivial, generated by a section called $1$. We define the parallel morphism $\psi:\ad(P)\to \Lambda^2 \T N$ by $\psi(1):=-\Omega$.
Then $(P,\gamma,\psi)$ is a non-degenerate $\u(1)$-structure on $(N,g^N)$. Indeed, $R^\gamma$ is equal to $\d\gamma=\Omega$, so it is parallel as a section in the trivial bundle $\ad(P)$, and the map $\psi$ satisfies \eqref{epsi} and is clearly a Lie algebra bundle morphism since the fibers of $\ad (P)$ are 1-dimensional. The non-degeneracy of the $\u(1)$-structure is clearly equivalent to the non-degeneracy of the corresponding 2-form. A non-degenerate parallel $\u(1)$-structure thus defines a Kähler structure on $N$, which is moreover unique up to sign when $(N,g^N)$ is irreducible.

(ii) More generally, if $(g^N,P,\gamma,\la\cdot,\cdot\ra,\psi)$ is a parallel $\u(1)^m$-structure on $N$, $\ad(P)$ is spanned by $m$ parallel sections, whose images by $\psi$ are $m$ parallel $2$-forms on $N$ whose associated endomorphisms mutually commute. Moreover these endomorphisms have no common kernel if the $\u(1)^m$-structure is non-degenerate. By diagonalizing them simultaneously  applying de Rham's decomposition theorem, we see that $(N,g^N)$ is a Riemannian product of Kähler manifolds. Conversely,  let  $N=N_1\times\ldots\times N_s$ be a Riemannian product of Kähler manifolds with fundamental 2-forms $\Omega_\alpha$, and let $c_{i\alpha}$ be real numbers for $\alpha\in\{1,\ldots,s\}$ and $i\in\{1,\ldots,m\}$. We consider the parallel forms $F_i:=\sum_\alpha c_{i\alpha}\Omega_\alpha$ on $N$ and some locally defined primitive $\eta_i\in\Omega^1(N)$ of $\Omega_i$ and denote by $P:=N\times \RM^m$, and $\gamma_i:=\d t_i+\eta_i\in\Omega^1(P)$. Then $\gamma_i$ are the components of a connection form on $P$ whose curvature form has components $F_i$. For every scalar product on $\u(1)^m$ one obtains a parallel $\u(1)^m$-structure on $N$ by choosing a parallel orthonormal basis $\xi_1,\ldots,\xi_m$  of $\ad(P)$ and defining $\psi(\xi_j):=-F_j$, so that \eqref{epsi} holds. 
\end{exe}

Once we have fixed Kähler structures on the factors $N_\alpha$, a parallel $\u(1)^m$-structure on $N=N_1\times\ldots\times N_s$ is thus determined by the $m\times s$ real matrix $\{c_{i\alpha}\}$. It is possible to express the non-degeneracy condition in terms of this matrix:

\begin{elem}\label{cdec} The above defined parallel $\u(1)^m$-structure is degenerate if and only if there exists a partition $\{1,\ldots,s\}=A\sqcup B$ and an orthogonal decomposition $\RM^m=V_1\oplus V_2$ with $V_1,V_2\ne 0$, such that $(c_{1\alpha},\ldots,c_{m\alpha})\in V_1$ for every $\alpha\in A$ and $(c_{1\beta},\ldots,c_{m\beta})\in V_2$ for every $\beta\in B$.
\end{elem}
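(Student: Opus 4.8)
The plan is to translate both sides of the equivalence into linear algebra about the columns of the matrix $\{c_{i\alpha}\}$. Since $\u(1)^m$ is abelian, its adjoint representation is trivial, so $\ad(P)=N\times\RM^m$ canonically, with the constant parallel orthonormal frame $\xi_1,\ldots,\xi_m$; by construction $\psi(\xi_j)=-F_j=-\sum_\alpha c_{j\alpha}\Omega_\alpha$, so writing $c_\alpha:=(c_{1\alpha},\ldots,c_{m\alpha})\in\RM^m$ for the $\alpha$-th column one gets, for every $a\in\RM^m$,
\[
\psi(a)=-\sum_{\alpha=1}^{s}\langle a,c_\alpha\rangle\,\Omega_\alpha .
\]
As the $\ad(P)$-frame is parallel, $u\cdot a$ is independent of $u\in P$, so the conditions in Definition \ref{ndg} may be read directly in terms of $\psi$ on constant sections. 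I will use throughout that the $\Omega_\alpha$ are mutually orthogonal parallel $2$-forms, each supported on $\T N_\alpha$ and nondegenerate there; consequently, for any parallel orthogonal splitting $\T N=D_1\oplus D_2$, if a linear combination $\sum_\gamma\lambda_\gamma\Omega_\gamma$ lies in $\Lambda^2D_1$ then so does each term $\lambda_\gamma\Omega_\gamma$, since the $\Omega_\gamma$ occupy pairwise disjoint blocks (and likewise with $D_2$ in place of $D_1$).

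For ``$\Leftarrow$'', given a partition $\{1,\ldots,s\}=A\sqcup B$ and an orthogonal splitting $\RM^m=V_1\oplus V_2$ with $V_1,V_2\neq0$, $c_\alpha\in V_1$ for $\alpha\in A$ and $c_\beta\in V_2$ for $\beta\in B$, I would set $D_1:=\bigoplus_{\alpha\in A}\T N_\alpha$, $D_2:=\bigoplus_{\beta\in B}\T N_\beta$ and $\gg_i:=V_i$ (automatically Lie subalgebras). For $a\in V_1$ one has $\langle a,c_\beta\rangle=0$ for every $\beta\in B$, hence $\psi(a)=-\sum_{\alpha\in A}\langle a,c_\alpha\rangle\Omega_\alpha\in\Lambda^2D_1$, and symmetrically for $a\in V_2$. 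Thus $(D_1,D_2,\gg_1,\gg_2)$ satisfies the hypotheses of Definition \ref{ndg}, and since $\gg_1=V_1\neq0$ and $\gg_2=V_2\neq0$ neither of the two ``trivial'' alternatives there holds; so the $\u(1)^m$-structure is degenerate.

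For ``$\Rightarrow$'', I would start from splittings $\T N=D_1\oplus D_2$ and $\gg=\gg_1\oplus\gg_2$ witnessing degeneracy, with $\psi(\gg_i)\subset\Lambda^2D_i$ and neither side trivial, and replace $D_1\oplus D_2$ by one adapted to the product $N=N_1\times\ldots\times N_s$. For $a\in\gg_1$ the relation $\psi(a)\in\Lambda^2D_1$ reads $-\sum_\gamma\langle a,c_\gamma\rangle\Omega_\gamma\in\Lambda^2D_1$, so by the block-disjointness above, for each $\gamma$ either $\langle a,c_\gamma\rangle=0$ or $\Omega_\gamma\in\Lambda^2D_1$; the latter, $\Omega_\gamma$ being nondegenerate on $\T N_\gamma$, forces $\T N_\gamma\subset D_1$. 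Hence $c_\gamma\perp\gg_1$ whenever $\T N_\gamma\not\subset D_1$, and symmetrically $c_\gamma\perp\gg_2$ whenever $\T N_\gamma\not\subset D_2$. Put $A:=\{\gamma:\T N_\gamma\subset D_1\}$ and $B:=\{1,\ldots,s\}\setminus A$; since $D_1\cap D_2=0$, $\alpha\in A$ implies $\T N_\alpha\not\subset D_2$, whence $c_\alpha\in\gg_1$ for $\alpha\in A$ and $c_\beta\in\gg_2$ for $\beta\in B$. Taking $V_i:=\gg_i$ then yields the required combinatorial data, the non-triviality of the original witness giving $V_1,V_2\neq0$: some factor $\T N_\alpha$, or some nonzero column, lands on each side, the ``free'' factors (those with $c_\alpha=0$) being distributable at will, and a short case check disposes of the possibility $\gg_i=0$.

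The step I expect to be the main obstacle is precisely this last passage: the definition of degeneracy only provides an \emph{arbitrary} parallel orthogonal splitting $\T N=D_1\oplus D_2$, which may cut diagonally across isometric or flat Kähler factors, whereas the combinatorial statement requires one compatible with the given product. What makes the reduction work is the block-orthogonality of the $\Omega_\alpha$, forcing such a splitting either to annihilate a column $c_\alpha$ or to swallow the whole factor $\T N_\alpha$; once this is in place the claim becomes the elementary combinatorial equivalence about the matrix $\{c_{i\alpha}\}$ spelled out above.
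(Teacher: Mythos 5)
Your ``if'' direction is exactly the paper's proof: the same choice $D_1=\bigoplus_{\alpha\in A}\T N_\alpha$, $D_2=\bigoplus_{\beta\in B}\T N_\beta$, $\gg_j:=V_j$, and the same computation showing $\psi(V_1)\subset\Lambda^2D_1$, $\psi(V_2)\subset\Lambda^2D_2$, so that Definition \ref{ndg} is violated because $\gg_1,\gg_2\ne0$. For the ``only if'' direction the paper merely says it ``can be proved similarly'', and the core of your argument is the right one: writing $\psi(a)=-\sum_\gamma\langle a,c_\gamma\rangle\Omega_\gamma$ and contracting with vectors of $D_2$, the block structure of the $\Omega_\gamma$ and their nondegeneracy on $\T N_\gamma$ give that for each $\gamma$ either $\langle a,c_\gamma\rangle=0$ or $\T N_\gamma\subset D_1$; with $A:=\{\gamma:\ \T N_\gamma\subset D_1\}$ this yields $c_\alpha\in\gg_1$ for $\alpha\in A$ and $c_\beta\in\gg_2$ for $\beta\notin A$, as you say.

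The step you defer to ``a short case check'' --- that $V_i:=\gg_i\ne0$ --- is, however, the only delicate point, and it cannot be disposed of in the stated generality. A degeneracy witness may well have $\gg_1=0$ and $D_1\ne0$; such witnesses exist exactly when the $F_i$ have a common kernel, i.e.\ when some column $c_\alpha$ vanishes (take $D_1:=\T N_\alpha$, $\gg_1:=0$, $\gg_2:=\u(1)^m$). In that situation the conclusion of the lemma can genuinely fail: for $m=1$, $s=2$, $c_{11}=1$, $c_{12}=0$ the structure is degenerate, yet $\RM^1$ admits no orthogonal splitting into two nonzero subspaces, so no amount of redistributing ``free'' factors saves the statement. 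Thus your case check succeeds precisely under the (implicit) hypothesis that no column vanishes, and then you should say why: if $\gg_1=0$, the condition $\psi(\gg)=\psi(\gg_2)\subset\Lambda^2D_2$ forces $D_1\subset\bigcap_i\ker F_i=\bigoplus_{\{\alpha:\,c_\alpha=0\}}\T N_\alpha=0$, contradicting the nontriviality of the witness; hence both $\gg_1\ne0$ and $\gg_2\ne0$ and your construction closes the proof. This imprecision is inherited from the statement itself (whose converse the paper does not prove), so the honest fix is to add the hypothesis $c_\alpha\ne0$ for all $\alpha$ (or to treat the vanishing-column case as a separate, trivially degenerate, case) rather than to hope the case analysis resolves itself.
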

\begin{proof} Consider $A,B,V_1,V_2$ satisfying the above condition. We define 
$$D_1:=\oplus_{\alpha\in A}\T N_\alpha, \qquad D_2:=\oplus_{\beta\in B}\T N_\beta,\qquad \gg_j:=\{\sum_{i=1}^mx_i\xi_i\ |\ x\in V_j\},\ j=1,2\ .$$
For every $x\in V_1$ we have 
$$\psi(\sum_i x_i\xi_i)=\sum_ix_i F_i=\sum_{i=1}^m\sum_{\alpha=1}^sx_ic_{i\alpha}\Omega_\alpha=\sum_{\alpha\in A}\sum_{i=1}^mx_ic_{i\alpha}\Omega_\alpha\ ,$$
showing that $\psi(\gg_1)$ vanishes on $D_2$, and similarly $\psi(\gg_2)$ vanishes on $D_1$. Since $\gg_1$ and $\gg_2$ are non vanishing, Lemma \ref{ndg} shows that the $\u(1)^m$-structure is degenerate. The converse statement can be proved similarly.
\end{proof}

\begin{ecor} If a parallel $\u(1)^m$-structure on $N=N_1\times\ldots\times N_s$ constructed as before is non-degenerate, then $m\le s$.
\end{ecor}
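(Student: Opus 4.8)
The plan is to prove the contrapositive: if $m>s$, then the $\u(1)^m$-structure described before the corollary is degenerate. Everything reduces to a statement about the defining matrix $\{c_{i\alpha}\}$, and Lemma \ref{cdec} is tailor-made for this.

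First I would record the relevant linear-algebra datum. To the matrix $\{c_{i\alpha}\}$ associate its $s$ columns $c_\alpha:=(c_{1\alpha},\dots,c_{m\alpha})\in\RM^m$, $\alpha=1,\dots,s$, and set $W:=\mathrm{span}\{c_1,\dots,c_s\}\subseteq\RM^m$. Since there are only $s$ columns and $s<m$, we have $\dim W\le s<m$, so $W$ is a proper subspace of $\RM^m$. Next I would produce the decomposition required by Lemma \ref{cdec}: choose a subspace $V_1$ with $W\subseteq V_1$ and $1\le\dim V_1<m$ (take $V_1=W$ when $W\ne0$, and $V_1$ any line otherwise; this is possible because $\dim W\le\max(1,s)=s<m$, using that the product has at least one factor so $s\ge1$), put $V_2:=V_1^\perp\ne0$, and take the trivial partition $A:=\{1,\dots,s\}$, $B:=\emptyset$. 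Then $c_\alpha\in W\subseteq V_1$ for every $\alpha\in A$, while the condition on the block $B$ is vacuous; as the proof of Lemma \ref{cdec} makes clear, a decomposition with one trivial block is allowed. Hence the hypotheses of Lemma \ref{cdec} hold with $V_1,V_2\ne0$, so the structure is degenerate, which is the contrapositive of the claim.

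Alternatively, and perhaps more conceptually, one can bypass Lemma \ref{cdec} and argue via Remark \ref{injective}: a non-degenerate parallel $\gg$-structure has injective $\psi$. In our situation $\ad(P)=N\times\RM^m$ is the trivial rank-$m$ bundle with parallel frame $\xi_1,\dots,\xi_m$ and $\psi(\xi_j)=-F_j=-\sum_{\alpha=1}^sc_{j\alpha}\Omega_\alpha$, so injectivity of $\psi$ forces the parallel $2$-forms $F_1,\dots,F_m$ to be linearly independent. But each $F_j$ lies in $\mathrm{span}\{\Omega_1,\dots,\Omega_s\}$, and the $\Omega_\alpha$ are linearly independent because $\Omega_\alpha$ is a nowhere-vanishing section of the subbundle $\Lambda^2\T N_\alpha\subseteq\Lambda^2\T N$ and these subbundles are mutually orthogonal; hence $m\le s$.

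There is essentially no analytic difficulty in this corollary. The only points requiring a little care are the bookkeeping of the degenerate edge cases — an empty block $B$, or $W=0$ — together with the observation that Lemma \ref{cdec} (equivalently Definition \ref{ndg}) does permit a decomposition in which one of the blocks is trivial, which is clear from the proof of Lemma \ref{cdec}; in the second approach the only subtlety is that parallelism makes fibrewise linear independence of the $F_j$ equivalent to their linear independence as global sections.
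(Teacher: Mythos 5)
Your first argument is essentially identical to the paper's own proof: the paper also notes that for $m>s$ the columns $(c_{1\alpha},\ldots,c_{m\alpha})$ span a proper subspace $V_1$ of $\RM^m$ (replacing $V_1$ by any proper subspace if it is zero), and applies Lemma \ref{cdec} with $V_2:=V_1^\perp$, $A=\{1,\ldots,s\}$, $B=\emptyset$; your handling of the edge cases matches this. Your alternative argument via Remark \ref{injective} is also correct and somewhat more direct: fibrewise injectivity of $\psi$ forces the $m$ forms $F_j$ to be pointwise linearly independent inside the $s$-dimensional span of the $\Omega_\alpha$, giving $m\le s$ without invoking Lemma \ref{cdec} at all; the trade-off is that it uses the general Remark \ref{injective} rather than the explicit matrix criterion the paper has just established, which is why the paper's route is the more economical one in context.
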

\begin{proof} Indeed, if $m>s$ then the $s$ vectors $(c_{1\alpha},\ldots,c_{m\alpha})$ span a strict subspace $V_1$ of $\RM^n$ (if $V_1=0$ we replace it by any proper subspace of $\RM^n$). Then $V_2:=V_1^\perp$, $A:=\{1,\ldots,s\}$ and $B:=\emptyset$ satisfy the condition of Lemma \ref{cdec}, so the structure is degenerate.
\end{proof}

\begin{exe}\label{e79}
If $(N,g^N)$ carries a parallel $\sp(1)$-structure, then by Lemma \ref{iredg} below, the structure is degenerate unless $(N,g^N)$ is de Rham irreducible, in which case it is quaternion-Kähler with positive scalar curvature by Proposition \ref{irre} below. 
Conversely, every quaternion-Kähler manifold with positive scalar curvature carries a parallel non-degenerate $\sp(1)$-struc\-ture defined by its Konishi bundle \cite{konishi} (see the appendix for the explicit calculations).
\end{exe}

\begin{exe}\label{ex1} Every symmetric space of compact type $N=L/G$ carries a natural parallel $\gg$-structure. Indeed, consider the natural metrics on $N$ and $\gg$ induced by an $\Ad_L$-invariant scalar product on the Lie algebra $\ll$ of $L$, and define $P:=L$, seen as $G$-principal bundle over $N$, with the connection $\gamma$ induced from the Levi-Civita connection of $N$. If $\mm$ denotes the orthogonal complement of $\gg$ in $\ll$, $\gamma$ is just the $\mm$-component of the Maurer-Cartan form of $L$.
Then the $G$-equivariant map $\phi:\gg\to\so(\mm)\simeq\Lambda^2\mm$, $a\mapsto \ad_a|_\mm$ induces a parallel Lie algebra bundle morphism
$\psi:\ad (P)\to\Lambda^2 \T N$ by $\psi(u\xi):=u\phi(\xi)$ for every $u\in P$ and $\xi\in\gg$. In order to check  \eqref{epsi}, let $u$ be a local section of $P$, $x,y\in \mm$ and $X:=ux$, $Y:=uy$ the corresponding local vector fields on $N$. Then for every $\xi\in\gg$ we have
\bea\langle u\xi,R^\gamma_{X,Y}\rangle&=&-\langle u\xi,u[x,y]\rangle=-\langle \xi,[x,y]\rangle=\langle [x,\xi],y\rangle=-\langle \ad_\xi( x),y\rangle\\
&=&-\langle \phi(\xi),x\wedge y\rangle=-g^N( u\phi(\xi),ux\wedge uy)=-g^N(\psi(u\xi),X\wedge Y)\ .\eea
More generally, according to Lemma \ref{g1}, a symmetric space of compact type  $N=L/H$ carries a canonical parallel $\gg$-structure for every ideal $\gg$ of the isotropy Lie algebra $\hh$. 
\end{exe}

Conversely, we have the following:
\begin{elem}\label{symg} Let $\gg$ be a semi-simple Lie algebra of compact type and let  $(P,\gamma,\la\cdot,\cdot\ra,\psi)$ be a parallel $\gg$-structure on a locally symmetric space $(N=L/H, g^N)$ of compact type with $\psi$ fiberwise injective. Then $\gg$ is an ideal of $\hh$ and the $\gg$-structure on $N$  obtained as in Example \ref{ex1} by reduction of the canonical parallel $\hh$-structure on $N$ to $\gg$.
\end{elem}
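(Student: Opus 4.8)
The plan is to work at a fixed point $o\in N$ through the holonomy representation of $g^N$, exploiting the two special features of a locally symmetric space: $R^N$ is parallel, and --- identifying the isotropy algebra $\hh$ of the pair $\ll=\hh\oplus\mm$ (with $\mm:=\T_oN$) with the holonomy algebra, hence with the subalgebra of $\so(\mm)$ spanned by the curvature endomorphisms $R^N_{x,y}$ --- the $R^N_{x,y}$ span $\hh$. First, since $\psi$ is $\nabla^N\otimes\nabla^\gamma$--parallel, $\psi(\ad(P))$ is a parallel subbundle of $\Lambda^2\T N=\so(\T N)$; it is fibrewise a Lie subalgebra, being the image of the Lie algebra bundle morphism $\psi$, and fibrewise isomorphic to $\gg$ by the injectivity hypothesis. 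On a locally symmetric space parallel subbundles are exactly those whose fibre at $o$ is $\Ad_H$--invariant, so $\mathfrak a:=\psi(\ad(P)_o)$ is an $\ad_\hh$--invariant Lie subalgebra of $\so(\mm)$ isomorphic to $\gg$, in particular semisimple of compact type.

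Next I would bring in the curvature condition. As $\psi$ is the metric adjoint of $-R^\gamma$, \eqref{epsi} gives $\ker\psi=(\mathrm{im}\,R^\gamma)^\perp$, so injectivity of $\psi$ forces $\mathrm{im}\,R^\gamma=\ad(P)$, i.e. the $R^\gamma_{x,y}$ span $\ad(P)$ fibrewise. Evaluating \eqref{e50} at $o$ yields $[\psi(R^\gamma_{x,y})-R^N_{x,y},\,a]=0$ for all $x,y\in\mm$ and $a\in\mathfrak a$, hence $R^N_{x,y}\in\mathfrak a+\mathfrak c$ with $\mathfrak c:=\z_{\so(\mm)}(\mathfrak a)$; moreover $\psi(R^\gamma_{x,y})-R^N_{x,y}\in\mathfrak c$, so $\pi_{\mathfrak a}R^N_{x,y}=\psi(R^\gamma_{x,y})$ for the projection $\pi_{\mathfrak a}$ onto $\mathfrak a$ along $\mathfrak c$. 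Since $\mathfrak a$ is semisimple we have $\mathfrak a\cap\mathfrak c=0$ and $\mathfrak a\perp\mathfrak c$ (the trace form pairs $[\mathfrak a,\mathfrak a]=\mathfrak a$ trivially against the commutant), so $\hh\subset\mathfrak a\oplus\mathfrak c$ and $\pi_{\mathfrak a}$ restricts to a Lie algebra morphism $\hh\to\mathfrak a$ whose image is $\mathrm{span}\{\pi_{\mathfrak a}R^N_{x,y}\}=\mathrm{span}\{\psi(R^\gamma_{x,y})\}=\psi(\mathrm{im}\,R^\gamma)=\mathfrak a$. Thus $\mathfrak a\cong\gg$ is a quotient of the reductive algebra $\hh$, hence isomorphic to an ideal $\hh_1$ of $\hh$ --- this already gives the first assertion, that $\gg$ is an ideal of $\hh$.

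The remaining, and to my mind delicate, point is to upgrade this abstract isomorphism to the identification of $(P,\gamma,\psi)$ with the structure of Example \ref{ex1} reduced to $\gg$ via Lemma \ref{g1}. For this one must show that $\mathfrak a$ is literally contained in $\hh$, i.e. coincides as a subbundle of $\so(\T N)$ with the $\hh_1$--summand of the holonomy bundle, and not with a copy of $\hh_1$ tilted by a Lie morphism into $\mathfrak c$. I would split $N$ into its de Rham factors --- condition \eqref{imp}, which lets one restrict the $\gg$--structure to the factors, is automatic here, cf. the discussion after Lemma \ref{pg} --- decompose $\mathfrak a$ into its $\ad_\hh$--invariant simple ideals, and reduce to the case of an irreducible symmetric factor, where one invokes the classification of isotropy representations of irreducible symmetric spaces to conclude that an $\ad_\hh$--invariant subalgebra subject to $\hh\subset\mathfrak a\oplus\z_{\so(\mm)}(\mathfrak a)$ necessarily lies inside the isotropy algebra. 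Granting $\mathfrak a\subset\hh$, the parallel morphism $\psi$ identifies $(\ad(P),\nabla^\gamma)$ with the $\hh_1$--subbundle of the holonomy bundle equipped with $\nabla^N$, which is precisely the adjoint bundle with connection of the reduction to the ideal $\hh_1$ of the canonical parallel $\hh$--structure of $N$ built in Example \ref{ex1}; since $\gg$ is semisimple, on a small simply connected neighbourhood a principal $G$--bundle with connection is recovered from its adjoint bundle with connection, so $(P,\gamma)$ is isomorphic to that reduced bundle and, by the construction of $\psi_1$ in Example \ref{ex1}, the isomorphism carries $\psi$ to $\psi_1$. This last step is where I expect the real work to be, being the only place where ``symmetric'' is used beyond ``curvature parallel''.
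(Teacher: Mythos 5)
Your opening steps are fine: $\mathfrak{a}:=\psi(\ad(P))_o$ is an $\ad_\hh$-invariant semisimple subalgebra of $\so(\T_oN)$, injectivity of $\psi=-(R^\gamma)^*$ makes $R^\gamma$ fibrewise surjective, and \eqref{e50} gives $\hh\subset\mathfrak{a}\oplus\z_{\so(\mm)}(\mathfrak{a})$ with $\pi_{\mathfrak{a}}(\hh)=\mathfrak{a}$, hence an abstract isomorphism of $\gg$ with an ideal of $\hh$. But the heart of the lemma is the literal containment $\psi(\ad(P))\subset\hh N$ inside $\Lambda^2\T N$ (ruling out a copy of $\gg$ tilted into the centralizer), without which the identification with the reduction of the canonical $\hh$-structure of Example \ref{ex1} cannot be made; and this is exactly the point you defer to an unexecuted appeal to ``the classification of isotropy representations of irreducible symmetric spaces''. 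As it stands this is a genuine gap: the constraints you extract come only from the bracket action of $R^N$ via \eqref{e50}, and they do not by themselves exclude that $\hh$ sits in $\mathfrak{a}\oplus\z_{\so(\mm)}(\mathfrak{a})$ as the graph of a nonzero morphism (the centralizer $\z_{\so(\mm)}(\mathfrak{a})$ need not be abelian, so semisimplicity of $\mathfrak{a}$ alone does not kill it), nor is the reduction to irreducible de Rham factors and the ensuing case analysis formulated or verified. Note also that the abstract ``isomorphic to an ideal'' conclusion of your second step is strictly weaker than the assertion of the lemma.

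The paper closes precisely this gap, directly and with no classification, using an ingredient your argument never exploits: the pair symmetry of $R^N$. Writing $\Lambda^2\T N=\hh N\oplus\hh^\perp N$, the curvature operator $R^N$ takes values in $\hh N$ and is symmetric, hence vanishes on $\hh^\perp N$; applying \eqref{rgrn} to $\omega\in\hh^\perp N$ and using the fibrewise injectivity of $\psi$ together with the fact that $\gg$ has trivial center gives $R^\gamma(\omega)=0$ for all such $\omega$, and then \eqref{epsi} immediately yields $\psi(\ad(P))\perp\hh^\perp N$, i.e. $\psi(\ad(P))\subset\hh N$. Since $R^N$ is an isomorphism of $\hh N$, \eqref{rgrn} then shows that $[\hh N,\psi(\ad(P))]\subset\psi(\ad(P))$, so the image is an honest ideal of $\hh$, and the identification with the reduced canonical structure follows as in your final paragraph. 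To make your proposal complete you would either have to import this curvature argument or actually prove the representation-theoretic claim you invoke; the latter is exactly the ``real work'' you acknowledge omitting.
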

\begin{proof} Let us consider as usual the curvature tensors $R^\gamma$ of $(P,\gamma)$ and $R^N$ of $(N,g^N)$ as bundle morphisms $R^\gamma:\Lambda^2\T N\to\ad(P)$ and $R^N:\Lambda^2\T N\to \Lambda^2\T N$ by 
$$R^\gamma(\omega):=\tfrac12\sum_{i,j}\omega(e_i,e_j)R^\gamma_{e_i,e_j},\qquad R^N(\omega)(X,Y):=\tfrac12\sum_{i,j}\omega(e_i,e_j)g^N(R^N_{e_i,e_j}X,Y)\ .$$
Since $N=L/H$ is of compact type, the metric on $N$ is defined by a bi-invariant scalar product on the Lie algebra $\ll$ of $L$. Let $\hh$ denote the Lie algebra of $H$ and let $\mm$ be its orthogonal complement in $\ll$. The isotropy representation of $\hh$ on $\mm$ defines an embedding of $\hh$ in $\Lambda^2\mm$, and we denote by $\hh^\perp$ its orthogonal complement, so that $\Lambda^2 \mm=\hh\oplus\hh^\perp$. Correspondingly, the bundle $\Lambda^2 \T N$ decomposes in an orthogonal direct sum $\Lambda^2 \T N=\hh N\oplus \hh^\perp N$. As an endomorphism of $\Lambda^2\T N$, the Riemannian curvature tensor $R^N$ takes values in $\hh N$, so by pair symmetry $R^N$ vanishes on $\hh^\perp N$.

We now use \eqref{rgrn}, which in the present context reads 
\be\label{pr}\psi(R^\gamma(\omega)s)=[R^N(\omega),\psi(s)]\ee
for every $s\in\ad(P)$ and $\omega\in\Lambda^2\T N$. Applying this to some $\omega\in \hh^\perp N$ and using the vanishing of $R^N$ on $\hh^\perp N$, together with the injectivity of $\psi$ yields $R^\gamma(\omega)s=0$ for every $\omega\in \hh^\perp N$ and $s\in\ad(P)$. Moreover, $R^\gamma(\omega)s=[R^\gamma(\omega),s]$ and since $\gg$ is semi-simple, this shows that $R^\gamma(\omega)=0$ for every $\omega\in \hh^\perp N$. From \eqref{epsi} we thus get that $\psi(\ad(P))$ is orthogonal to $\hh^\perp N$, i.e. $\psi(\ad(P))\subset \hh N$. Since $\psi$ is a Lie algebra bundle morphism, this shows that $\gg$ is identified with a Lie sub-algebra of $\hh$. Moreover, it is well known that $R^N$ is an isomorphism of $\hh N$, so \eqref{pr} shows that $\gg$ is actually an ideal of $\hh$. \end{proof}

\section{Classification of non-degenerate parallel $\gg$-structures}\label{se8}

The aim of this section is the following classification result:

\begin{ath}\label{pgs}
Let $\gg$ be a Lie algebra of compact type and let $(g^N,P,\gg,\gamma,\la\cdot,\cdot\ra,\psi)$ be a non-degenerate parallel $\gg$-structure on a manifold $N$. Then either 
\begin{itemize}
\item $N$ is quaternion-Kähler with positive scalar curvature, $\gg=\sp(1)$ and $P$ is the Konishi bundle like in Example \ref{e79}, or 
\item $N=L/H$ is an irreducible locally symmetric space of compact type, $\gg$ is isomorphic to a semi-simple factor of $\hh$ and the parallel $\gg$-structure is the one described in Example \ref{ex1}, or
\item $N$ is locally a Riemannian product $N=N_1\times \ldots\times N_p\times S_1\times\ldots\times S_q$ with $N_\alpha$ Kähler for $\alpha\in\{1,\ldots,p\}$, $S_\beta=L_\beta/\U(1)H_\beta$ Hermitian symmetric of compact type for $\beta\in\{1,\ldots,q\}$, $\gg=\u(1)^m\oplus \k\oplus\ldots\oplus\k_q$ and $\k_\beta$ a non-zero factor of $\hh_\beta$. The parallel $\gg$-structure on $N$ is the Whitney product of a parallel $\u(1)^m$-structure on $N$ like in Example \ref{e78} (ii) and a parallel $\k\oplus\ldots\oplus\k_q$-structure on $N$ which is the Riemannian product of the canonical parallel $\k_\beta$-structures on $S_\beta$ (Example \ref{ex1}) and the $0$-structures on the factors $N_\alpha$.
\end{itemize}
\end{ath}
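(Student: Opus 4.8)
The plan is to analyse the structure through the decomposition of $\gg$ into centre and simple ideals and through the local de Rham decomposition of $(N,g^N)$. Write $\gg=\z\oplus\gg_1\oplus\cdots\oplus\gg_r$ with $\z\cong\u(1)^m$ the centre and $\gg_j$ the simple ideals. Since $[\z,\gg_j]=0$ and $[\gg_i,\gg_j]=0$ for $i\neq j$, and $\psi$ is a Lie algebra bundle morphism, the $\psi$-images of $\z$ and of the $\gg_j$ pairwise commute, so by Lemma \ref{g1} and Lemma \ref{wp} the given structure is the Whitney product of a parallel $\z$-structure and parallel $\gg_j$-structures; it therefore suffices to understand the central part and each simple part and then to control how they can be glued subject to non-degeneracy (Definition \ref{ndg}). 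Next write $N=N_0\times N_1\times\cdots\times N_s$ locally, with $N_0$ flat and $N_1,\dots,N_s$ irreducible and non-flat. A first essential step is the de Rham alignment $\psi(\ad(P))\subset\bigoplus_{a=0}^s\Lambda^2\T N_a$ (this is where the alignment Lemmas \ref{le1} and \ref{le2} enter), after which Lemma \ref{pg} lets us restrict the structure to each factor.

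The crucial reduction is that every simple ideal is supported on a single irreducible non-flat factor. Since $\psi$ is injective (Remark \ref{injective}) and a Lie algebra bundle morphism, and $\so(\T N)=\bigoplus_a\so(\T N_a)$ as a bundle of Lie algebras, the projection of the simple bundle $\psi(\ad(P)|_{\gg_j})$ to each $\so(\T N_a)$ is either zero or fibrewise injective. Suppose the projections $\omega_b,\omega_c$ to $N_b$ and $N_c$ are both injective. By \eqref{epsi}, for $X,Y\in\T N_b$ the $\gg_j$-component of $R^\gamma_{X,Y}$ is determined by $\omega_b$ alone; substituting this into the curvature identity \eqref{rgrn} for such $X,Y$, using that $R^N_{X,Y}\in\Lambda^2\T N_b$ commutes with $\Lambda^2\T N_c$, and projecting the resulting identity onto $\Lambda^2\T N_c$, forces $\omega_c$ of that component to centralise the simple algebra $\omega_c(\gg_j)$, hence to vanish; injectivity of $\omega_c$ then kills the $\gg_j$-component of $R^\gamma_{X,Y}$ for all $X,Y\in\T N_b$, and \eqref{epsi} gives $\omega_b\equiv 0$, a contradiction. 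The same argument with $R^N\equiv0$ on $N_0$ shows no simple ideal is supported on the flat factor, while any irreducible factor untouched by $\psi(\ad(P))$ would violate non-degeneracy through Lemma \ref{pg}. Hence on each $N_a$ ($a\geq1$) one obtains an induced parallel $\gg_{(a)}$-structure with $\gg_{(a)}=\z_{(a)}\oplus\bigoplus_{j:\,a(j)=a}\gg_j$, and it remains to classify parallel $\gg$-structures on an irreducible base and then reassemble.

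On irreducible $N$, $\psi(\ad(P))$ is a holonomy-invariant subalgebra of $\so(\T_xN)$, so Berger's list applies. The central part consists of commuting parallel $2$-forms and yields, by simultaneous diagonalisation and a further use of de Rham, the Kähler (resp. Hermitian symmetric, once a compatible semisimple part is present) situation of Example \ref{e78}. For a non-zero semisimple part one runs through Berger's list: in the Ricci-flat Kähler, $\G_2$ and $\Spin(7)$ cases the centraliser-in-$\so(n)$ computation coming from \eqref{e50}, combined with Ricci-flatness, forces $N$ to be flat and is excluded; the surviving possibilities are $\gg=\sp(1)\subset\sp(m)\oplus\sp(1)$ with $N$ quaternion-Kähler — where the structure is the Konishi bundle and \eqref{epsi} forces positive scalar curvature (Example \ref{e79}, Lemmas \ref{iredg} and \ref{irre}), giving the first alternative — and the cases where $N$ is locally symmetric, where Lemma \ref{symg} identifies the semisimple part with an ideal of the isotropy algebra and the structure with that of Example \ref{ex1} (reducing from the full isotropy via Lemma \ref{g1}). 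Finally one reassembles: a factor carrying a non-zero semisimple part is quaternion-Kähler or locally symmetric, and in the reducible case non-degeneracy forbids a quaternion-Kähler (or non-Kähler symmetric) factor from occurring alongside any other factor, since the only way to glue factors without producing a product decomposition is through a $\u(1)$, which requires each glued factor to be Kähler; thus all such factors are Hermitian symmetric, the $\u(1)^m$-part is a diagonal parallel $\u(1)^m$-structure over the Kähler factors, and one lands in the third alternative. I expect the main obstacle to be the Berger-list bookkeeping — ruling out the non-symmetric special holonomies and correctly assigning the $\sp(m)$-type (and larger) structures on Wolf spaces to the locally symmetric alternative — together with establishing the de Rham-alignment lemmas on which the whole localisation step rests.
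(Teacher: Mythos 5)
Your global strategy coincides with the paper's: the de Rham alignment $\psi(\ad(P))\subset\oplus_\alpha\Lambda^2\T N_\alpha$ is Lemmas \ref{le1} and \ref{le2}, your commutator argument showing that each simple ideal is supported on a single irreducible factor is a correct variant of Lemma \ref{iredg} (the paper uses the vanishing of $R^N_{X,Y}Z$ for mixed arguments directly, you use the centralizer of the simple image, both via \eqref{rgrn}), and your reassembly via non-degeneracy and Whitney products is the content of Lemma \ref{part} and the final Case 1/Case 2 discussion. Up to that point the proposal is sound and essentially the paper's proof.

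The genuine gap is the classification over an irreducible base, i.e.\ the content of Proposition \ref{irre}, which you assert rather than prove. First, your exclusion of the Ricci-flat holonomies and your claim that ``\eqref{epsi} forces positive scalar curvature'' in the quaternion-K\"ahler case both rest on the statement that a non-abelian image of $\psi$ forces $N$ to be Einstein with \emph{positive} scalar curvature; this is a nontrivial computation (Schur's lemma giving $\psi\circ\psi^*=\sum_a\lambda_a\pi_a$ with $\lambda_a>0$, hence $R^N=R^\perp-\sum_a\lambda_a\pi_a$ with $R^\perp$ in the centralizer of $\mathrm{Im}(\psi)$, then a Casimir identity and the first Bianchi identity yielding $\Ric^N=r\,\id$, $r>0$), and nothing in your sketch supplies it — the vague appeal to ``the centraliser-in-$\so(n)$ computation coming from \eqref{e50}, combined with Ricci-flatness'' is not an argument, since for $\SU(m)$-holonomy the centralizer is not zero and the semi-definiteness of $\psi\circ R^\gamma$ by itself does not close the case. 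Second, within Berger's list you must show that holonomy $\U(m)$ (and $\SO(n)$, $n\neq4$) cannot carry a non-zero semisimple part unless $N$ is locally symmetric; the paper does this by showing $(\nabla^N_XR^N)_{Y,Z}$ is forced to be proportional to $J$ and then killing it with the second Bianchi identity, a step absent from your proposal. Third, in the $\Sp(q)\Sp(1)$-holonomy case you must rule out the possibility that the image of $\psi$ is the $\sp(q)$-summand rather than $\sp(1)$ (otherwise the first alternative of the theorem would be wrong); the paper does this via the first Bianchi identity and the linear independence of the $4$-forms $\omega_a\wedge\omega_a$ for $q\ge2$ — you explicitly flag this point as an unresolved obstacle, so it remains open in your argument. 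Until these three points are supplied, the trichotomy of the theorem is not established.
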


\begin{proof} 
Let $(N,g^N)=N_0\times N_1\times \ldots\times N_s$ be the local de Rham decomposition of $N$, with $N_0$ flat and $N_i$ irreducible for $i\ge 1$. 
We decompose the Lie algebra $\gg$ as $\gg=\z\oplus \gg_1\oplus\ldots\oplus\gg_l$, where $\z$ denotes its center and $\gg_i$ are simple Lie algebras of compact type. Since the scalar product is $\ad_\gg$-invariant, this decomposition can be chosen to be orthogonal. We define the corresponding Lie algebra bundles $\z P:=P\times _\ad \z$ (which is actually trivial) and $\gg_i P:= P\times_\ad\gg_i$, so that $\ad (P)=\z P\oplus \gg_1 P\oplus\ldots\oplus\gg_l P$.
Recall that by Remark \ref{injective} the map $\psi$ is injective since the parallel $\gg$-structure is assumed to be
non-degenerate.

\begin{elem}\label{le1} If the Lie algebra bundle morphism $\psi$ is injective, then 
$$\psi(\gg_i P)\subset \bigoplus_{\alpha\ge 1} \Lambda^2 \T N_\alpha,\qquad\forall\ i\in\{1,\ldots,l\}\ .$$
\end{elem}
\begin{proof} Let $\alpha,\beta\in\{0,\ldots,s\}$ be either different or both equal to $0$, and let $X_\alpha$, $X_\beta$ be tangent vectors to $N_\alpha$ and $N_\beta$ respectively. From the symmetries of the Riemannian curvature tensor we obtain $R^N_{X_\alpha,X_\beta}=0$. Using \eqref{rgrn} we get for every $\xi\in\gg$
$$0=R^N_{X_\alpha,X_\beta}(\psi(u\xi))=\psi(R^\gamma_{X_\alpha,X_\beta}(u\xi)),$$
whence $R^\gamma_{X_\alpha,X_\beta}(u\xi)=0$ by the injectivity assumption. On the other hand 
$$0=R^\gamma_{X_\alpha,X_\beta}(u\xi)=[R^\gamma_{X_\alpha,X_\beta},u\xi]$$ 
as local sections of $\ad (P)$, so $R^\gamma_{X_\alpha,X_\beta}$ is a section of $\z P$. This shows that
 for every $i\ge 1$ and $\xi\in\gg_i$ we have
 $$0=\la R^\gamma_{X_\alpha,X_\beta},u\xi\ra=-g^N(\psi(u\xi),X_\alpha\wedge X_\beta),$$
 so finally $\psi(u\xi)$ is orthogonal to the sub-bundles $\Lambda^2 \T N_0$ and to $\T N_\alpha\wedge \T N_\beta$ of $\Lambda^2\T N$ for all $\alpha\neq \beta$.
\end{proof}

\begin{elem}\label{le2}
$\psi(\z P) \subset \oplus_{\alpha \ge 0} \Lambda^2 \T N_\alpha$
\end{elem}
\begin{proof}
Let $\xi_1,\ldots,\xi_m$ be an orthonormal basis of the center $\z$, inducing 
a global orthonormal parallel basis $\hat \xi_1, \ldots, \hat  \xi_m$ of $\z P$. Let $\psi (\hat \xi_i) =: F_i$ be the corresponding
parallel skew-symmetric endomorphisms of $\T N$. 

The de Rham theorem shows that the restricted holonomy group of $N$ is isomorphic to a product $K_0\times \ldots\times K_s$, and $\T N$ is associated to a representation of this group on a direct sum $\hh_0\oplus\ldots\oplus \hh_s$, (with $\hh_\alpha$ corresponding to $\T N_\alpha$), such that $K_0=0$ and for every $\alpha\ge 1$, $K_\alpha$ acts irreducibly on $\hh_\alpha$ and trivially on $\hh_\beta$ for $\beta\ne \alpha$. Each parallel endomorphism $F_i$ corresponds to an equivariant map of this representation. On the other hand, every equivariant map clearly preserves each summand $\hh_\alpha$, thus showing that $F_i(\T N_\alpha) \subset \T N_\alpha$ for all $\alpha \ge 0$.
\end{proof}

We now define $I=\{1,\ldots,l+m\}$ and denote by $\gg_{l+i}$ the sub-algebra generated by $\xi_i$ for $1\le i\le m$.
Let $\gg = \oplus_{i \in I}\gg_i$ be the decomposition of $\gg$ with corresponding $\nabla^\gamma$-parallel 
decomposition of $\ad(P) = \oplus_{i\in I} \gg_i P$ of the adjoint bundle. 
By Lemmas \ref{le1} and \ref{le2}, the parallel bundle morphism
$\psi$ maps $\ad (P)$ to $\oplus_{\alpha \in A} \Lambda^2 \T N_\alpha$, where $A=\{0,\ldots,s\}$. Let $\pi_\alpha$ be the projection of $\Lambda^2\T N$ onto the sub-bundle $\Lambda^2\T N_\alpha$. We use the notation 
\be\label{eai}E_{\alpha i} := \left. \pi_\alpha (\psi(\gg_i P))\right|_{N_\alpha}
\ee 
for the corresponding parallel sub-bundle of 
$\Lambda^2 \T N_\alpha\to N_\alpha$. In other words, $E_{\alpha i}$ is the parallel sub-bundle of $\Lambda^2\T N_\alpha$ corresponding to parallel $\gg_i$-structure on $N_\alpha$ obtained by reducing the initial $\gg$-structure to $\gg_i$ (Lemma \ref{g1}) and then restricting to $N_\alpha$ (Lemma \ref{pg}).

The next result can be seen as a generalization to parallel $\gg$-structures of the well known fact that quaternion-Kähler manifolds are irreducible.

\begin{elem}\label{iredg} If $E_{\alpha i}\ne 0$ for some $i\in \{1,\ldots,l\}$, then $E_{\beta i}=0$ for every $\beta\in \{0,\ldots,s\}\setminus\alpha$.
\end{elem}
\begin{proof} The first Bianchi identity and the Riemannian curvature identities show that for every $\alpha\ne \beta$ and tangent vectors $X,Y\in \T N_\alpha$ and $Z\in \T N_\beta$ one has 
\beq\label{rv} R^N_{X,Y}Z=0.\eeq

Let $x$ be any point of $N$ and $u$ an element in the fibre of $P$ over $x$. The hypothesis gives the existence of two vectors $X,Y\in \T_xN_\alpha$ and some $\xi\in\gg_i$ such that $g^N(\psi(u\xi),X\wedge Y)\ne 0$. By \eqref{epsi} we thus get $\la u\xi,R^\gamma_{X,Y}\ra\ne 0$. Since $\gg_i$ is a simple Lie algebra, it has no center, so there exists some $\zeta\in \gg_i$ with $[R^\gamma_{X,Y},u\zeta]\ne 0$. For every $\beta\in \A$, the map 
$$\pi_\beta\circ\psi:\ad (P)\to \Lambda^2\T N_\beta$$
is a parallel Lie algebra bundle morphism, so using again the fact that $\gg_i$ is simple, $\pi_\beta\circ\psi$ either vanishes identically, or is injective. Assume for a contradiction that $E_{\beta i}\ne 0$ for some $\beta\in \{0,\ldots,s\}\setminus\alpha$. Then from \eqref{rgrn} we obtain
$$0\ne \pi_\beta\circ\psi([R^\gamma_{X,Y},u\zeta])=\pi_\beta([\psi(R^\gamma_{X,Y},\psi(u\zeta)])=\pi_\beta([R^N_{X,Y},\psi(u\zeta)]).$$
This shows that there exists $Z\in \T_x N_\beta$ such that 
$$[R^N_{X,Y},\psi(u\zeta)](Z)\ne 0.$$
Denoting by $F:=\psi(u\zeta)$, this reads $R^N_{X,Y}FZ\ne FR^N_{X,Y}Z$. On the other hand, $FZ\in \T_x N_\beta$ by Lemma \ref{le1}, so both $R^N_{X,Y}FZ$ and $FR^N_{X,Y}Z$ vanish from \eqref{rv}. This contradiction concludes the proof.
\end{proof}

\begin{elem}\label{part}
If there are partitions $A = A_1 \sqcup A_2$ and $I = I_1 \sqcup I_2$  of the two index sets
$I$ and $A$ such that $E_{\alpha i} =0$ for all $\alpha \in A_2, i \in I_1$ and for all $\alpha \in A_1, i \in I_2$, then either $A_1=I_1=\emptyset$ or $A_2=I_2=\emptyset$.
\end{elem}
\begin{proof} The argument is similar to the one used in the proof of Proposition \ref{p75}. Consider such partitions  $A = A_1 \sqcup A_2$ and $I = I_1 \sqcup I_2$. For $i=1, 2$ we define the distributions $T_i$ on $M:=P$ spanned by the horizontal lifts of vectors in $\bigoplus_{\alpha\in A_i}\T N_\alpha$ and by fundamental vector fields $\xi^*$ with $\xi\in \bigoplus_{j\in I_i}\gg_j$. We claim that $\tau\in\Lambda^3 T_1\oplus \Lambda^3 T_2$.

It is enough to show that $\tau(U,V,W)=0$ whenever two of the vectors $U,V,W$ belong to $T_1$ and one to $T_2$, and by multi-linearity, one can assume each of them is  either a horizontal lift or a vertical fundamental vector field. Using Lemmas \ref{tors} and \ref{l73}, we are left with two cases:

a) $U,V,W$ are all vertical, and $U=\xi^*,V=\zeta^*\in T_1$, and $W=\eta^*\in T_2$. Then by \eqref{tau} we have 
$$\tau(U,V,W)=-\tfrac12\la[\xi,\zeta],\eta\ra=0,$$
since $\xi$ and $\zeta$ belong to the sub-algebra $\bigoplus_{j\in I_1}\gg_j$ of $\gg$, which is orthogonal to $\bigoplus_{j\in I_2}\gg_j$, which contains $\eta$.

b) $U=\tilde X$ and $V=\tilde Y$ are horizontal lifts and $W=\xi^*$, with $X\in \T N_\alpha$, $Y\in\T N_\beta$, $\xi\in \gg_i$, and either $\alpha,\beta\in A_1$, $i\in I_2$, or $\alpha\in A_1$, $\beta\in A_2$, $i\in I_2$. In both cases we have by \eqref{tau} 
$$\tau(U,V,W)=\tfrac12\la\Omega^\gamma(\tilde X,\tilde Y),\xi\ra=\tfrac12\la u^{-1}R^\gamma_{X,Y},\xi\ra=-\tfrac12 g^N(\psi(u\xi),X\wedge Y).$$
If $\alpha=\beta\in A_1$, $i\in I_2$, this expression vanishes by the assumption that $E_{\alpha i} := \left. \pi_\alpha (\psi(\gg_i P))\right|_{N_\alpha}$ vanishes.
If $\alpha\ne \beta$, this expression vanishes by Lemmas \ref{le1} and \ref{le2}.

If $T_1$ and $T_2$ are non-vanishing, the decomposition $\T M=T_1\oplus T_2$ satisfies the decomposability conditions in Definition \ref{ri}. On the other hand, $(M,g,\tau)$ is assumed to be indecomposable by Definition \ref{sgwt}, we necessarily have $T_1=0$ or $T_2=0$, thus proving that  either $A_1=I_1=\emptyset$ or $A_2=I_2=\emptyset$.
\end{proof}

We will now restrict our attention to the case where $N$ is irreducible.

\begin{epr}\label{irre}
Let $(N,g^N)$ be an irreducible Riemannian manifold with a parallel $\gg$-structure such that the morphism $\psi$ is not identically zero. Then one of the following three cases may occur:
\begin{itemize}
\item The Lie algebra sub-bundle $\mathrm{Im}(\psi)\simeq \ad (P)/\Ker (\psi)$ of $\Lambda^2 \T N$ is a line bundle, and $N$ is Kähler;
\item Each fiber of $\mathrm{Im}(\psi)$ is isomorphic to $\sp(1)$ and $N$ is quaternion-Kähler with positive scalar curvature;
\item $N$ is locally symmetric of compact type.
\end{itemize}
\end{epr}
\begin{proof} The sub-bundle $VN:=\mathrm{Im}(\psi)$ of $\Lambda^2 \T N\simeq \End^-(\T N)$ is a parallel sub-bundle, closed under the usual bracket of endomorphisms. It corresponds to an invariant subspace $V$ of $\Lambda^2 T$, where $T$ denotes the holonomy representation of $N$.

Consider the Riemannian curvature tensor $R^N$ of $N$, also viewed as an endomorphism $R^N:\Lambda^2 \T N\to \Lambda^2 \T N$ by 
\beq\label{nms}g^N(R^N(X\wedge Y),Z\wedge W)=g^N(R^N_{X,Y}Z,W)\ .\eeq

Using the relation between the curvatures of $\ad(P)$ and $N$ obtained in \eqref{e50}, we see that the endomorphism $R^\perp$ of $\Lambda^2\T N$ defined by
\beq\label{rper}R^\perp:=R^N-\psi\circ R^\gamma\eeq 
takes values in the centralizer of $VN$.

Let us decompose $V$ in an orthogonal direct sum of irreducible components $V=V_1\oplus\ldots\oplus V_k$ and correspondingly $VN=\oplus_a V_aN$. 

Schur's lemma shows that there exist positive real numbers $\l_a$ such that $\psi\circ\psi^*=\sum_a\lambda_a \pi_a$, where $\pi_a$ denotes the orthogonal projection from $\Lambda^2\T N$ to $V_a N$.  On the other hand, \eqref{epsi} shows that $\psi\circ R^\gamma=-\psi\circ\psi^*$, so by \eqref{rper} we obtain 
\beq\label{rrr}R^N=R^\perp-\sum_a\l_a \pi_a\ ,\eeq
where we recall that $R^\perp$ takes values in the centralizer of $VN$. We introduce the symmetric endomorphisms of $\T N$
$$S_a(X):=\sum_ie_i\lrcorner \pi_a(e_i\wedge X), \qquad S^\perp(X):=\sum_ie_i\lrcorner R^\perp(e_i\wedge X),$$
for every local orthonormal basis $\{e_i\}$ of $\T N$. 

We fix some $a\in\{1,\ldots,k\}$ and consider any orthonormal basis $\{A_s\}$ of $V_a N$ (with respect to the natural scalar product on 2-forms induced by $g^N$). The endomorphism $\sum_s A_s^2$ of $\T N$ is clearly parallel, so by the irreducibility of $N$, there exists some positive constant $b_a$ such that $\sum_s A_s^2=-b_a\Id$. We thus have for every $X\in \T N$:
\beq\label{sa}S_a(X)=\sum_{i,s}g^N(e_i\wedge X,A_s)A_s(e_i)=-\sum_sA_s^2(X)=b_a X.\eeq

 Moreover, since $V_a$ is a simple Lie algebra, the Casimir element of its adjoint representation is a multiple of the identity.
Consequently, there exists a positive constant $c_a$ such that $\sum_s[A_s,[A_s,A]]=-c_a A$ for every section $A$ of  $V_a N$. Consequently we have:
\beq\label{cs} -c_a A=\sum_s[A_s,[A_s,A]]=\sum_sA_s^2A+A\sum_s A_s^2-2\sum_s A_s A A_s,
\eeq
whence
\beq\label{rica}\sum_s A_s A A_s=(\tfrac12{c_a}-b_a) A,\qquad\forall A\in V_a N.\eeq

Let $A$ be any section of $V_a N$. Using \eqref{rrr}, \eqref{sa}, the first Bianchi identity for $R^N$ and the fact that $A$ commutes with the images of $R^\perp$ and of $\pi_b$ for every $b\ne a$, we obtain for every tangent vector $X$:
\bea R^\perp(A)(X)&=&R^N(A)(X)+\lambda_a A(X)=\tfrac12\sum_i R^N_{e_i,Ae_i}X+\lambda_a A(X)\\
&=&-\tfrac12\sum_i R^N_{X,e_i}Ae_i-\tfrac12\sum_i R^N_{Ae_i,X}e_i+\lambda_a A(X)=\sum_i R^N_{e_i,X}Ae_i+\lambda_a A(X)\\
&=&\sum_i R^\perp(e_i\wedge X)(Ae_i)-\sum_{i,b}\l_b\pi_b(e_i\wedge X)(Ae_i)+\lambda_a A(X)\\
&=&AS^\perp(X)-\sum_{b\ne a}\l_b b_b A(X)-\l_a\sum_{i}\pi_a(e_i\wedge X)(Ae_i)+\lambda_a A(X).\eea
On the other hand, we have 
\bea\sum_{i}\pi_a(e_i\wedge X)(Ae_i)&=&\sum_{i,s}g^N(A_s,e_i\wedge X)A_sAe_i=-\sum_{i,s}g^N(A_s(X),e_i)A_sAe_i\\&=&-\sum_sA_sAA_s(X),\eea
so using \eqref{rica} we obtain 
\beq\label{rp}R^\perp(A)=AS^\perp+d_a A,\qquad\forall A\in V_a N,
\eeq
where
$$d_a:=-\sum_{b\ne a}\l_b b_b+\l_a(\tfrac12{c_a}-b_a)+\l_a=-\sum_{b}\l_b b_b+\l_a(1+\tfrac12{c_a}).$$

Since $A$ and $R^\perp(A)$ are skew-symmetric and $S^\perp$ is symmetric, \eqref{rp} shows that $S^\perp$ commutes with $A$ for every $A\in VN$. On the other hand, $R^\perp(A)$ commutes with every $B\in VN$, so using \eqref{rp} again, we obtain
\beq\label{ricv}(S^\perp+d_a\,\id)\circ [A,B]=0\eeq
for every $A,B\in VN$. Let us denote by $DN$ the parallel sub-bundle of $\T N$ spanned by the images of all endomorphisms of the form $[A,B]$ with  $A,B\in VN$. 

Then the irreducibility of $N$ implies that either $DN=0$ or $DN=\T N$.

{\bf Case 1:} $DN=0$. In this  case $VN$ is an Abelian Lie algebra sub-bundle of $\Lambda^2 \T N$. Thus $\psi$ vanishes on all sub-bundles $\gg_iP$ for $i\in\{1,\ldots,l\}$, so $VN$ is spanned by the parallel commuting endomorphisms $\psi(\hat\xi_1),\ldots,\psi(\hat\xi_m)$ defined in the proof of Lemma \ref{le2}. Using again the irreducibility of $N$ we obtain that $V$ has dimension 1, hence $VN$ is generated by a parallel endomorphism whose square is proportional to the identity, and thus $N$ is Kähler.

{\bf Case 2:} $DN=\T N$.
In this case, $S^\perp+d_a\,\id=0$ by \eqref{ricv}. In particular $d_a$ is independent of $a$, whence $\lambda_a(1+\frac{c_a}2)=:r>0$ for every $a$. By \eqref{rrr} we obtain
$$\Ric^N=\sum_a \l_a S_a-S^\perp=(\sum_a \l_a b_a)\id-(\sum_b\l_b b_b-r)\id=r\,\id,$$ 
and thus $N$ is Einstein and has positive scalar curvature.  If $N$ is locally symmetric, we are in the last case of the proposition. If $N$ is not locally symmetric, we examine the possible holonomy representations of $N$ given by the Berger-Simons holonomy theorem:

1. If $\Hol_0(N)$ is one of $\G_2\subset \SO(7)$, $\Spin(7)\subset \SO(8)$, $\SU(m)\subset\SO(2m)$ or $\Sp(q)\subset\SO(4q)$, then $N$ is Ricci-flat, so it cannot have positive scalar curvature.

2. If $\Hol_0(N)=\SO(n)$ and $T=\RM^n$, then $\Lambda^2 T$ is irreducible and has no center, unless $n=4$, when $\Lambda^2 T=\Lambda^+ T\oplus \Lambda^- T$. Up to a change of orientation for $N$ one can assume that $VN=\Lambda^+\T N$. Let us denote by $R^+$ the orthogonal projection from $\Lambda^2 \T N$ onto $\Lambda^+\T N$. 
From \eqref{rrr}, the curvature endomorphism of $N$ can be written $R^N= R^\perp-\l R^+$, where $\l>0$ and $R^\perp$ takes values in the centralizer of $VN$, which in the present situation is $\Lambda^- \T N$. 
Using the well known decomposition of the curvature operator in dimension 4 as
\be\label{r}R^N=-\begin{pmatrix} \frac{\mathrm{Scal}}{12}\Id+W^+& \frac12 \widetilde{\Ric^N_0}\\ & \\ \frac12 \widetilde{\Ric^N_0}& \frac{\mathrm{Scal}}{12}\Id+W^-\end{pmatrix},\ee
(where $\widetilde{\Ric^N_0}$ denotes the Kulkarni-Nomizu product of $g^N$ with the trace-less Ricci tensor of $N$) we thus obtain that $\mathrm{Scal}=12\lambda>0$, $W^+=0$ and $\Ric^N_0=0$, so $N$ is anti-selfdual and Einstein, which corresponds to the quaternion-Kähler condition in dimension 4.

3. If $\Hol_0(N)=\U(m)$ and $T=\RM^{2m}$, then the decomposition of $\Lambda^2 T$ in irreducible components reads $\so(2m)\simeq \Lambda^2 T=\u(1)\oplus \su(m)\oplus \mm$, where $\mm\simeq\Lambda^{(2,0)+(0,2)}T$ is isomorphic to the isotropy representation of the symmetric space $\SO(2m)/\U(m)$ (and thus verifies $[\mm,\mm]=\u(1)\oplus\su(m)$). Consequently, the only $\su(m)$-invariant Lie sub-algebras of $\so(2m)$ of dimension larger than 1 are $\su(m)$ and $\u(1)\oplus\su(m)$. Their centralizers in $\so(2m)$ are both equal to $\u(1)$. Geometrically, this means that $N$ is a Kähler manifold such that the endomorphism  $(\nabla^N_XR^N)_{Y,Z}$ is proportional to the complex structure for every tangent vectors $X,Y,Z$. This easily implies that $\nabla^NR^N=0$. Indeed, assume that $(\nabla^N_XR^N)_{Y,Z}=T(X,Y,Z)J$ for some tensor $T$. Using the second Bianchi identity we obtain for every tangent vectors $A,B,C,Y,Z$:
$$0=\mathop{\mathfrak{S}}_{A,B,C}g^N((\nabla^N_AR^N)_{Y,Z}B,C)=\mathop{\mathfrak{S}}_{A,B,C}T(A,Y,Z)g^N(JB,C).$$
Taking $B=JC$ of unit length and orthogonal to $A$ and $JA$ yields $T(A,Y,Z)=0$ for every $A,Y,Z$. Thus $T=0$, so $N$ is locally symmetric.

4. If $\Hol_0(N)=\Sp(q)\cdot\Sp(1)$ and $T=\RM^{4q}$ with $q\ge 2$, then $N$ is quaternion-Kähler. It remains to check that the fibers of $\psi(\gg P)$ are isomorphic to $\sp(1)$. It is well known that the decomposition of $\Lambda^2 T$ in irreducible summands is  $\Lambda^2 T=\sp(q)\oplus\sp(1)\oplus \mm$. We denote by  $\Lambda^2\T N=\sp(q)N\oplus\sp(1)N\oplus \mm N$ the corresponding decomposition of the bundle of 2-forms on $N$. We claim that $[\mm,\mm]$ contains $\sp(q)\oplus\sp(1)$. Indeed, if $[\mm,\mm]$ were orthogonal to some non-zero element in $\sp(q)\oplus\sp(1)$, then this element would commute with each element of $\mm$, and this would contradict the fact that the isotropy representation of the symmetric space $\SO(4q)/\U(2q)$ is faithful.

The centralizer of $\sp(q)\oplus\sp(1)$ in $\so(4q)$ clearly vanishes, so we are left with two possibilities: either $V=\sp(1)$ (in which case we are done), or $V=\sp(q)$. 

We will show that this last case is impossible. Indeed, if $V=\sp(q)$, \eqref{rrr} reads $R^N=\R^\perp-\l R_V$ for some positive constant $\lambda$, where $R_V$ denotes the projection on $VN$ and $R^\perp$ is a symmetric endomorphism of $\sp(1) N$ satisfying the first Bianchi identity. At every point of $N$ one can diagonalize $R^\perp$ in an orthonormal basis $\omega_1$, $\omega_2$, $\omega_3$ of $\sp(1)N$ so that 
$$R^\perp(X\wedge Y)=\tfrac1{2q}\sum_a\l_a g^N(X\wedge Y,\omega_a)\omega_a.$$ 
An easy computation then shows that the Bianchi condition $\sum_{i,j}e_i\wedge e_j\wedge R^\perp(e_i\wedge e_j)=0$ is equivalent to $\sum_a \l_a \omega_a\wedge\omega_a=0$. On the other hand, for $q\ge 2$ the 4-forms $\omega_a^2$ are linearly independent, so $R^\perp=0$, which shows that $R^N$ is parallel. Since $N$ was assumed to be non locally symmetric, this case is impossible, so the proposition is proved.
\end{proof}

By Lemma \ref{pg}, together with Lemmas \ref{le1} and \ref{le2}, we see that every factor $N_\alpha$ of $N$ (including the flat factor $N_0$) inherits a parallel $\gg$-structure $(P_\alpha,\gamma_\alpha,\psi_\alpha)$. We will distinguish two cases:

{\bf Case 1.} Assume first that there exists a factor $N_\alpha$ such that the sub-bundles $E_{\alpha j}$ defined in \eqref{eai} vanish for every $j\in \{l+1,\ldots,l+m\}$. 

We consider the partitions $A = A_1 \sqcup A_2$ and $I = I_1 \sqcup I_2$  of the two index sets $I=\{1,\ldots,l+m\}$ and $A=\{0,\ldots,s\}$ defined by 
$$A_1:=\{\alpha\},\qquad A_2:=A\setminus \{\alpha\},\qquad I_1=\{i\in I\ |\ E_{\alpha i}\ne0\},\qquad I_2=\{i\in I\ |\ E_{\alpha i}=0\}.$$ 
By Lemma \ref{iredg} we have that $E_{\beta i} =0$ for all $\beta \in A_2, i \in I_1$, and by the very definition of $I_2$ we have $E_{\beta i} =0$ for all $\beta \in A_1, i \in I_2$. Moreover $A_1$ is non-empty, so by Lemma \ref{part} we must have $A_2=I_2=\emptyset$. Thus $N=N_\alpha$ is irreducible.
By Proposition \ref{irre}, $N$ is either
a non locally symmetric quaternion-Kähler manifold with positive scalar curvature as in Example \ref{e78} (iii), or a locally symmetric space of compact type $L/H$. In the latter case, Lemma \ref{symg} shows that $\gg$ is an ideal of the Lie algebra $\hh$ of $H$ and the parallel $\gg$-structure on $N$ is the reduction of the canonical parallel $\hh$-structure of $L/H$ to $\gg$.

{\bf Case 2.} For every $\alpha\in A$, there exists $j\in \{l+1,\ldots,l+m\}$ such that $E_{\alpha j}\ne0$. By Lemmas \ref{g1} and \ref{pg}, the reduction of the $\gg$-structure to the element of the center of $\gg$ generated by $\xi_j$ defines a non-vanishing parallel 2-form on $N_\alpha$, so by irreducibility, $N_\alpha$ is Kähler for every $\alpha\in A\setminus \{0\}$. The same holds for $N_0$, except that here the Kähler structure is not unique (one might need to further decompose $N_0$ into a product of flat Kähler factors, but we don't want to insist on this). The important fact is that the reduction of the $\gg$-structure to the center $\z$ of $\gg$ is an Abelian $\gg$-structure on $N$, which can be written as in Example \ref{e78} (ii). 

We now denote by $$A':=\{\alpha\in A\ |\ E_{\alpha i}=0\ \forall i\in\{1,\ldots,l\}\},\qquad A'':=\{\alpha\in A\ |\ \exists  i\in\{1,\ldots,l\}, \ E_{\alpha i}\ne 0\}.$$ 
By Lemma \ref{le1}, $0\in A'$. By Proposition \ref{irre}, for each $\alpha\in A''$, the corresponding factor is locally symmetric, $N_\alpha =L_\alpha/H_\alpha$, so being Kähler, it is in fact Hermitian symmetric. By Lemma \ref{symg}, the reduction of the parallel $\gg$-structure on $N$ to the semi-simple part of $\gg$, followed by restriction to $N_\alpha$ is a reduction of the canonical parallel $\hh_\alpha$-structure of $L_\alpha/H_\alpha$ to a semi-simple factor of $\hh_\alpha$.

Finally, the parallel $\gg$-structure on $N$ is the Whitney product (Lemma \ref{wp}) of its reductions to $\z$ and to the semi-simple part of $\gg$, which is exactly the last case in the theorem.
\end{proof}

Note that Proposition \ref{p75} together with the above classification of Riemannian manifolds carrying non-degenerate parallel $\gg$-structures (Theorem \ref{pgs}), yield the classification of geometries with torsion of special type.

\section{Appendix. Geometries with parallel curvature over quaternion-Kähler manifolds}

In this appendix we will show, by explicit calculations, how the notions of parallel $\gg$-structures and geometries with parallel curvature (in the special case where $\gg=\sp(1)$ and the base is quaternion-Kähler) fit with the examples of geometries with parallel skew-symmetric torsion in the literature, with special emphasis on the so-called $3$-$(\alpha,\delta)$-Sasakian structures. 

Recall that a $3$-$(\alpha,\delta)$-Sasakian structure (\cite[Definition 2.2.1]{ad}) on a $4m+3$-dimensional manifold $M$ is an almost 3-contact metric structure $(g,\f_i,\xi_i,\eta_i)$ (see \cite[Definition 1.2.2]{ad}), such that for every even permutation $\{i,j,k\}$ of $\{1,2,3\}$:
\be\label{3ad} \d\eta_i=2\alpha\Phi^H_i-2\delta\eta_j\wedge\eta_k\ ,
\ee
where $\Phi^H_i:=\Phi_i+\eta_j\wedge\eta_k$ and $\Phi_i(X,Y):=g(X,\f_i(Y))$.

Let now $(N^{4m},g^N)$ be a quaternion-Kähler manifold (see \cite{besse} or \cite{ish}). By definition, there exists a 3-dimensional $\nabla^{g^N}$-parallel sub-bundle $F\subset\End^-(\T N)$ of skew-symmetric endomorphisms locally spanned by almost Hermitian structures $I,J,K$ satisfying the quaternionic relations $IJ=-JI=K$. We choose the metric on $F$ making $(I,J,K)$ orthonormal, and we denote by  $S$ the orthonormal frame bundle of $F$, usually called the Konishi bundle of $N$ \cite{konishi}.

Let $(f_1,f_2,f_3)$ denote the standard basis of $\RM^3$ and let $e_i:=f_j\wedge f_k$ the corresponding basis of $\so(3)$, where here and in the sequel, $\{i,j,k\}$ denotes an arbitrary even permutation of $\{1,2,3\}$. We compute:
\be\label{ef}e_i(f_i)=0,\qquad e_i(f_j)=f_k,\qquad e_i(f_k)=-f_j,\qquad [e_i,e_j]=e_k\ .
\ee

Since $\SO(3)$ is the automorphism group of $\mathbb{H}$, it easily follows that for every $u\in S$, the skew-symmetric endomorphisms $J_i:=uf_i$ are also almost Hermitian structures satisfying the quaternionic relations. 

It was first noticed by Ishihara \cite{ish} that the connection $\beta$ on the principal $\SO(3)$-bundle $\pi:S\to N$ induced by the Levi-Civita covariant derivative of $(N,{g^N})$, has parallel curvature when viewed as section of $\Lambda^2 \T N\otimes \ad(S)$. To make things precise, we choose around each point of $N$ a local section $u=(J_1,J_2,J_3)$ as before, and denote by  $\omega_i:={g^N}(J_i\cdot,\cdot)$ the corresponding locally defined $2$-forms on $N$. The manifold $(N,g^N)$ is Einstein with scalar curvature $\mathrm{Scal}^{N}=4m\kappa$ and Ricci tensor $\Ric^{N}=\kappa {g^N}$ for some non-zero real number $\kappa$, and the curvature tensor of the connection $\beta$ satisfies:
\be\label{9R}R^\beta_{X,Y}J_i=\frac{\kappa}{m+2}(\omega_j(X,Y)J_k-\omega_k(X,Y)J_j)\ ,
\ee
 (see \cite[Eq. (2.13)]{ish} or \cite[Lemma 14.40]{besse}; in the second reference however one should note that the factor 2 appearing there is wrong, and that the  convention for the curvature differs from ours by a sign). 
 
Consider now the local sections $E_i:=u(e_i)$ of the adjoint bundle $\ad(S)=\End^-(F)$. By \eqref{ef}, we have $E_i(J_i)=0$, $E_i(J_j)=J_k$, $E_i(J_k)=-J_j$, and $[E_i,E_j]=E_k$. Using this frame, \eqref{9R} becomes 
\be\label{9R1}R^\beta_{X,Y}=-\frac{\kappa}{m+2}\sum_{\ell=1}^3\omega_\ell(X,Y)E_\ell.
\ee

The scalar product $B$ making the basis $(e_1,e_2,e_3)$ of $\so(3)$ orthonormal is $\ad_{\so(3)}$-invariant, and every $\ad_{\so(3)}$-invariant scalar product on $\so(3)$ is of the form $\la\cdot,\cdot\ra_r:=rB(\cdot,\cdot)$, with $r>0$.
We fix such an $r$ and define the family of metrics $g_r$ on $S$ by 
\be\label{gr}g_r=\pi^*{g^N}+\la\gamma,\gamma\ra_r\ .\ee

\begin{epr}\label{pad} The Riemannian manifold $(S,g_r)$ carries a $3$-$(\alpha,\delta)$-Sasakian structure with $\alpha:=\frac{\kappa\sqrt r}{2(m+2)}$ and $\delta:=\frac1{2\sqrt r}$.
\end{epr}
\begin{proof}
The connection and curvature forms $\beta\in\Omega^1(S,\so(3))$ and $\Omega^\beta\in\Omega^2(S,\so(3))$ can be written with respect to the above basis of $\so(3)$ as $\beta=\sum_{\ell=1}^3\beta_\ell e_\ell$ and $\Omega^\beta=\sum_{\ell=1}^3\Omega^\beta_\ell e_\ell$.
By \eqref{met}, the fundamental vertical vector fields $\xi_\ell:=\frac1{\sqrt r}e_\ell^*$ define an orthonormal basis of the vertical distribution of $S$, and their metric duals $\eta_\ell:=g_r(\xi_\ell,\cdot)$ are given by $\eta_\ell={\sqrt r}\beta_\ell$. 
By \eqref{sts} and \eqref{9R1}, we get $\Omega^\beta_\ell=-\frac{\kappa}{m+2}\pi^*\omega_\ell$. We introduce $\Phi^H_\ell:=-\pi^*\omega_\ell$, $\Phi_i:=\Phi^H_i-\eta_j\wedge\eta_k$ and let $\f_\ell$ denote the endomorphisms of $\T S$ defined by $g_r(U,\f_\ell(V))=\Phi^H_\ell(U,V)$. By \cite[Definition 1.2.2]{ad}, $(g_r,\f_\ell,\xi_\ell,\eta_\ell)_{\ell\in\{1,2,3\}}$ defines an almost 3-contact metric structure on $S$.

From the first structure equation \eqref{str1} we get 
$$\Omega^\beta_i=\d\beta_i+\beta_j\wedge\beta_k\ ,$$
whence
$$\d\eta_i={\sqrt r}\d\beta_i={\sqrt r}(\Omega^\beta_i-\beta_j\wedge\beta_k)={\sqrt r}(\frac{\kappa}{m+2}\Phi^H_i-\frac1r\eta_j\wedge\eta_k)\ .$$

From \cite[Definition 2.2.1]{ad}, $(g_r,\f_\ell,\xi_\ell,\eta_\ell)_{\ell\in\{1,2,3\}}$ is a 3-$(\alpha,\delta)$-Sasakian structure on $S$, with $\alpha:=\frac{\kappa\sqrt r}{2(m+2)}$ and $\delta:=\frac1{2\sqrt r}$. By changing the signs of the vector fields $\xi_\ell$ and of their dual $1$-forms, we also obtain that  $(g_r,\f_\ell,-\xi_\ell,-\eta_\ell)_{\ell\in\{1,2,3\}}$ is a 3-$(\alpha,\delta)$-Sasakian structure, with $\alpha:=-\frac{\kappa\sqrt r}{2(m+2)}$ and $\delta:=-\frac1{2\sqrt r}$. 
\end{proof}

We claim that if $N$ has positive scalar curvature, there exists a unique $r>0$ such that $(g^N,P:=S,\gg:=\so(3),\gamma:=\beta,\langle\cdot,\cdot\rangle_r,\psi)$ is a parallel $\so(3)$-structure on $N$ in the sense of Definition \ref{las1}. 
We need to check that $\psi:\ad(P)\to\Lambda^2\T N$ defined by \eqref{epsi} is a Lie algebra morphism for exactly one value of $r$. 

Since by definition the metric on $\ad(S)$ is induced by the scalar product $\la\cdot,\cdot\ra_r$ on $\so(3)$, we have $\la E_i,E_j\ra=r\delta_{ij}$. Taking  $\sigma:=E_i$ in \eqref{epsi} and using \eqref{9R1}, we get for every tangent vectors $X,Y\in\T N$:
$$g^N(\psi(E_i),X\wedge Y)=-\langle E_i,R^\beta_{X,Y}\rangle=\frac{r\kappa}{m+2}\omega_i(X,Y)=\frac{r\kappa}{m+2}g^N(\omega_i,X\wedge Y)\ ,$$
so $\psi(E_i)=\frac{r\kappa}{m+2}J_i$ as endomorphism of $T N$. Since $[J_i,J_j]=2J_k$, $\psi$ is a Lie algebra morphism if and only if $\frac{r\kappa}{m+2}=\frac12$, i.e. $r=\frac{m+2}{2\kappa}$.

One thus obtains a geometry with parallel skew-symmetric torsion of special type on $S$ given by Proposition \ref{p75}, i.e. by applying Theorem \ref{redu1} in the special case where $\sigma=0$, $P:=S$, $\gg:=\so(3)$, $\gamma:=\beta$, $\k=0$, $\v:=\gg$, $\langle\cdot,\cdot\rangle_\v=rB(\cdot,\cdot)$ for $r=\frac{m+2}{2\kappa}$. Moreover, for this particular value of $r$ one has $\delta=2\alpha=\frac{\sqrt \kappa}{\sqrt{2(m+2)}}$.

This fact, together with Proposition \ref{pad} corroborate the results in \cite{ad}, where the authors construct a connection with parallel skew-symmetric torsion on every 3-$(\alpha,\delta)$-Sasakian manifold, and check that the Sasakian vector fields $\xi_i$ are parallel with respect to this connection if and only if $\delta=2\alpha$. In other words, 3-$(\alpha,\delta)$-Sasakian structures define geometries with parallel skew-symmetric torsion of special type for $\delta=2\alpha$, and this explains that they can be obtained from parallel $\gg$-structures as we just showed. 

For all other values of $\alpha$ and $\delta$ (in particular for $3$-Sasakian manifolds, corresponding to $\alpha=\delta=1$), the Agricola-Dileo connection with parallel skew-symmetric torsion on 3-$(\alpha,\delta)$-Sasakian manifolds is no longer of special type, so it has to fit into the more general setting of geometries with parallel curvature. Let us now explain this in detail.

Consider again a quaternion-Kähler manifold $(N^{4m},{g^N})$ as before, together with the principal $\SO(3)$-bundle $\pi:S\to N$ endowed with the connection $\beta$ induced by the Levi-Civita connection of $(N,g)$. The product $P:=S\times \SO(3)$ has the structure of a principal $\SO(3)\times \SO(3)$-bundle over $N$. 

Let us denote by $\beta_0\in\Omega^1(P,\so(3))$ the pull-back to $P$ through the second projection $P\to\SO(3)$ of the Maurer-Cartan form of $\SO(3)$. Then $\gamma:=(\beta,\beta_0)\in\Omega^1(P,\so(3)\oplus\so(3))$ is a connection form on $P$, with curvature form $\Omega^\gamma=(\Omega^\beta,0)$. We denote by $\gg:=\so(3)\oplus\so(3)$, $\k:=\{(\zeta,\zeta)\ |\ \zeta\in\so(3)\}$, and for $a\in\RM\setminus\{1\}$, $\v_a:=\{(\zeta,a\zeta)\ |\ \zeta\in\so(3)\}$. Then for every $r>0$, the decomposition $\gg=\k\oplus\v_a$ is naturally reductive with respect to the scalar product $\la\cdot,\cdot\ra_r$ on $\v_a$ defined by $\la(\zeta_1,a\zeta_1),(\zeta_2,a\zeta_2)\ra_r:=rB(\zeta_1,\zeta_2)$.

We claim that $(N,g,\sigma:=0,P,\gg,\gamma,\k,\v:=\v_a,\langle\cdot,\cdot\rangle_\v:=\langle\cdot,\cdot\rangle_r)$ is a geometry with parallel curvature (Definition \ref{pgwt}) if and only if $r$ and $a$ are related by $2r\kappa=(1-a)(m+2)$. We only need to check when does condition $(iii)$ in Definition \ref{pgwt}, i.e. \eqref{s2}, hold.

For $\zeta\in\so(3)$, the $\v$-component of $(\zeta,0)$ with respect to the decomposition $\gg=\k\oplus\v$ is 
\be\label{comp}(\zeta,0)_\v=\frac{1}{1-a}(\zeta,a\zeta).
\ee 
Moreover, if $\pmb{u}=(u,h)\in P$ for some $u\in S$ and $h\in \SO(3)$, then for every $X,Y\in\T N$, we have $\pmb{u}^{-1}R^\gamma_{X,Y}=(u^{-1}R^\beta_{X,Y},0)$. Consequently, for every $\pmb{\zeta}:=(\zeta,a\zeta)\in\v$, the endomorphism $R^\gamma_{\pmb{u}\pmb{\zeta}}$ defined in \eqref{og} satisfies:
\bea g^N(R^\gamma_{\pmb{u}\pmb{\zeta}} (X), Y)&=&\la (\pmb{u}^{-1}R^\gamma_{X,Y})_\v, \pmb{\zeta} \ra_\v =\la (u^{-1}R^\beta_{X,Y},0)_\v, (\zeta,a\zeta)\ra_r\\
&=&\frac{1}{1-a}\la (u^{-1}R^\beta_{X,Y},au^{-1}R^\beta_{X,Y}), (\zeta,a\zeta) \ra_r=\frac{r}{1-a}B(u^{-1}R^\beta_{X,Y},\zeta).
\eea
Denoting $uf_i:=J_i$ and $ue_i=:E_i$ as before, \eqref{9R1} yields 
\be\label{urb}u^{-1}R^\beta_{X,Y}=-\frac{\kappa}{m+2}\sum_{\ell=1}^3\omega_\ell(X,Y)e_\ell,
\ee
whence
$$g^N(R^\gamma_{\pmb{u}\pmb{\zeta}} (X), Y)=\frac{r}{1-a}B(u^{-1}R^\beta_{X,Y},\zeta)=-\frac{r\kappa}{(1-a)(m+2)}\sum_{\ell=1}^3B(\zeta,e_\ell)\omega_\ell(X,Y)\ ,$$
thus showing that 
\be\label{rz}R^\gamma_{\pmb{u}\pmb{\zeta}}=\frac{r\kappa}{(1-a)(m+2)}\sum_{\ell=1}^3B(\zeta,e_\ell)J_\ell.\ee

By bilinearity and skew-symmetry, \eqref{s2} holds if and only if
\be\label{39bis}g^N ([R^\gamma_{\pmb{u}\pmb{\zeta}_2},R^\gamma_{\pmb{u}\pmb{\zeta}_1}](X),Y)+\langle[u^{-1}R^\gamma_{X,Y},\pmb{\zeta}_2]_\v,\pmb{\zeta}_1\rangle_r=0\ ,\ee
for  $\pmb{\zeta}_1:=(e_i,ae_i)$ and  $\pmb{\zeta}_2:=(e_j,ae_j)$, such that $\{i,j,k\}$ is an even permutation of $\{1,2,3\}$. 
Using \eqref{rz} we compute:
\bea g^N ([R^\gamma_{\pmb{u}\pmb{\zeta}_2},R^\gamma_{\pmb{u}\pmb{\zeta}_1}](X),Y)&=&\left(\frac{r\kappa}{(1-a)(m+2)}\right)^2g^N ([J_j,J_i](X),Y)\\&=&-2\left(\frac{r\kappa}{(1-a)(m+2)}\right)^2\omega_k(X,Y),
\eea
and by \eqref{comp} and \eqref{urb}:
\bea\langle[u^{-1}R^\gamma_{X,Y},\pmb{\zeta}_2]_\v,\pmb{\zeta}_1\rangle_r&=&-\frac{\kappa}{m+2}\sum_{\ell=1}^3\omega_\ell(X,Y)\langle[(e_\ell,0),(e_j,ae_j)]_\v,\pmb{\zeta}_1\rangle_r\\&=&-\frac{\kappa}{m+2}\sum_{\ell=1}^3\omega_\ell(X,Y)\langle([e_\ell,e_j],0)_\v,(e_i,ae_i)\rangle_r\\
&=&-\frac{\kappa}{(1-a)(m+2)}\sum_{\ell=1}^3\omega_\ell(X,Y)\langle([e_\ell,e_j],a[e_\ell,e_j]),(e_i,ae_i)\rangle_r\\
&=&\frac{r\kappa}{(1-a)(m+2)}\omega_k(X,Y).
\eea
Comparing the last two relations shows that \eqref{39bis} holds if and only if 
\be\label{rka}2r\kappa=(1-a)(m+2).\ee 

By Theorem \ref{redu1}, whenever this equality holds, we obtain a geometry $(g^M,\tau)$ with parallel skew-symmetric torsion on the quotient $M:=P/K$.

We claim that $(M,g^M)$ also carries a 3-$(\alpha,\delta)$-Sasakian structure. Indeed, denoting by $\pi_M:P\to M$ the canonical projection, and by $\iota:S\to P$ the map $u\mapsto (u,1)$, the composition $\f:=\pi_M\circ\iota$ is a diffeomorphism from $S$ to $M$. We now compute the pull-back by $\f$ of the metric $g^M$ on $M$ defined in Theorem \ref{redu1}. By \eqref{eh} we get for every $X,Y\in\T_u S$:

\bea (\f^*g^M)(X,Y)&=&\iota^*((\pi_M)^*g^M)(X,Y)=((\pi_N)^*g^N)((\iota_*X),(\iota_*Y))+\la\gamma(\iota_*X)_\v,\gamma(\iota_*Y)_\v\ra_r\\
&=&g^N(\pi_*X,\pi_*Y)+\la(\beta(X),0)_\v,(\beta(Y),0)_\v\ra_r\\
&=&g^N(\pi_*X,\pi_*Y)+\frac{1}{(1-a)^2}\la(\beta(X),a\beta(X)),(\beta(Y),a\beta(Y))\ra_r\\
&=&g^N(\pi_*X,\pi_*Y)+\frac{r}{(1-a)^2}B(\beta(X),\beta(Y))\ .
\eea

Consequently, $\f^*g^M$ is the metric $g_s$ on $S$ defined in \eqref{gr} for $s:=\frac{r}{(1-a)^2}$. By Proposition \ref{pad}, $(M,g^M)$ carries a 
3-$(\alpha,\delta)$-Sasakian structure on $S$, with $\alpha:=\frac{\kappa\sqrt r}{2(m+2)|1-a|}$ and $\delta:=\frac1{2\sqrt r|1-a|}$. 

When the parameters $r,\kappa$ and $a$ satisfy the constraint \eqref{rka} (i.e. for $1-a=\frac{m+2}{2r\kappa}$), we thus have shown that $(M,g^M)$ has a geometry with parallel skew-symmetric torsion, and a 3-$(\alpha,\delta)$-Sasakian structure, with $\alpha:=\frac{\kappa}{4\sqrt r|\kappa|}$ and $\delta:=\frac{m+2}{4r\sqrt r|\kappa|}$. When $\kappa$ runs through $\RM^*$ and $r$ through $\RM^*_+$, we obtain all possible pairs $(\alpha,\delta)$ in $\RM^*\times \RM^*_+$. Like before, a metric with 3-$(\alpha,\delta)$-Sasakian structure also has a 3-$(-\alpha,-\delta)$-Sasakian structure, so the above construction covers all pairs $(\alpha,\delta)$ in $\RM^*\times \RM^*$. Note that the sign of the scalar curvature of the quaternion-Kähler basis $N$ is always equal to the sign of $\alpha\delta$, see also \cite{ad}.

\bigskip

\labelsep .5cm

\end{document}